\theoremstyle{plain}
\newtheorem{theorem}{Theorem}[section]
\newtheorem{example}{Example}[section]
\newtheorem{lemma}{Lemma}[section]
\newtheorem{proposition}{Proposition}[section]
\title[Complete Lagrangian self-shrinkers  in $\mathbf R^4$]
{Complete Lagrangian self-shrinkers  in $\mathbf R^4$}
\author [Q. -M. Cheng, H.  Hori  and G. Wei]{Qing-Ming Cheng*, Hiroaki Hori and Guoxin Wei*}
\dedicatory{Dedicated to Professor Yuan-Long Xin for his 75th birthday}
\address{Qing-Ming Cheng \\  \newline \indent Department of Applied Mathematics, Faculty of Sciences,
\newline \indent Fukuoka University, Fukuoka  814-0180, Japan.  \newline \indent cheng@fukuoka-u.ac.jp}
\address{Hori Hiroaki \\  \newline \indent Department of Applied Mathematics, Graduate School of Sciences,
\newline \indent Fukuoka University, Fukuoka  814-0180, Japan.  \newline \indent sd150501@cis.fukuoka-u.ac.jp}
\address{Guoxin Wei \\  School of Mathematical Sciences, South China Normal University,
\newline \indent 510631, Guangzhou,  China, weiguoxin@tsinghua.org.cn}
\begin{document}
\maketitle

\begin{abstract}
The purpose of this paper is to study complete  self-shrinkers of mean curvature flow in Euclidean spaces. In the paper,
we give a complete classification
for 2-dimensional complete  Lagrangian self-shrinkers in Euclidean space $\mathbb R^4$  with constant squared norm of the
second fundamental form.

\end{abstract}

\footnotetext{2010 \textit{Mathematics Subject Classification}:
53C44, 53C40.}
\footnotetext{{\it Key words and phrases}: mean curvature flow,
 self-shrinker, Lagrangian submanifold,  the generalized maximum principle.}

\footnotetext{The first author was partially  supported by JSPS Grant-in-Aid for Scientific Research (B):  No.16H03937.
The third  author was partly supported by grant No. 11371150 of NSFC.}

\section{introduction}
\vskip2mm
\noindent
Let  $X: M\to \mathbb{R}^{n+p}$  be an $n$-dimensional submanifold in the ($n+p$)-dimensional
Euclidean space $\mathbb{R}^{n+p}$. A family of $n$-dimensional submanifolds
$X(\cdot, t):M\to  \mathbb{R}^{n+p}$ is called a mean curvature flow if they
satisfy $X(\cdot, 0)=X(\cdot)$ and
\begin{equation}
\dfrac{\partial X(p,t)}{\partial t}=\vec H(p,t), \quad (p,t)\in M\times [0,T),
\end{equation}
where $\vec H(p,t)$ denotes the mean curvature vector  of submanifold  $M_t=X(M,t)$ at point $X(p,t)$.
The mean curvature flow has been used to model various things  in material sciences and physics
such as cell, bubble growth and so on.
The study of the mean curvature  flow from the perspective of partial differential
equations commenced with Huisken's paper  \cite{H1}  on the flow of convex
hypersurfaces.
One of the most important problems in the  mean curvature flow is to understand
the possible singularities that the flow goes through.  A key starting point
for singularity analysis is Huisken's monotonicity formula, the monotonicity
implies that the flow is asymptotically self-similar near a given type I singularity. Thus, it
 is modeled  by  self-shrinking solutions of the flow.

\noindent
An  $n$-dimensional  submanifold  $X: M\rightarrow \mathbb{R}^{n+p}$  in the $(n+p)$-dimensional
Euclidean space $\mathbb{R}^{n+p}$  is called a self-shrinker if it satisfies
\begin{equation*}
\vec H+ X^{\perp}=0,
\end{equation*}
where  $X^{\perp}$ denotes the normal part of the position vector $X$.
It is known that self-shrinkers play an important role in the study of the mean curvature flow because
they describe all possible  blow-ups at a given singularity of the  mean curvature flow.

\noindent
For complete
self-shrinkers with co-dimension $1$,  Abresch and Langer \cite{AL}  classified
closed self-shrinker curves in $\mathbb{R}^2$ and showed that the  round circle is the only embedded self-shrinker.
Huisken \cite{H2, H3},  Colding and Minicozzi \cite{CM1}  have proved that  if $X: M\rightarrow \mathbb{R}^{n+1}$
 is  an $n$-dimensional  complete  embedded self-shrinker
in $\mathbb{R}^{n+1}$ with  mean curvature $H\geq 0$ and  with polynomial volume growth,
then $X: M\rightarrow \mathbb{R}^{n+1}$  is  isometric to  $\mathbb{R}^{n}$,  or the  round sphere $S^{n}(\sqrt{n})$, or a cylinder $S^m (\sqrt{m})\times \mathbb{R}^{n-m}$, $1\leq m\leq n-1$.   Halldorsson in  \cite{H}
proved that there exist complete self-shrinking curves $\Gamma$ in $\mathbb R^2$,
which is contained in an annulus around the origin  and whose image is dense in the  annulus. Furthermore,
Ding and Xin \cite{DX1}, X. Cheng and Zhou \cite{CZ}  proved that
a  complete self-shrinker has  polynomial volume growth if and only if it is proper.
Thus, the  condition on polynomial volume growth in \cite{H3} and \cite{CM1}
is essential since  these complete self-shrinking curves $\Gamma$  of Halldorsson  \cite{H}  are not proper and
for any integer $n>0$,
$\Gamma \times \mathbb R^{n-1}$ is  a complete self-shrinker without polynomial volume growth in $\mathbb R^{n+1}$.

\vskip1mm
\noindent
As for the study on the rigidity of complete self-shrinkers, many important works have been done
(cf. \cite{CL}, \cite{CP}, \cite{CW}, \cite{CO}, \cite{DX1, DX2},  \cite{LW}  and so on).
 In particular, Cheng and Peng in \cite{CP}  proved that for an $n$-dimensional complete self-shrinker
 $X:M^n\rightarrow \mathbb{R}^{n+1} $  with   $\inf H^2>0$,
if  the squared norm $S$ of the second fundamental form is constant, then $M^n$ is  isometric to one of the following:
\begin{enumerate}
\item $S^n(\sqrt{n})$,
\item $S^m(\sqrt m)\times\mathbb{R}^{n-m}\subset \mathbb{R}^{n+1}$.
\end{enumerate}
Furthermore, Ding and Xin \cite{DX2}  studied 2-dimensional complete self-shrinkers with polynomial volume growth
and with constant  squared norm $S$ of the second fundamental form. They have proved
that a $2$-dimensional complete self-shrinker  $X: M\rightarrow \mathbb{R}^{3}$   with polynomial volume growth
is isometric to one of the following:
\begin{enumerate}
\item $\mathbb{R}^{2}$,
\item
 $S^1 (1)\times \mathbb{R}$
\item  $S^{2}(\sqrt{2})$,
\end{enumerate}
if $S$  is constant.
Recently,  Cheng and Ogata \cite{CO}
have removed both the assumption on polynomial volume growth in the above theorem of Ding and Xin \cite{DX2}
and the assumption $\inf H^2>0$ in  the theorem of Cheng and Peng \cite{CP} for $n=2$.

\vskip2mm
\noindent
It is natural to ask  the following problems:

\vskip1mm
\noindent
{\bf Problem 1}. To classify
2-dimensional complete self-shrinkers in $\mathbb R^4$
if  the squared norm $S$ of the second fundamental form is constant.

\vskip2mm
\noindent
It is well-known that the unit sphere $S^2(1)$, the Clifford torus $S^1(1)\times S^1(1)$ , the  Euclidean plane $\mathbb R^2$ and  the cylinder $S^1(1)\times \mathbb{R}^{1}$ are the canonical  self-shrinkers in $\mathbb R^4$.
Besides the standard examples, there are many examples of complete  self-shrinkers in $\mathbb R^4$. For examples, compact minimal
surfaces in the sphere $S^3(2)$ are compact self-shrinkers in $\mathbb R^4$. Further, Anciaux \cite{A}, Lee and Wang \cite{LeW}, Castro and  Lerma \cite{CLe} constructed many compact self-shrinkers in  $\mathbb R^4$ (cf.  Section 3).  Except  the canonical  self-shrinkers in $\mathbb R^4$,
the known examples of complete self-shrinkers in $\mathbb R^4$ do not have the constant squared norm $S$ of the second fundamental form.

\noindent
Since the above problem is very difficult,  one may consider the special case of complete Lagrangian self-shrinkers in   $\mathbb R^4$ first. Here we have  identified $\mathbb R^{2n}$ with $\mathbb C^n$ and let us recall the definition of Lagrangian submanifolds.  A submanifold $X: M\rightarrow\mathbb{R}^{2n}$ is called a Lagrangian submanifold if  $J(T_pM)=T^{\perp}_pM$, for any $p\in M$, where $J$ is the complex structure of $\mathbb{R}^{2n}$,  $T_pM$
and $T_p^{\perp}M$ denote the tangent space and the normal space at $p$.

\vskip2mm
\noindent
It is known that the mean curvature flow preserves the Lagrangian property, which  means that, if the initial submanifold $X: M\to \mathbb{R}^{2n}$ is Lagrangian, then the mean curvature flow $X(\cdot, t):M\to  \mathbb{R}^{2n}$ is also Lagrangian. Lagrangian submanifolds are a class
of important submanifolds in geometry of submanifolds and they also have many  applications in  many other fields of differential geometry. For instance, the existence of special Lagrangian submanifolds in Calabi-Yau manifolds attracts a lot of  attention since it
plays a critical role in the $T$-duality formulation of Mirror symmetry of Strominger-Yau-Zaslow \cite{SYZ}. In particular, recently, the study on complete Lagrangian self-shrinkers of mean curvature flow has attracted much attention. Many
important examples of compact Lagrangian self-shrinkers are constructed (see Section 3 and cf. \cite{A, CLe, LeW}). It was proved by
Smoczyk \cite{S} that there are no Lagrangian self-shrinkers, which are  topological spheres, in $\mathbb{R}^{2n}$. In \cite{CLe1}, Castro
and Lerma gave a classification of  Hamiltonian stationary Lagrangian self-shrinkers in $\mathbb{R}^{4}$ and in \cite{CLe}, they proved
that Clifford torus  $S^1(1)\times S^1(1)$ is the only compact Lagrangian self-shrinker with $S\leq 2$ in  $\mathbb{R}^{4}$ if
the Gaussian curvature does not change sign. Here, it is noticeable that compactness is important  since
 the Gauss-Bonnet theorem is the key in their proof. In fact,
Since $X: M^2\to \mathbb{R}^{4}$  is compact, according to the Gauss-Bonnet theorem, we have
$$
8\pi (1-g)=2\int_MKdA=\int_M(H^2-S)dA=\int_M(2-S)dA.
$$
Hence, $X: M^2\to \mathbb{R}^{4}$ is a torus and $K\equiv 0$, $S\equiv 2$.
Recently,  Li and Wang \cite{LW}  have removed the condition on Gaussian curvature. They proved that
Clifford torus  $S^1(1)\times S^1(1)$ is the only compact Lagrangian self-shrinker with $S\leq 2$ in  $\mathbb{R}^{4}$.
Furthermore, they proved that
Clifford torus  $S^1(1)\times S^1(1)$ is the only compact Lagrangian self-shrinker with constant squared norm $S$
of the second fundamental form in  $\mathbb{R}^{4}$. The Gauss-Bonnet theorem is still the key in their proof.
Since  the Euclidean plane $\mathbb R^2$ and  the cylinder $S^1(1)\times \mathbb{R}^{1}$ are complete and non-compact
Lagrangian self-shrinkers with $S=$ constant in  $\mathbb{R}^{4}$, we may ask the following problem:
\vskip2mm
\noindent
{\bf Problem 2}.
Let $X: M^2\to \mathbb{R}^{4}$ be a
2-dimensional complete  Lagrangian self-shrinker  in $\mathbb R^4$.
If  the squared norm $S$ of the second fundamental form is constant,   is $X: M^2\to \mathbb{R}^{4}$
isometric to one of the following
\begin{enumerate}
\item $\mathbb R^2$,
\item $S^1(1)\times \mathbb{R}^{1}$,
\item $S^1(1)\times S^1(1)$?
\end{enumerate}

\vskip2mm
\noindent
It is our motivation to solve  the above problem.  In this paper, we solve the problem 2. In fact, we prove the following:
\begin{theorem}
 Let $X: M^2\to \mathbb{R}^{4}$ be a
2-dimensional complete  Lagrangian self-shrinker in $\mathbb R^4$.
If  the squared norm $S$ of the second fundamental form is constant, then
  $X: M^2\to \mathbb{R}^{4}$ is
isometric to one of
\begin{enumerate}
\item $\mathbb R^2$,
\item $S^1(1)\times \mathbb{R}^{1}$,
\item $S^1(1)\times S^1(1)$.
\end{enumerate}
\end{theorem}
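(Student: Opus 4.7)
The plan is to combine a Simons-type identity for the drift Laplacian $\mathcal{L} = \Delta - \langle X, \nabla \cdot\rangle$ of the self-shrinker with the generalized (Omori--Yau type) maximum principle that the authors and their collaborators have developed in \cite{CO}, \cite{CP}. This bypasses the Gauss--Bonnet argument of Li--Wang \cite{LW} and Castro--Lerma \cite{CLe}, which fails in the non-compact case and is the whole difficulty of Problem 2. The first observation I would use is that the Lagrangian condition makes the second fundamental form encodable as a totally symmetric $3$-tensor $h_{ijk}=\langle B(e_i,e_j),Je_k\rangle$, so that in dimension two only the four components $a=h_{111}$, $b=h_{112}$, $c=h_{122}$, $d=h_{222}$ appear, and $S=a^2+3b^2+3c^2+d^2$, $|\vec H|^2=(a+c)^2+(b+d)^2$. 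This tight algebraic structure will be indispensable at the end.

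First I would derive the Simons-type identity for a Lagrangian self-shrinker in $\mathbb{R}^4$,
\begin{equation*}
\tfrac12\,\mathcal{L} S \;=\; |\nabla h|^2 + S - P(h),
\end{equation*}
where $P(h)$ is the quartic polynomial in $h_{ijk}$ coming from the Gauss and Ricci equations (computed using the Lagrangian identification of the normal and tangent connections). Since $S$ is constant the left-hand side vanishes, so $|\nabla h|^2 = P(h)-S$ is a bounded, entirely algebraic function of $h$. In particular this already forces $P(h)\ge S$ pointwise.

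Next I would compute $\mathcal{L}f$ for the auxiliary function $f=|\nabla h|^2$ (equivalently, for a suitable combination of the $h_{ijk}^2$'s). A second round of the Ricci/Codazzi identities, combined with $\mathcal{L} h_{ijk} = h_{ijk}$ (a direct consequence of the self-shrinker equation and the symmetry of $h_{ijk}$), yields a formula of the shape $\mathcal{L}f = 2|\nabla^2 h|^2 + 2f - Q(h)$ in which $Q(h)$ is again algebraic and, crucially, $S$ constant lets $Q(h)$ be re-expressed through $f$ and $S$. The generalized maximum principle applied at a point where $f$ nearly attains its supremum/infimum then pinches $f$ to a specific constant, which forces $|\nabla h|\equiv 0$, i.e.\ $h$ is parallel.

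Finally, for a parallel totally symmetric $3$-tensor on a $2$-dimensional self-shrinker, the problem reduces to an algebraic classification of solutions $(a,b,c,d)$ of the coupled system given by $\nabla h=0$ and $\vec H+X^\perp=0$. After diagonalising via a rotation in the tangent frame one checks that either $h\equiv 0$, giving $\mathbb{R}^2$; or exactly one of the $H_\alpha$ vanishes and $S=1$, giving $S^1(1)\times\mathbb{R}^1$; or both are nonzero and $S=2$, giving the Clifford torus $S^1(1)\times S^1(1)$. The main obstacle I anticipate is the middle step: the Simons identity for $\mathcal{L}|\nabla h|^2$ has a genuinely quartic right-hand side whose sign must be controlled under only $S=\mathrm{const}$, and one must verify that the generalized maximum principle is applicable to $|\nabla h|^2$ without any polynomial volume growth assumption. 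This is exactly where the Lagrangian symmetry of $h_{ijk}$ is needed in an essential way.
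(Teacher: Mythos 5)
Your outline has a genuine gap at exactly the step you yourself flag as the main obstacle, and the two identities you propose to get past it are not correct. First, $\mathcal{L}h^{p^{\ast}}_{ijk}=h^{p^{\ast}}_{ijk}$ is false: already $\mathcal{L}h^{p^{\ast}}_{ij}=(3K+1)h^{p^{\ast}}_{ij}-K(H^{p^{\ast}}\delta_{ij}+H^{i^{\ast}}\delta_{pj})-\sum_{k,q}h^{p^{\ast}}_{ik}h^{q^{\ast}}_{jk}H^{q^{\ast}}$, and the formula for $\tfrac12\mathcal{L}\sum(h^{p^{\ast}}_{ijk})^2$ (the paper's Lemma 2.2) contains terms such as $-5K|\nabla^{\perp}\vec H|^2$, $\langle\nabla K,\nabla H^2\rangle$, $K_{,l}h^{p^{\ast}}_{lj}H^{p^{\ast}}_{,j}$ and cubic contractions $h_{ijk}h_{ijl}h_{kl}H$, all of indefinite sign when only $S$ is constant. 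Consequently your claimed identity $\mathcal{L}f=2|\nabla^2h|^2+2f-Q(h)$ with $Q$ re-expressible through $f$ and $S$ does not hold, and the generalized maximum principle applied to $f=|\nabla h|^2$ cannot ``pinch $f$ to a constant'': it only gives $\limsup\mathcal{L}f\le 0$ along a sequence, which, with a right-hand side of indefinite sign, yields nothing like $\nabla h\equiv 0$. Note that $\nabla h\equiv 0$ is never proved directly in the paper and would trivialize the problem; the absence of any such pinching is precisely the difficulty.

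For comparison, the paper proceeds quite differently. Since $S$ is constant the Ricci curvature is bounded below, so the Cheng--Peng generalized maximum principle applies without any volume growth hypothesis (your worry on that point is not the issue); it is applied to $-|X|^2$ and to $H^2$, not to $|\nabla h|^2$. One first proves $S\le 2$ (Theorem 4.1). The range $1<S<2$ is then excluded by a long argument: along a sequence $p_m$ realizing $\sup H^2$ one arranges a limit frame with $\bar h^{1^{\ast}}_{12}=0$ (Lemma 4.1), computes $\lim\mathcal{L}\sum(h^{p^{\ast}}_{ijk})^2$ in two independent ways (Lemma 2.3), bounds the fourth-order term $\sum(h^{p^{\ast}}_{ijkl})^2$ from below via the Ricci identities (Lemma 2.4), and reaches contradictions through explicit polynomial inequalities in $\bar H^2$ and $S$ (Propositions 4.1--4.3). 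Finally, for $S\equiv1$ or $S\equiv2$ one shows $H\neq0$, then $H^2\equiv S$ (Theorem 4.3, Proposition 4.4, Theorem 4.4), and only at that stage identifies the surfaces ($|X|^2\equiv 2$ and minimality in $S^3(\sqrt2)$ for $S=2$; Yau's theorem for $S=1$). Your first step (the formula for $\tfrac12\mathcal{L}S$) and your final algebraic identification are consistent with the paper, but without a correct replacement for the middle step the proposed proof does not go through.
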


\vskip5mm
\section {Preliminaries}
\vskip2mm

\noindent
Let $X: M\rightarrow\mathbb{R}^{2n}$ be an
n-dimensional connected submanifold of the $2n$-dimensional Euclidean space
$\mathbb{R}^{2n}$. We choose a local orthonormal frame field
$\{e_A\}_{A=1}^{2n}$ in $\mathbb{R}^{2n}$ with dual coframe field
$\{\omega_A\}_{A=1}^{2n}$, such that, restricted to $M$,
$e_1,\cdots, e_n$ are tangent to $M^n$. 
 Here we have  identified $\mathbb R^{2n}$ with $\mathbb C^n$.
\noindent
For a  Lagrangian submanifold $X: M\rightarrow\mathbb{R}^{2n}$,  we choose an adapted Lagrangian  frame field
$$
e_1, e_2, \cdots, e_n \ \ \text{\rm and } \ e_{1^*}=Je_1, e_{2^*}=Je_2, \cdots, e_{n^*}=Je_n.
$$
From now on,  we use the following conventions on the ranges of indices:
$$
 1\leq i,j,k,l\leq n, \quad 1\leq
\alpha,\beta,\gamma\leq n
$$
and $\sum_{i}$ means taking  summation from $1$ to $n$ for $i$.
Then we have
\begin{equation*}
dX=\sum_i\limits \omega_i e_i,
\end{equation*}
\begin{equation*}
de_i=\sum_j\limits \omega_{ij}e_j+\sum_{\alpha}\limits
\omega_{i\alpha^{\ast}}e_{\alpha^{\ast}},
\end{equation*}
\begin{equation*}
de_{\alpha^{\ast}}=\sum_i\limits\omega_{\alpha^{\ast} i}e_i+\sum_\beta\limits
\omega_{\alpha^{\ast}\beta^{\ast}}e_{\beta^{\ast}} ,
\end{equation*}
where $\omega_{ij}$ is the Levi-Civita connection of $M$,
$\omega_{\alpha^{\ast}\beta^{\ast}}$ is the normal connection of
$T^{\perp}M$.\newline
By  restricting  these forms to $M$,  we have
\begin{equation}\label{2.1-1}
\omega_{\alpha^{\ast}}=0 \quad \text{for}\quad  1\leq\alpha\leq n
\end{equation}
and the induced Riemannian metric of $M$ is written as
$ds^2_M=\sum_i\limits\omega^2_i$.
Taking exterior derivatives of \eqref{2.1-1}, we have
\begin{equation*}
0=d\omega_{\alpha^{\ast}}=\sum_i \omega_{\alpha^{\ast} i}\wedge\omega_i.
\end{equation*}
By Cartan's lemma, we have
\begin{equation*}\label{2.1-2}
\omega_{i\alpha^{\ast}}=\sum_j h^{\alpha^{\ast}}_{ij}\omega_j,\quad
h^{\alpha^{\ast}}_{ij}=h^{\alpha^{\ast}}_{ji}.
\end{equation*}
Since  $X: M\rightarrow\mathbb{R}^{2n}$ is a Lagrangian submanifold, we have
\begin{equation}\label{2.1-2}
h^{p^{\ast}}_{ij}=h^{p^{\ast}}_{ji}=h^{i^{\ast}}_{pj}, \ \ \text{\rm for any } \  i, j, p.
\end{equation}
$$
h=\sum_{i,j,p}h^{p^{\ast}}_{ij}\omega_i\otimes\omega_j\otimes e_{p^{\ast}}
$$
and
$$
\vec{H}=\sum_{p}\limits H^{p^{\ast}} e_{p^{\ast}}=\sum_{p}\limits \sum_i\limits h^{p^{\ast}}_{ii}e_{p^{\ast}}
$$
are called  the second fundamental form and the mean curvature vector field of $X: M\rightarrow\mathbb{R}^{2n}$, respectively.
Let $S=\sum_{i,j,p}\limits (h^{p^{\ast}}_{ij})^2$ be  the squared norm
of the second fundamental form and $H=|\vec{H}|$ denote the mean
curvature of $X: M\rightarrow\mathbb{R}^{2n}$.
The induced structure equations of $M$ are given by
\begin{equation*}
d\omega_{i}=\sum_j \omega_{ij}\wedge\omega_j, \quad  \omega_{ij}=-\omega_{ji},
\end{equation*}
\begin{equation*}
d\omega_{ij}=\sum_k \omega_{ik}\wedge\omega_{kj}-\frac12\sum_{k,l}
R_{ijkl} \omega_{k}\wedge\omega_{l},
\end{equation*}
where $R_{ijkl}$ denotes components of the curvature tensor of $M$.
Hence,
the Gauss equations are given by
\begin{equation}\label{eq:2.1-3}
R_{ijkl}=\sum_{p}\left(h^{p^{\ast}}_{ik}h^{p^{\ast}}_{jl}-h^{p^{\ast}}_{il}h^{p^{\ast}}_{jk}\right),\ \ R_{ik}=\sum_{p} H^{p^{\ast}}
h^{p^{\ast}}_{ik}-\sum_{p,j}h^{p^{\ast}}_{ij}h^{p^{\ast}}_{jk}.
\end{equation}
Letting $R_{p^{\ast}q^{\ast} ij}$ denote the curvature tensor
of the normal connection $\omega_{p^{\ast}q^{\ast}}$ in the normal bundle
of $X:M\rightarrow \mathbb{R}^{2n}$, then Ricci equations are given
by
\begin{equation}\label{eq:16-5}
R_{p^{\ast}q^{\ast}kl}=\sum_i\left(h^{p^{\ast}}_{ik}h^{q^{\ast}}_{il}-h^{p^{\ast}}_{il}h^{q^{\ast}}_{ik}\right).
\end{equation}
Defining the
covariant derivative of $h^{p^{\ast}}_{ij}$ by
\begin{equation}\label{2.1-6}
\sum_{k}h^{p^{\ast}}_{ijk}\omega_k=dh^{p^{\ast}}_{ij}+\sum_kh^{p^{\ast}}_{ik}\omega_{kj}
+\sum_k h^{p^{\ast}}_{kj}\omega_{ki}+\sum_{q}
h^{q^{\ast}}_{ij}\omega_{p^{\ast}q^{\ast}},
\end{equation}
we obtain the Codazzi equations
\begin{equation}\label{eq:16-5}
h_{ijk}^{p^{\ast}}=h_{ikj}^{p^{\ast}}=h^{i^{\ast}}_{pjk}.
\end{equation}
By taking exterior differentiation of \eqref{2.1-6}, and
defining
\begin{equation}\label{eq:16-3}
\sum_lh^{p^{\ast}}_{ijkl}\omega_l=dh^{p^{\ast}}_{ijk}+\sum_lh^{p^{\ast}}_{ljk}\omega_{li}
+\sum_lh^{p^{\ast}}_{ilk}\omega_{lj}+\sum_l h^{p^{\ast}}_{ijl}\omega_{lk}
+\sum_{q} h^{q^{\ast}}_{ijk}\omega_{{q^{\ast}p^{\ast}}},
\end{equation}
we have the following Ricci identities:
\begin{equation}\label{eq:16-4}
h^{p^{\ast}}_{ijkl}-h^{p^{\ast}}_{ijlk}=\sum_m
h^{p^{\ast}}_{mj}R_{mikl}+\sum_m h^{p^{\ast}}_{im}R_{mjkl}+\sum_{q}
h^{q^{\ast}}_{ij}R_{{q^{\ast}p^{\ast}}kl}.
\end{equation}
Defining
\begin{equation}\label{eq:16-6}
\begin{aligned}
\sum_mh_{ijklm}^{p^{\ast}}\omega_m&=dh_{ijkl}^{p^{\ast}}+\sum_mh_{mjkl}^{p^{\ast}}\omega_{mi}
+\sum_mh_{imkl}^{p^{\ast}}\omega_{mj}+\sum_mh_{ijml}^{p^{\ast}}\omega_{mk}\\
&\ \ +\sum_mh_{ijkm}^{p^{\ast}}\omega_{ml}
+\sum_{m}h_{ijkl}^{m^{\ast}}\omega_{m^{\ast}p^{\ast}}
\end{aligned}
\end{equation}
and taking exterior differentiation of  \eqref{eq:16-3}, we get
\begin{equation}
\begin{aligned}
h_{ijkln}^{p^{\ast}}-h_{ijknl}^{p^{\ast}}&=\sum_{m} h_{mjk}^{p^{\ast}}R_{miln}
+ \sum_{m}h_{imk}^{p^{\ast}}R_{mjln}+ \sum_{m}h_{ijm}^{p^{\ast}}R_{mkln}\\
&\ \ +\sum_{m}h_{ijk}^{m^{\ast}}R_{m^{\ast}p^{\ast}ln}.
\end{aligned}
\end{equation}
For the  mean curvature vector field $\vec{H}=\sum_{p}
H^{p^{\ast}} e_{p^{\ast}}$, we define
\begin{equation}\label{2.1-14}
\sum_i H^{p^{\ast}}_{,i}\omega_i=dH^{p^{\ast}}+\sum_{q}
H^{q^{\ast}}\omega_{q^{\ast}p^{\ast}},
\end{equation}
\begin{equation}\label{2.1-15}
\sum_j H^{p^{\ast}}_{,ij}\omega_j=dH^{p^{\ast}}_{,i}+\sum_j
H^{p^{\ast}}_{,j}\omega_{ji}+\sum_{q}
H^{q^{\ast}}_{,i}\omega_{q^{\ast}p^{\ast}},
\end{equation}
\begin{equation}\label{2.1-15}
|\nabla^{\perp}\vec H|^2=\sum_{i,p }(H^{p^{\ast}}_{,i})^2,\ \ \ \  \Delta^\perp H^{p^{\ast}}=\sum_i H^{p^{\ast}}_{,ii}.
\end{equation}
For a smooth function $f$, the $\mathcal{L}$-operator is defined by
\begin{equation}
\mathcal{L}f=\Delta f-\langle X,\nabla f\rangle,
\end{equation}
where $\Delta$ and $\nabla$ denote the Laplacian and the gradient
operator, respectively.
\vskip2mm
\noindent
Formulas in the following Lemma 2.1 may be found in several papers, for examples, \cite{CL, CP, LW, LWe}.
Since many calculations in their proof are used in this paper, we also provide the proofs for reader's convenience.
\vskip1mm
\noindent
If $X:M^2\rightarrow \mathbb{R}^4$ is a  self-shrinker, then we have
\begin{equation}\label{eq:16-8}
H^{p^{\ast}}=-\langle X,e_{p^{\ast}}\rangle, \ \ p=1, 2.
\end{equation}
From \eqref{eq:16-8}, we can get
\begin{equation}
H^{p^\ast}_{,i}=\nabla_{i}H^{p^{\ast}}=-\nabla_{i}\langle X,e_{p^{\ast}}\rangle
=\sum_{j}h_{ij}^{p^{\ast}}\langle X,e_{j}\rangle.
\end{equation}
Since
\begin{equation}
\nabla_{i}|X|^{2}=2\langle X,e_{i}\rangle,
\end{equation}
we have the following equations from \eqref{eq:16-8}
\begin{equation}
\aligned
\nabla_{j}\nabla_{i}|X|^{2}
=&2\langle e_{i},e_{j}\rangle+2\langle X,X_{ij}\rangle \\
=&2\delta_{ij}+2\langle X,\sum_{p}h_{ij}^{p^{\ast}}e_{p^{\ast}}\rangle\\
=&2\delta_{ij}-2 \sum_{p}h_{ij}^{p^{\ast}}H^{p^{\ast}},
\endaligned
\end{equation}

\begin{equation}\label{eq:16-9}
\aligned
\nabla_{j}\nabla_{i}H^{p^{\ast}}
 =&\nabla_{j}(\sum_{k}h_{ik}^{p^{\ast}}\langle X,e_{k}\rangle)\\
=&\sum_{k}h_{ikj}^{p^{\ast}}\langle X,e_{k}\rangle+h_{ij}^{p^{\ast}}
+\sum_{k}h_{ik}^{p^{\ast}}\sum_{q}h_{jk}^{q^{\ast}}\langle X,e_{q^{\ast}}\rangle\\
=&\sum_{k}h_{ikj}^{p^{\ast}}\langle X,e_{k}\rangle+h_{ij}^{p^{\ast}}-\sum_{k,q}h_{ik}^{p^{\ast}}h_{jk}^{q^{\ast}}H^{q^{\ast}}.
\endaligned
\end{equation}
By a direct calculation, from \eqref{eq:16-8} and \eqref{eq:16-9}, we have
\begin{equation}\label{eq:16-13}
\mathcal{L}H^{p^{\ast}}=\sum_{k}H_{,kk}^{p^{\ast}}-\langle X,\sum_{k}H_{,k}^{p^{\ast}}e_{k}\rangle=H^{p^{\ast}}
-\sum_{i,j,q}h_{ij}^{p^{\ast}}h_{ij}^{q^{\ast}}H^{q^{\ast}}.
\end{equation}
From the definition of the self-shrinker, we get
\begin{equation}
\aligned
\frac{1}{2}\mathcal{L}
|X|^{2}=2-H^{2}-\langle X,\sum_{i}\langle X,e_{i}\rangle e_{i}\rangle=2-H^{2}-|X^{\top}|^{2}=2-|X|^{2}.
\endaligned
\end{equation}
Since $X:M^2\rightarrow \mathbb{R}^4$ is a 2-dimensional Lagrangian self-shrinker,
we know
\begin{equation}\label{eq:16-11}
R_{ijkl}=K(\delta_{ik}\delta_{jl}-\delta_{il}\delta_{jk})=R_{i^{\ast}j^{\ast}kl},
\end{equation}
where $K=\dfrac12(H^2-S)$ is the Gaussian  curvature of $X:M^2\rightarrow \mathbb{R}^4$. \newline
According to  \eqref{eq:2.1-3}, \eqref{eq:16-5}, \eqref{eq:16-4}, \eqref{eq:16-11},  we have
\begin{equation}\label{eq:16-14}
\aligned
\mathcal{L} h_{ij}^{p^{\ast}}
=&\sum_{k}h_{ijkk}^{p^{\ast}}-\langle X,\sum_{k}\nabla_{k}h_{ij}^{p^{\ast}}e_{k}\rangle \\
=&\sum_{k}h_{ikjk}^{p^{\ast}}-\sum_{k}h_{ijk}^{p^{\ast}}\langle X,e_{k}\rangle\\
=&\sum_{m,k}h_{mk}^{p^{\ast}}R_{mijk}+ \sum_{m,k}h_{im}^{p^{\ast}}R_{mkjk}\\
&+ \sum_{q,k}h_{ik}^{q^{\ast}}R_{q^{\ast}p^{\ast}jk}
+\sum_{k}h_{ikkj}^{p^{\ast}}-\sum_{k}h_{ijk}^{p^{\ast}}\langle X,e_{k}\rangle\\
=&K \sum_{m,k}h_{mk}^{p^{\ast}}(\delta_{mj}\delta_{ik}-\delta_{mk}\delta_{ij})
+ K \sum_{m,k}h_{im}^{p^{\ast}}(\delta_{mj}\delta_{kk}-\delta_{mk}\delta_{kj})\\
&+K\sum_{q,k} h_{ik}^{q^{\ast}}(\delta_{qj}\delta_{pk}-\delta_{qk}\delta_{pj})
+H_{,ij}^{p^{\ast}}-\sum_{k}h_{ijk}^{p^{\ast}}\langle X,e_{k}\rangle\\
=&K(h_{ij}^{p^{\ast}}-H^{p^{\ast}}\delta_{ij})+K(2h_{ij}^{p^{\ast}}-h_{ij}^{p^{\ast}})
+K(h_{ij}^{p^{\ast}}-\sum_{k}h_{kk}^{i^{\ast}}\delta_{pj})\\
&+\sum_{k}h_{ijk}^{p^{\ast}}\langle X,e_{k}\rangle
+h_{ij}^{p^{\ast}}-\sum_{q,k}h_{ik}^{p^{\ast}}h_{jk}^{q^{\ast}}H^{q^{\ast}}-\sum_{k}h_{ijk}^{p^{\ast}}\langle X,e_{k}\rangle \\
=&(3K+1)h_{ij}^{p^{\ast}}-K(H^{p^{\ast}}\delta_{ij}+H^{i^{\ast}}\delta_{pj})-\sum_{q,k}h_{ik}^{p^{\ast}}h_{jk}^{q^{\ast}}H^{q^{\ast}}.
\endaligned
\end{equation}
Hence, we get
\begin{equation}
\aligned
\frac{1}{2}\mathcal{L}S
=&\frac{1}{2}\sum_{k}\nabla_{k}\nabla_{k}\sum_{i,j,p}(h_{ij}^{p^{\ast}})^{2}
-\frac{1}{2}\langle X,\sum_{k}\nabla_{k}S e_{k}\rangle \\
=&\sum_{k}\nabla_{k}(\sum_{i,j,p} h_{ijk}^{p^{\ast}}h_{ij}^{p^{\ast}})
-\langle X,\sum_{i,j,k,p}h_{ijk}^{p^{\ast}}h_{ij}^{p^{\ast}}e_{k}\rangle\\
=&\sum_{i,j,p} h_{ij}^{p^{\ast}}
\mathcal{L} h_{ij}^{p^{\ast}}+\sum_{i,j,k,p} (h_{ijk}^{p^{\ast}})^{2} \\
=&\sum_{i,j,k,p} (h_{ijk}^{p^{\ast}})^{2}\\
&+\sum_{i,j,p} h_{ij}^{p^{\ast}}\bigl[(3K+1)h_{ij}^{p^{\ast}}
-K(H^{p^{\ast}}\delta_{ij}+H^{i^{\ast}}\delta_{pj})-\sum_{k,q} h_{ik}^{p^{\ast}}h_{jk}^{q^{\ast}} H^{q^{\ast}}\bigl]\\
=&\sum_{i,j,k,p}  (h_{ijk}^{p^{\ast}})^{2}+(3K+1)S-K(H^{2}+H^{2})
-\sum_{i,j,k,p,q} h_{ik}^{p^{\ast}}h_{ij}^{p^{\ast}}h_{jk}^{q^{\ast}} H^{q^{\ast}}  \\
=&\sum_{i,j,k,p}  (h_{ijk}^{p^{\ast}})^{2}+S(1-\frac{3}{2}S)+\frac{5}{2}H^{2}S-H^{4}
-\sum_{i,j,k,p,q}h_{ik}^{p^{\ast}}h_{ij}^{p^{\ast}}h_{jk}^{q^{\ast}} H^{q^{\ast}}.
\endaligned
\end{equation}
From \eqref{eq:16-5} and \eqref{eq:16-11}, we get
\begin{equation*}
\sum_{p,i} h_{ik}^{p^{\ast}}h_{ji}^{p^{\ast}}-\sum_{p} H^{p^{\ast}}h_{jk}^{p^{\ast}} =K(\delta_{kj}-2\delta_{jk})
\end{equation*}
and
\begin{equation*}
\sum_{p,i} h_{ik}^{p^{\ast}}h_{ji}^{p^{\ast}}=-K\delta_{jk}+\sum_{p}H^{p^{\ast}}h_{jk}^{p^{\ast}}.
\end{equation*}
Since
\begin{equation*}
\sum_{j,k}(K\delta_{jk}-\sum_{p}H^{p^{\ast}}h_{jk}^{p^{\ast}})\sum_{q}h_{jk}^{q^{\ast}} H^{q^{\ast}}=
KH^{2}-\sum_{j,k}\sum_{p}(H^{p^{\ast}}h_{jk}^{p^{\ast}})\sum_{q}(H^{q^{\ast}}h_{jk}^{q^{\ast}}),
\end{equation*}
we obtain from (2.24)
\begin{equation*}
\aligned
\frac{1}{2}\mathcal{L}S
=&\sum_{i,j,k,p} (h_{ijk}^{p^{\ast}})^{2}+S(1-\frac{3}{2}S)+\frac{5}{2}H^{2}S-H^{4}\\
&+\sum_ {j,k}(K\delta_{jk}-\sum_{p}H^{p^{\ast}}h_{jk}^{p^{\ast}})\sum_{q}h_{jk}^{q^{\ast}} H^{q^{\ast}}\\
=&\sum_{i,j,k,p} (h_{ijk}^{p^{\ast}})^{2}+S(1-\frac{3}{2}S)+2H^{2}S-\frac{1}{2}H^{4}
-\sum_{j,k,p,q} H^{p^{\ast}}h_{jk}^{p^{\ast}}H^{q^{\ast}}h_{jk}^{q^{\ast}}.
\endaligned
\end{equation*}
From \eqref{eq:16-13}, we have
\begin{equation*}
\aligned
\frac{1}{2}\mathcal{L}H^{2}=&\frac{1}{2}\mathcal{L}\sum_{p}(H^{p^{\ast}})^{2}
=\sum_{i,p}(\nabla_{i}H^{p^{\ast}})^{2}+\sum_{p}H^{p^{\ast}}\mathcal{L}H^{p^{\ast}}\\
=&|\nabla^{\perp}{\vec H}|^{2}+H^{2}-\sum_{i,j,p,q}H^{p^{\ast}}h_{ij}^{p^{\ast}}H^{q^{\ast}}h_{ij}^{q^{\ast}}.
\endaligned
\end{equation*}
Thus, we conclude the following lemma
\begin{lemma}
Let $X:M^2\rightarrow \mathbb{R}^4$ is a 2-dimensional Lagrangian self-shrinker in $\mathbb R^4$. We have
\begin{equation}\label{eq:18-3}
\aligned
\frac{1}{2}\mathcal{L}S
=&\sum_{i,j,k,p} (h_{ijk}^{p^{\ast}})^{2}+S(1-\frac{3}{2}S)+2H^{2}S-\frac{1}{2}H^{4}
-\sum_{j,k,p,q}H^{p^{\ast}}h_{jk}^{p^{\ast}}H^{q^{\ast}}h_{jk}^{q^{\ast}}.
\endaligned
\end{equation}
\begin{equation}
\aligned
\frac{1}{2}\mathcal{L}H^{2}=&|\nabla^{\perp}{\vec H}|^{2}
+H^{2}-\sum_{i,j,p,q}H^{p^{\ast}}h_{ij}^{p^{\ast}}\cdot H^{q^{\ast}}h_{ij}^{q^{\ast}}.
\endaligned
\end{equation}
\end{lemma}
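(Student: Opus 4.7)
The plan is to apply the drift Laplacian $\mathcal{L}$ to $S=\sum (h^{p^{\ast}}_{ij})^{2}$ and to $H^{2}=\sum (H^{p^{\ast}})^{2}$ through a Simons-type Bochner identity. Three ingredients are required: the pointwise formula for $\mathcal{L}$ acting on each component $H^{p^{\ast}}$ and $h^{p^{\ast}}_{ij}$; the fact that, in dimension two and the Lagrangian setting, both the tangent curvature tensor and the normal curvature tensor reduce to a single scalar $K=\tfrac12(H^{2}-S)$, as already recorded in \eqref{eq:16-11}; and the Lagrangian symmetry $h^{p^{\ast}}_{ij}=h^{i^{\ast}}_{pj}$ from \eqref{2.1-2}, which will be needed to identify several otherwise unrelated contractions.

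For $H^{p^{\ast}}$, I would differentiate the self-shrinker relation $H^{p^{\ast}}=-\langle X,e_{p^{\ast}}\rangle$ twice. The second derivative splits into a $h^{p^{\ast}}_{ijk}\langle X,e_{k}\rangle$ piece, the linear piece $h^{p^{\ast}}_{ij}$, and a cubic correction $-\sum_{k,q}h^{p^{\ast}}_{ik}h^{q^{\ast}}_{jk}H^{q^{\ast}}$. Tracing over $i=j$ and subtracting $\langle X,\nabla H^{p^{\ast}}\rangle$ cancels the first piece and produces $\mathcal{L}H^{p^{\ast}}=H^{p^{\ast}}-\sum h^{p^{\ast}}_{ij}h^{q^{\ast}}_{ij}H^{q^{\ast}}$. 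The identity $\tfrac12\mathcal{L}H^{2}=|\nabla^{\perp}\vec H|^{2}+\sum_{p} H^{p^{\ast}}\mathcal{L}H^{p^{\ast}}$ then delivers the second formula of the lemma at once.

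For $h^{p^{\ast}}_{ij}$ I would run the standard Simons commutation: Codazzi converts $\sum_{k}h^{p^{\ast}}_{ijkk}$ into $\sum_{k}h^{p^{\ast}}_{ikjk}$, the Ricci identity \eqref{eq:16-4} swaps the last two covariant derivatives at the cost of three curvature contractions, and $\sum_{k}h^{p^{\ast}}_{kkij}=H^{p^{\ast}}_{,ij}$ brings back the formula computed in the previous step. Because $R_{mijk}$, $R_{mkjk}$, and $R_{q^{\ast}p^{\ast}jk}$ are all of the form $K(\delta\delta-\delta\delta)$, every contraction becomes elementary, and the Lagrangian symmetry lets one read $\sum_p h^{q^{\ast}}_{pp}\delta_{pj}$ as $H^{i^{\ast}}$. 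The outcome is
\begin{equation*}
\mathcal{L}h^{p^{\ast}}_{ij}=(3K+1)h^{p^{\ast}}_{ij}-K\bigl(H^{p^{\ast}}\delta_{ij}+H^{i^{\ast}}\delta_{pj}\bigr)-\sum_{k,q}h^{p^{\ast}}_{ik}h^{q^{\ast}}_{jk}H^{q^{\ast}}.
\end{equation*}

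The main obstacle, and where the bookkeeping is delicate, is the final assembly of $\tfrac12\mathcal{L}S=\sum (h^{p^{\ast}}_{ijk})^{2}+\sum h^{p^{\ast}}_{ij}\mathcal{L}h^{p^{\ast}}_{ij}$. The easy terms combine into $(3K+1)S-2KH^{2}$, which after substituting $K=\tfrac12(H^{2}-S)$ becomes $S-\tfrac32 S^{2}+\tfrac52 SH^{2}-H^{4}$. The awkward piece is the cubic trace $\sum h^{p^{\ast}}_{ij}h^{p^{\ast}}_{ik}h^{q^{\ast}}_{jk}H^{q^{\ast}}$; to turn it into the manifestly symmetric quartic $\sum H^{p^{\ast}}h^{p^{\ast}}_{jk}H^{q^{\ast}}h^{q^{\ast}}_{jk}$ appearing in the statement, I would use the Lagrangian-plus-$n=2$ identity $\sum_{p,i}h^{p^{\ast}}_{ik}h^{p^{\ast}}_{ji}=-K\delta_{jk}+\sum_{p}H^{p^{\ast}}h^{p^{\ast}}_{jk}$, which itself follows by combining the Ricci equation \eqref{eq:16-5} with the Lagrangian symmetry and the scalar form of $R_{p^{\ast}q^{\ast}kl}$. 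Pairing this with $\sum h^{q^{\ast}}_{jk}H^{q^{\ast}}$ generates the desired quartic term plus a residual $KH^{2}$, and the latter combines with the previously collected $\tfrac52 SH^{2}-H^{4}$ to give precisely the stated $2SH^{2}-\tfrac12 H^{4}$. Once these cancellations are tracked carefully the first displayed equation of the lemma drops out.
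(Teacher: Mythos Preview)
Your proposal is correct and follows essentially the same route as the paper: compute $\mathcal{L}H^{p^{\ast}}$ and $\mathcal{L}h^{p^{\ast}}_{ij}$ separately via the self-shrinker relation and the Simons--Codazzi commutation, assemble with the product rule, and then simplify the cubic trace $\sum h^{p^{\ast}}_{ij}h^{p^{\ast}}_{ik}h^{q^{\ast}}_{jk}H^{q^{\ast}}$ using the identity $\sum_{p,i}h^{p^{\ast}}_{ik}h^{p^{\ast}}_{ji}=-K\delta_{jk}+\sum_{p}H^{p^{\ast}}h^{p^{\ast}}_{jk}$. One small remark: that last identity is most directly read off from the Ricci-tensor part of the Gauss equation \eqref{eq:2.1-3} together with $R_{jk}=K\delta_{jk}$ in dimension two, rather than from the normal Ricci equation, although in the Lagrangian setting the tangent and normal curvatures agree and either route works.
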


\noindent
Next, we  will prove the following lemma,  by making use of a long calculation:
\begin{lemma}
Let $X:M^2\rightarrow \mathbb{R}^4$ is a 2-dimensional Lagrangian self-shrinker in $\mathbb R^4$. Then 
\begin{equation}\label{eq:18-2}
\aligned
&\frac{1}{2}\mathcal{L}\sum_{i, j,k,p}(h_{ijk}^{p^{\ast}})^{2}\\
=&\sum_{i,j,k,l,p}(h_{ijkl}^{p^{\ast}})^{2}+(10K+2)\sum_{i,j,k,p}(h_{ijk}^{p^{\ast}})^{2}
-5K|\nabla^{\perp} {\vec H}|^{2}+3\langle\nabla K,\nabla S\rangle\\
&-\frac{3K}{4}\langle\nabla S,\nabla|X|^{2}\rangle-\langle\nabla K,\nabla H^{2}\rangle
-3\sum_{j,l,p}K_{,l}h_{lj}^{p^{\ast}}H_{,j}^{p^{\ast}}\\
&-2\sum_{i,j,k,l,p,q}h_{ijk}^{p^{\ast}}h_{ijl}^{p^{\ast}}h_{kl}^{q^{\ast}}H^{q^{\ast}}
-\sum_{i,j,k,l,p,q}h_{il}^{p^{\ast}}h_{ijk}^{p^{\ast}}h_{jlk}^{q^{\ast}}H^{q^{\ast}}\\
&-\sum_{i,j,k,l,p,q}h_{il}^{p^{\ast}}h_{ijk}^{p^{\ast}}h_{jl}^{q^{\ast}}H_{,k}^{q^{\ast}}
\endaligned
\end{equation}
holds.
\end{lemma}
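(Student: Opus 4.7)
The plan is to apply the identity $\tfrac{1}{2}\mathcal{L}|T|^{2}=|\nabla T|^{2}+\langle T,\mathcal{L}T\rangle$ to the covariant $3$-tensor $T=(h^{p^{\ast}}_{ijk})$. Since $\Delta h_{ijk}^{p^{\ast}}=\sum_{l}h_{ijkll}^{p^{\ast}}$ and, by the definition of $\mathcal{L}$, the drift term contributes $-\sum_{l}h_{ijkl}^{p^{\ast}}\langle X,e_{l}\rangle$, one obtains
\[
\frac{1}{2}\mathcal{L}\sum_{i,j,k,p}(h_{ijk}^{p^{\ast}})^{2}
=\sum_{i,j,k,l,p}(h_{ijkl}^{p^{\ast}})^{2}+\sum_{i,j,k,p}h_{ijk}^{p^{\ast}}\,\mathcal{L}h_{ijk}^{p^{\ast}},
\]
so the whole task reduces to computing $\mathcal{L}h_{ijk}^{p^{\ast}}$ and pairing it with $h_{ijk}^{p^{\ast}}$.

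To compute $\mathcal{L}h_{ijk}^{p^{\ast}}$, I would differentiate the Simons-type formula \eqref{eq:16-14} obtained in the proof of Lemma~2.1, namely $\mathcal{L}h_{ij}^{p^{\ast}}=(3K+1)h_{ij}^{p^{\ast}}-K(H^{p^{\ast}}\delta_{ij}+H^{i^{\ast}}\delta_{pj})-\sum_{k,q}h_{ik}^{p^{\ast}}h_{jk}^{q^{\ast}}H^{q^{\ast}}$, and take $\nabla_{k}$ of both sides. The operators $\nabla_{k}$ and $\mathcal{L}$ do not commute: the commutator with $\Delta$ produces curvature terms via the Ricci identities \eqref{eq:16-4} and (2.13), while the commutator with the drift $\langle X,\nabla\cdot\rangle$ produces an extra $\nabla_{k}h_{ij}^{p^{\ast}}=h_{ijk}^{p^{\ast}}$ coming from $\nabla_{k}X^{\top}=e_{k}$. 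Substituting $R_{ijkl}=K(\delta_{ik}\delta_{jl}-\delta_{il}\delta_{jk})$ and the analogous formula for $R_{p^{\ast}q^{\ast}ij}$ from \eqref{eq:16-11}, the curvature terms contract to multiples of $K\,h_{ijk}^{p^{\ast}}$, $K\,H_{,j}^{p^{\ast}}$ etc., together with isolated gradient-of-$K$ terms $K_{,l}h_{lj}^{p^{\ast}}$ coming from $\nabla K$ that cannot be absorbed.

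Once $\mathcal{L}h_{ijk}^{p^{\ast}}$ is expressed as an algebraic combination of $h$, $\nabla h$, $H$, $\nabla^{\perp}H$, $K$, $\nabla K$ and $X^{\top}$, I would multiply by $h_{ijk}^{p^{\ast}}$ and sum. At this stage the Codazzi symmetry $h_{ijk}^{p^{\ast}}=h_{ikj}^{p^{\ast}}$, the Lagrangian symmetry \eqref{2.1-2} (which gives the stronger $h_{ijk}^{p^{\ast}}=h_{pjk}^{i^{\ast}}$ through \eqref{eq:16-5}), and the self-shrinker relation $H^{p^{\ast}}_{,i}=\sum_{j}h_{ij}^{p^{\ast}}\langle X,e_{j}\rangle$ are used repeatedly. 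The scalar contractions $\sum h_{ij}^{p^{\ast}}h_{ijk}^{p^{\ast}}=\tfrac12\nabla_{k}S$ and $\sum H^{p^{\ast}}H_{,k}^{p^{\ast}}=\tfrac12\nabla_{k}H^{2}$, together with $\nabla_{k}|X|^{2}=2\langle X,e_{k}\rangle$, are exactly what convert the $X^{\top}$-contractions into the inner products $\langle\nabla K,\nabla S\rangle$, $\langle\nabla K,\nabla H^{2}\rangle$ and $\langle\nabla S,\nabla|X|^{2}\rangle$ appearing in \eqref{eq:18-2}; the cubic and quartic terms in $h$ regroup into the three residual contractions on the last two lines.

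The principal obstacle is purely bookkeeping: the single Ricci commutator generates roughly a dozen curvature terms, many of which look distinct but telescope after using $h_{ij}^{p^{\ast}}=h_{pj}^{i^{\ast}}$ and re-labeling the index carrying the star. The delicate point is isolating the gradient terms; one must carefully distinguish the contributions $K_{,l}h_{lj}^{p^{\ast}}H_{,j}^{p^{\ast}}$ (which survive as an irreducible summand) from those proportional to $\langle\nabla K,\nabla H^{2}\rangle$, since both originate from differentiating the coefficient $K$ of $H^{p^{\ast}}\delta_{ij}+H^{i^{\ast}}\delta_{pj}$ in $\mathcal{L}h_{ij}^{p^{\ast}}$. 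Keeping these separate, and ensuring that the drift commutator supplies the $(10K+2)\sum(h_{ijk}^{p^{\ast}})^{2}$ coefficient correctly (the ``$+2$'' coming from two copies of $\nabla_{k}X^{\top}=e_{k}$, one from the original $\mathcal{L}h_{ij}^{p^{\ast}}$ and one from the present $\mathcal{L}h_{ijk}^{p^{\ast}}$), yields the identity as stated.
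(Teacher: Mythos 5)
Your proposal is correct and follows essentially the same route as the paper: write $\tfrac12\mathcal{L}\sum_{i,j,k,p}(h^{p^{\ast}}_{ijk})^{2}=\sum_{i,j,k,l,p}(h^{p^{\ast}}_{ijkl})^{2}+\sum_{i,j,k,p}h^{p^{\ast}}_{ijk}\mathcal{L}h^{p^{\ast}}_{ijk}$, obtain $\mathcal{L}h^{p^{\ast}}_{ijk}$ by commuting derivatives past the already-established formula for $\mathcal{L}h^{p^{\ast}}_{ij}$ via the Ricci identities and $R_{ijkl}=K(\delta_{ik}\delta_{jl}-\delta_{il}\delta_{jk})=R_{i^{\ast}j^{\ast}kl}$, and then contract with $h^{p^{\ast}}_{ijk}$ using the Codazzi and Lagrangian symmetries together with the self-shrinker relation, exactly as in the paper. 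One minor bookkeeping caveat: the surviving term $-3\sum_{j,l,p}K_{,l}h^{p^{\ast}}_{lj}H^{p^{\ast}}_{,j}$ comes from the divergence-of-curvature (derivative of $R$) commutator terms rather than from differentiating the coefficient $K$ of $H^{p^{\ast}}\delta_{ij}+H^{i^{\ast}}\delta_{pj}$ (which is the source of $-\langle\nabla K,\nabla H^{2}\rangle$), but this does not affect the validity of the argument.
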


\begin{proof}
We have the following equation from the Ricci identities (2.10).
\begin{equation}
\aligned
&\mathcal{L}h_{ijk}^{p^{\ast}}=\sum_{l}h_{ijkll}^{p^{\ast}}-\langle X,\sum_{l}\nabla_{l}h_{ijk}^{p^{\ast}}e_{l}\rangle\\
=&\sum_{l}h_{ijlkl}^{p^{\ast}}+\sum_{l,m}(h_{mj}^{p^{\ast}}R_{mikl}
+h_{im}^{p^{\ast}}R_{mjkl}+h_{ij}^{m^{\ast}}R_{m^{\ast}p^{\ast}kl})_{,l}-\langle X,\sum_{l}\nabla_{l}h_{ijk}^{p^{\ast}}e_{l}\rangle \\
=&\sum_{l}h_{ijllk}^{p^{\ast}}+\sum_{l,m}h_{mjl}^{p^{\ast}}R_{mikl}
+\sum_{l,m}h_{iml}^{p^{\ast}}R_{mjkl}+\sum_{l,m}h_{ijm}^{p^{\ast}}R_{mlkl}\\
&+\sum_{l,m}h_{ijl}^{m^{\ast}}R_{m^{\ast}p^{\ast}kl}
+\sum_{l,m}h_{mjl}^{p^{\ast}}R_{mikl}\\
&+\sum_{l,m}h_{iml}^{p^{\ast}}R_{mjkl}+\sum_{l,m}h_{ijl}^{m^{\ast}}R_{m^{\ast}p^{\ast}kl}
+\sum_{l,m}h_{mj}^{p^{\ast}}R_{mikl,l}\\
&+\sum_{l,m}h_{im}^{p^{\ast}}R_{mjkl,l}
+\sum_{l,m}h_{ij}^{m^{\ast}}R_{m^{\ast}p^{\ast}kl,l}
-\langle X,\sum_{l}\nabla_{l}h_{ijk}^{p^{\ast}}e_{l}\rangle.
\endaligned
\end{equation}
From \eqref{eq:16-14}, we have
\begin{equation}\label{eq:18-1}
\aligned
&\sum_{l}h_{ijllk}^{p^{\ast}}=\bigl[(3K+1)h_{ij}^{p^{\ast}}-K(H^{p^{\ast}}\delta_{ij}+H^{i^{\ast}}\delta_{pj})\\
&-\sum_{l,q}h_{il}^{p^{\ast}}h_{jl}^{q^{\ast}}H^{q^{\ast}}
+\langle X,\sum_{l}h_{ijl}^{p^{\ast}}e_{l}\rangle\bigl]_{,k} \\
=&3K_{,k}h_{ij}^{p^{\ast}}+(3K+1)h_{ijk}^{p^{\ast}}-K_{,k}(H^{p^{\ast}}\delta_{ij}+H^{i^{\ast}}\delta_{pj})
-K(H_{,k}^{p^{\ast}}\delta_{ij}+H_{,k}^{i^{\ast}}\delta_{pj})\\
&-\sum_{l,q}h_{ilk}^{p^{\ast}}h_{jl}^{q^{\ast}}H^{q^{\ast}}
-\sum_{l,q}h_{il}^{p^{\ast}}h_{jlk}^{q^{\ast}}H^{q^{\ast}}-\sum_{l,q}h_{il}^{p^{\ast}}h_{jl}^{q^{\ast}}H_{,k}^{q^{\ast}}\\
&+h_{ijk}^{p^{\ast}}+\langle X,\sum_{l,q}h_{ijl}^{p^{\ast}}h_{lk}^{q^{\ast}}e_{q^{\ast}}\rangle
+\langle X,\sum_{l}h_{ijlk}^{p^{\ast}}e_{l}\rangle.
\endaligned
\end{equation}
From \eqref{eq:16-11}, we obtain
\begin{equation*}
\aligned
&(a)^{p^{\ast}}_{ijk}:=\sum_{l,m}h_{mjl}^{p^{\ast}}R_{mikl}+\sum_{l,m}h_{iml}^{p^{\ast}}R_{mjkl}
+\sum_{l,m}h_{ijm}^{p^{\ast}}R_{mlkl}+\sum_{l,m}h_{ijl}^{m^{\ast}}R_{m^{\ast}p^{\ast}kl}\\
&+\sum_{l,m}h_{mjl}^{p^{\ast}}R_{mikl}+\sum_{l,m}h_{iml}^{p^{\ast}}R_{mjkl}+\sum_{l,m}h_{ijl}^{m^{\ast}}R_{m^{\ast}p^{\ast}kl}\\
=&2K\sum_{l,m}\bigl[h_{mjl}^{p^{\ast}}(\delta_{mk}\delta_{il}-\delta_{ml}\delta_{ik})+h_{iml}^{p^{\ast}}(\delta_{mk}\delta_{jl}-\delta_{ml}\delta_{jk})\\
&+h_{ijl}^{m^{\ast}}(\delta_{mk}\delta_{pl}-\delta_{ml}\delta_{pk})\bigl]+\sum_{l,m}Kh_{ijm}^{p^{\ast}}(\delta_{mk}\delta_{ll}
-\delta_{ml}\delta_{lk})\\
=&2K\bigl[h_{ijk}^{p^{\ast}}-H_{,j}^{p^{\ast}}\delta_{ik}
+h_{ijk}^{p^{\ast}}-H_{,i}^{p^{\ast}}\delta_{jk}+h_{ijk}^{p^{\ast}}-H_{,j}^{i^{\ast}}\delta_{pk}
+h_{ijk}^{p^{\ast}}-\frac{1}{2}h_{ijk}^{p^{\ast}}\bigl]\\
=&2K\bigl[\frac{7}{2}h_{ijk}^{p^{\ast}}-H_{,j}^{p^{\ast}}\delta_{ik}-H_{,i}^{p^{\ast}}\delta_{jk}-H_{,j}^{i^{\ast}}\delta_{pk}\bigl],
\endaligned
\end{equation*}

\begin{equation*}
\aligned
&(b)^{p^{\ast}}_{ijk}:=\sum_{l,m}h_{mj}^{p^{\ast}}R_{mikl,l}+\sum_{l,m}h_{im}^{p^{\ast}}R_{mjkl,l}+\sum_{l,m}h_{ij}^{m^{\ast}}R_{m^{\ast}p^{\ast}kl,l}\\
=&\sum_{l,m}K_{,l}\bigl[h_{mj}^{p^{\ast}}(\delta_{mk}\delta_{il}-\delta_{ml}\delta_{ik})
+h_{im}^{p^{\ast}}(\delta_{mk}\delta_{jl}-\delta_{ml}\delta_{jk})\\
&+
h_{ij}^{m^{\ast}}(\delta_{mk}\delta_{pl}-\delta_{ml}\delta_{pk})\bigl]\\
=&K_{,i}h_{jk}^{p^{\ast}}-\sum_{l}K_{,l}h_{lj}^{p^{\ast}}\delta_{ik}+K_{,j}h_{ik}^{p^{\ast}}-\sum_{l}K_{,l}h_{il}^{p^{\ast}}\delta_{jk}+
K_{,p}h_{ij}^{k^{\ast}}-\sum_{l}K_{,l}h_{ij}^{l^{\ast}}\delta_{pk}
\endaligned
\end{equation*}
and
\begin{equation*}
\aligned
&(c)^{p^{\ast}}_{ijk}:=\sum_{l}\langle X,e_{l}\rangle h_{ijlk}^{p^{\ast}}-\sum_{l}\langle X,e_{l}\rangle h_{ijkl}^{p^{\ast}}\\
=&\sum_{l,m}\langle X,e_{l}\rangle\bigl[h_{mj}^{p^{\ast}}R_{milk}+h_{im}^{p^{\ast}}R_{mjlk}
+h_{ij}^{m^{\ast}}R_{m^{\ast}p^{\ast}lk}\bigl] \\
=&K\sum_{l,m}\langle X,e_{l}\rangle\bigl[h_{mj}^{p^{\ast}}(\delta_{ml}\delta_{ik}
-\delta_{mk}\delta_{il})+h_{im}^{p^{\ast}}(\delta_{ml}\delta_{jk}-\delta_{mk}\delta_{jl})\\
&+h_{ij}^{m^{\ast}}(\delta_{ml}\delta_{pk}-\delta_{mk}\delta_{pl})\bigl]\\
\endaligned
\end{equation*}
\begin{equation*}
\aligned
=&K\sum_{l}\langle X,e_{l}\rangle\bigl[h_{lj}^{p^{\ast}}\delta_{ik}-h_{kj}^{p^{\ast}}\delta_{il}
+h_{il}^{p^{\ast}}\delta_{jk}-h_{ik}^{p^{\ast}}\delta_{jl}
+h_{ij}^{l^{\ast}}\delta_{pk}-h_{ij}^{k^{\ast}}\delta_{pl}\bigl] \\
=&K\bigl[\sum_{l}\langle X,e_{l}\rangle h_{lj}^{p^{\ast}}\delta_{ik}-\langle X,e_{i}\rangle h_{kj}^{p^{\ast}}+\sum_{l}\langle X,e_{l}\rangle h_{il}^{p^{\ast}}\delta_{jk}-\langle X,e_{j}\rangle h_{ik}^{p^{\ast}}\\
&+\sum_{l}\langle X,e_{l}\rangle h_{ij}^{l^{\ast}}\delta_{pk}-\langle X,e_{p}\rangle h_{ij}^{k^{\ast}}\bigl].
\endaligned
\end{equation*}
We conclude
\begin{equation}\aligned
&\sum_{i,j,k,p}h_{ijk}^{p^{\ast}}\cdot((a)^{p^{\ast}}_{ijk}+(b)^{p^{\ast}}_{ijk})\\
=&2K\bigl[\frac{7}{2}\sum_{i,j,k,p}(h_{ijk}^{p^{\ast}})^{2}-\sum_{j,p}H_{,j}^{p^{\ast}}H_{,j}^{p^{\ast}}
-\sum_{i,p}H_{,i}^{p^{\ast}}H_{,i}^{p^{\ast}}-\sum_{j,i}H_{,j}^{i^{\ast}}H_{,j}^{i^{\ast}}\bigl]\\
&+\sum_{i,j,k,p}K_{,i}h_{jk}^{p^{\ast}}h_{ijk}^{p^{\ast}}-\sum_{l,j,p}K_{,l}h_{lj}^{p^{\ast}}H_{,j}^{p^{\ast}}
+\sum_{i,j,k,p}K_{,j}h_{ik}^{p^{\ast}}h_{ijk}^{p^{\ast}}\\
&-\sum_{l,i,p}K_{,l}h_{il}^{p^{\ast}}H_{,i}^{p^{\ast}}
+\sum_{i,j,k,p}K_{,p}h_{ij}^{k^{\ast}}h_{ijk}^{p^{\ast}}-\sum_{l,i,j}K_{,l}h_{ij}^{l^{\ast}}H_{,j}^{i^{\ast}}\\
=&2K\bigl[\frac{7}{2}\sum_{i,j,k,p}(h_{ijk}^{p^{\ast}})^{2}-3|\nabla^{\perp} {\vec H}|^{2}]
+ \frac{3}{2}\langle\nabla K,\nabla S\rangle-3\sum_{l,j,p}K_{,l}h_{lj}^{p^{\ast}}H_{,j}^{p^{\ast}}
\endaligned\end{equation}
and
\begin{equation}\aligned
&\sum_{i,j,k,p}h_{ijk}^{p^{\ast}}\cdot(c)^{p^{\ast}}_{ijk}\\
=&K\bigl[\sum_{l,j,p}\langle X,e_{l}\rangle h_{lj}^{p^{\ast}}H_{,j}^{p^{\ast}}
+\sum_{l,i,p}\langle X,e_{l}\rangle h_{li}^{p^{\ast}}H_{,i}^{p^{\ast}}
+\sum_{l,i,j}\langle X,e_{l}\rangle h_{ij}^{l^{\ast}}H_{,j}^{i^{\ast}}\\
&-\frac{1}{2}\sum_{i}\langle X,e_{i}\rangle\nabla_{i}S
-\frac{1}{2}\sum_{i}\langle X,e_{i}\rangle\nabla_{i}S
-\frac{1}{2}\sum_{i}\langle X,e_{i}\rangle\nabla_{i}S\bigl]\\
=&K\bigl(3\sum_{l,j,p}\langle X,e_{l}\rangle h_{lj}^{p^{\ast}}H_{,j}^{p^{\ast}}
-\frac{3}{4}\langle\nabla S,\nabla\langle X,X\rangle\rangle\bigl)\\
=&3K(|\nabla^{\perp} {\vec H}|^{2}-\frac{1}{4}\langle\nabla S,\nabla|X|^{2}\rangle).
\endaligned\end{equation}
From \eqref{eq:18-1}, we have
\begin{equation}
\aligned
&\sum_{i,j,k,l,p}h_{ijk}^{p^{\ast}}(h_{ijll,k}^{p^{\ast}}-\langle X,h_{ijlk}^{p^{\ast}}e_{l}\rangle)\\
=&3\sum_{i,j,k,p}K_{,k}h_{ij}^{p^{\ast}}h_{ijk}^{p^{\ast}}
+(3K+1)\sum_{i,j,k,p}(h_{ijk}^{p^{\ast}})^{2}-\sum_{k,p}K_{,k}(H^{p^{\ast}}H_{,k}^{p^{\ast}}+H^{p^{\ast}}H_{,k}^{p^{\ast}})\\
&-\sum_{k,p}K(H_{,k}^{p^{\ast}}H_{,k}^{p^{\ast}}+H_{,k}^{p^{\ast}}H_{,k}^{p^{\ast}})
-\sum_{i,j,k,l,p,q}h_{ijk}^{p^{\ast}}h_{ilk}^{p^{\ast}}h_{jl}^{q^{\ast}}H^{q^{\ast}}\\
&-\sum_{i,j,k,l,p,q}h_{il}^{p^{\ast}}h_{ijk}^{p^{\ast}}h_{jlk}^{q^{\ast}}H^{q^{\ast}}
-\sum_{i,j,k,l,p,q}h_{il}^{p^{\ast}}h_{ijk}^{p^{\ast}}h_{jl}^{q^{\ast}}H_{,k}^{q^{\ast}}\\
\endaligned
\end{equation}
\begin{equation*}
\aligned
&+\sum_{i,j,k,p}(h_{ijk}^{p^{\ast}})^{2}
-\sum_{i,j,k,l,p,q}h_{ijk}^{p^{\ast}}h_{ijl}^{p^{\ast}}h_{lk}^{q^{\ast}}H^{q^{\ast}}\\
=&\frac{3}{2}\langle\nabla K,\nabla S\rangle+(3K+2)\sum_{i,j,k,p}(h_{ijk}^{p^{\ast}})^{2}
-\langle\nabla K,\nabla H^{2}\rangle-
2K|\nabla^{\perp} {\vec H}|^{2}\\
&-2\sum_{i,j,k,l,p,q}h_{ijk}^{p^{\ast}}h_{ijl}^{p^{\ast}}h_{kl}^{q^{\ast}}H^{q^{\ast}}
-\sum_{i,j,k,l,p,q}h_{il}^{p^{\ast}}h_{ijk}^{p^{\ast}}h_{jlk}^{q^{\ast}}H^{q^{\ast}}
-\sum_{i,j,k,l,p,q}h_{il}^{p^{\ast}}h_{ijk}^{p^{\ast}}h_{jl}^{q^{\ast}}H_{,k}^{q^{\ast}}.
\endaligned
\end{equation*}
From the above equations, we get
\begin{equation}\label{eq:18-2}
\aligned
&\frac{1}{2}\mathcal{L}\sum_{i,j,k,p}(h_{ijk}^{p^{\ast}})^{2}
=\sum_{i,j,k,p}h_{ijk}^{p^{\ast}}\mathcal{L}h_{ijk}^{p^{\ast}}+\sum_{i,j,k,l,p}(h_{ijkl}^{p^{\ast}})^{2}\\
=&\sum_{i,j,k,l,p}(h_{ijkl}^{p^{\ast}})^{2}+2K[\frac{7}{2}\sum_{i,j,k,p}(h_{ijk}^{p^{\ast}})^{2}
-3|\nabla^{\perp} {\vec H}|^{2}]+\frac{3}{2}\langle\nabla K,\nabla S\rangle\\
&-3\sum_{l,j,p}K_{,l}h_{lj}^{p^{\ast}}H_{,j}^{p^{\ast}}
+3K\bigl[|\nabla^{\perp} {\vec H}|^{2}-\frac{1}{4}\langle\nabla S,\nabla|X|^{2}\rangle\bigl]\\
&+\frac{3}{2}\langle\nabla K,\nabla S\rangle
+(3K+2)\sum_{i,j,k,p}(h_{ijk}^{p^{\ast}})^{2}-\langle\nabla K,\nabla H^{2}\rangle-2K|\nabla^{\perp} {\vec H}|^{2}\\
&-2\sum_{i,j,k,l,p,q}h_{ijk}^{p^{\ast}}h_{ijl}^{p^{\ast}}h_{kl}^{q^{\ast}}H^{q^{\ast}}
-\sum_{i,j,k,l,p,q}h_{il}^{p^{\ast}}h_{ijk}^{p^{\ast}}h_{jlk}^{q^{\ast}}H^{q^{\ast}}
-\sum_{i,j,k,l,p,q}h_{il}^{p^{\ast}}h_{ijk}^{p^{\ast}}h_{jl}^{q^{\ast}}H_{,k}^{q^{\ast}}\\
=&\sum_{i,j,k,l,p}(h_{ijkl}^{p^{\ast}})^{2}+(10K+2)\sum_{i,j,k,p}(h_{ijk}^{p^{\ast}})^{2}
-5K|\nabla^{\perp} {\vec H}|^{2}\\
&+3\langle\nabla K,\nabla S\rangle-\frac{3K}{4}\langle\nabla S,\nabla|X|^{2}\rangle
-\langle\nabla K,\nabla H^{2}\rangle\\
&-3\sum_{l,j,p}K_{,l}h_{lj}^{p^{\ast}}H_{,j}^{p^{\ast}}
-2\sum_{i,j,k,l,p,q}h_{ijk}^{p^{\ast}}h_{ijl}^{p^{\ast}}h_{kl}^{q^{\ast}}H^{q^{\ast}}\\
&-\sum_{i,j,k,l,p,q}h_{il}^{p^{\ast}}h_{ijk}^{p^{\ast}}h_{jlk}^{q^{\ast}}H^{q^{\ast}}
-\sum_{i,j,k,l,p,q}h_{il}^{p^{\ast}}h_{ijk}^{p^{\ast}}h_{jl}^{q^{\ast}}H_{,k}^{q^{\ast}}.
\endaligned
\end{equation}
It completes the proof of the lemma.
\end{proof}
\begin{lemma}
Let $X:M^2\rightarrow \mathbb{R}^4$ be  a 2-dimensional Lagrangian self-shrinker in $\mathbb R^4$. If $S$ is constant,   we have
\begin{equation}
\aligned
&\frac{1}{2}\mathcal{L}\sum_{i,j,k,p}(h_{ijk}^{p^{\ast}})^{2}\\
=&\sum_{i,j,k,l,p}(h_{ijkl}^{p^{\ast}})^{2}+(10K+2)\sum_{i,j,k,p}(h_{ijk}^{p^{\ast}})^{2}-5K|\nabla^{\perp} {\vec H}|^{2}
\\
&-\langle\nabla K,\nabla H^{2}\rangle
-3\sum_{l,j,p}K_{,l}h_{lj}^{p^{\ast}}H_{,j}^{p^{\ast}}
-2\sum_{i,j,k,l,p,q}h_{ijk}^{p^{\ast}}h_{ijl}^{p^{\ast}}h_{kl}^{q^{\ast}}H^{q^{\ast}}\\
&-\sum_{i,j,k,l,p,q}h_{il}^{p^{\ast}}h_{ijk}^{p^{\ast}}h_{jlk}^{q^{\ast}}H^{q^{\ast}}
-\sum_{i,j,k,l,p,q}h_{il}^{p^{\ast}}h_{ijk}^{p^{\ast}}h_{jl}^{q^{\ast}}H_{,k}^{q^{\ast}}
\endaligned
\end{equation}
and
\begin{equation}
\aligned
&\frac{1}{2}\mathcal{L}\sum_{i,j,k,p}(h_{ijk}^{p^{\ast}})^{2}\\
=&(H^{2}-2S)(|\nabla^{\perp} {\vec H}|^{2}+H^{2})+\dfrac12|\nabla H^2|^{2}\\
&+(3K+2-H^{2}+2S)\sum_{i,j,p,q}H^{p^{\ast}}h_{ij}^{p^{\ast}} H^{q^{\ast}}h_{ij}^{q^{\ast}}\\
&-K(H^{4}+\sum_{j,k,p}H^{k^{\ast}}H^{j^{\ast}}H^{p^{\ast}}h_{jk}^{p^{\ast}})
-\sum_{i,j,k,l,p,q,r}H^{r^{\ast}}H^{q^{\ast}}h_{jk}^{r^{\ast}}h_{jk}^{p^{\ast}}h_{il}^{p^{\ast}}h_{il}^{q^{\ast}}\\
&+2\sum_{i,j,k,p,q}H_{,i}^{p^{\ast}}H^{q^{\ast}}h_{jk}^{q^{\ast}}h_{ijk}^{p^{\ast}}
-\sum_{i,j,k,p,q,r}H^{p^{\ast}}H^{q^{\ast}}H^{r^{\ast}}h_{ik}^{p^{\ast}}h_{ji}^{q^{\ast}}h_{jk}^{r^{\ast}}\\
&+\sum_{i,j,k}\bigl(\sum_{q}(H_{,i}^{q^{\ast}}h_{jk}^{q^{\ast}}+ H^{q^{\ast}}h_{ijk}^{q^{\ast}})\bigl)\cdot\bigl(\sum_{p}(H_{,i}^{p^{\ast}}h_{jk}^{p^{\ast}}+H^{p^{\ast}}h_{ijk}^{p^{\ast}})\bigl).
\endaligned
\end{equation}

\end{lemma}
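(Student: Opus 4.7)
The first identity is immediate from Lemma 2.2: since $S$ is constant, $\nabla S\equiv 0$, so the two terms $3\langle\nabla K,\nabla S\rangle$ and $-\tfrac{3K}{4}\langle\nabla S,\nabla|X|^{2}\rangle$ in the formula of Lemma 2.2 vanish, and what remains is exactly the first identity.

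For the second identity, rather than trying to eliminate the fourth-derivative term $\sum(h^{p^{\ast}}_{ijkl})^{2}$ from the first identity, my plan is a direct computation that bypasses it altogether. The key observation is that $S$ constant forces $\mathcal{L}S\equiv 0$, so the first formula of Lemma 2.1 rearranges to
$$
\sum_{i,j,k,p}(h^{p^{\ast}}_{ijk})^{2} \;=\; -S+\tfrac{3}{2}S^{2}-2H^{2}S+\tfrac{1}{2}H^{4}+|\vec H\cdot A|^{2},
$$
where I abbreviate $|\vec H\cdot A|^{2}:=\sum_{j,k,p,q}H^{p^{\ast}}h^{p^{\ast}}_{jk}H^{q^{\ast}}h^{q^{\ast}}_{jk}$. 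I would then apply $\tfrac12\mathcal{L}$ to both sides. The constant terms drop; for the $\tfrac12 H^{4}$ piece I use the Leibniz rule $\mathcal{L}(f^{2})=2f\mathcal{L}f+2|\nabla f|^{2}$ with $f=H^{2}$, which is precisely what produces the $\tfrac12|\nabla H^{2}|^{2}$ term; and combining with the $-2H^{2}S$ piece and the second formula of Lemma 2.1 for $\tfrac12\mathcal{L}H^{2}$ yields the partial sum
$$
(H^{2}-2S)(|\nabla^{\perp}\vec H|^{2}+H^{2})+(2S-H^{2})|\vec H\cdot A|^{2}+\tfrac12|\nabla H^{2}|^{2}+\tfrac12\mathcal{L}|\vec H\cdot A|^{2}.
$$

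The bulk of the work is then the evaluation of $\tfrac12\mathcal{L}|\vec H\cdot A|^{2}$. Setting $B_{jk}:=\sum_{p}H^{p^{\ast}}h^{p^{\ast}}_{jk}$ and writing $\tfrac12\mathcal{L}(B_{jk}^{2})=B_{jk}\mathcal{L}B_{jk}+|\nabla B_{jk}|^{2}$, the squared-gradient sum is already precisely the final line of the stated identity. To compute $\mathcal{L}B_{jk}$ I apply the Leibniz rule once more,
$$
\mathcal{L}B_{jk}=\sum_{p}\bigl(h^{p^{\ast}}_{jk}\mathcal{L}H^{p^{\ast}}+H^{p^{\ast}}\mathcal{L}h^{p^{\ast}}_{jk}\bigr)+2\sum_{i,p}H^{p^{\ast}}_{,i}h^{p^{\ast}}_{ijk},
$$
and substitute the formulas already derived in the text for $\mathcal{L}H^{p^{\ast}}$ and $\mathcal{L}h^{p^{\ast}}_{jk}$. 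The two linear-in-$B_{jk}$ pieces combine to $(3K+2)B_{jk}$; after multiplying by $B_{jk}$, summing, and adding the $(2S-H^{2})|\vec H\cdot A|^{2}$ contribution from the previous step, one obtains exactly the coefficient $(3K+2-H^{2}+2S)$ in front of $|\vec H\cdot A|^{2}$. The remaining pieces read off, after relabeling and occasional use of the Lagrangian symmetry $h^{p^{\ast}}_{ij}=h^{i^{\ast}}_{pj}$, as the terms $-K\bigl(H^{4}+\sum H^{k^{\ast}}H^{j^{\ast}}H^{p^{\ast}}h^{p^{\ast}}_{jk}\bigr)$, the two quartic contractions, and the cross term $2\sum H^{p^{\ast}}_{,i}H^{q^{\ast}}h^{q^{\ast}}_{jk}h^{p^{\ast}}_{ijk}$.

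The hard part is entirely combinatorial: keeping track of the several cubic and quartic contractions of $H^{p^{\ast}}$ with $h^{p^{\ast}}_{ij}$ produced by $\mathcal{L}B_{jk}$, and verifying — after relabeling indices and invoking the Lagrangian symmetry where needed — that they match the precise index patterns in the statement. No new geometric input beyond the formulas of Lemma 2.1 and the expressions for $\mathcal{L}H^{p^{\ast}}$ and $\mathcal{L}h^{p^{\ast}}_{jk}$ already established in the preliminaries is required.
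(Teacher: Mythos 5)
Your proposal is correct and follows essentially the same route as the paper: the first identity is obtained by dropping the $\nabla S$ terms in Lemma 2.2, and the second by using $\mathcal{L}S=0$ in Lemma 2.1 to express $\sum_{i,j,k,p}(h^{p^{\ast}}_{ijk})^{2}$ in terms of $H^{2}$, $H^{4}$ and $\sum_{j,k,p,q}H^{p^{\ast}}h^{p^{\ast}}_{jk}H^{q^{\ast}}h^{q^{\ast}}_{jk}$, then applying $\tfrac12\mathcal{L}$ with the Leibniz rule and the established formulas for $\mathcal{L}H^{p^{\ast}}$ and $\mathcal{L}h^{p^{\ast}}_{jk}$. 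Your intermediate reductions (the coefficient $(3K+2)$ for the linear terms, the combination into $(3K+2-H^{2}+2S)$, and the identification of $\sum_{i,j,k}|\nabla_{i}B_{jk}|^{2}$ with the last line) all match the paper's computation.
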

\begin{proof}
Since $S$ is constant,   we have the following equation from \eqref{eq:18-2}
\begin{equation*}
\aligned
&\frac{1}{2}\mathcal{L}\sum_{i,j,k,p}(h_{ijk}^{p^{\ast}})^{2}\\
=&\sum_{i,j,k,l,p}(h_{ijkl}^{p^{\ast}})^{2}+(10K+2)\sum_{i,j,k,p}(h_{ijk}^{p^{\ast}})^{2}
-5K|\nabla^{\perp} {\vec H}|^{2}-\langle\nabla K,\nabla H^{2}\rangle\\
&-3\sum_{j,l,p}K_{,l}h_{lj}^{p^{\ast}}H_{,j}^{p^{\ast}}-2\sum_{i,j,k,l,p,q}h_{ijk}^{p^{\ast}}h_{ijl}^{p^{\ast}}h_{kl}^{q^{\ast}}H^{q^{\ast}}\\
&-\sum_{i,j,k,l,p,q}h_{il}^{p^{\ast}}h_{ijk}^{p^{\ast}}h_{jlk}^{q^{\ast}}H^{q^{\ast}}
-\sum_{i,j,k,l,p,q}h_{il}^{p^{\ast}}h_{ijk}^{p^{\ast}}h_{jl}^{q^{\ast}}H_{,k}^{q^{\ast}}.
\endaligned
\end{equation*}
Now, we prove the formula (2.35).
From \eqref{eq:18-3} in Lemma 2.1, we obtain
\begin{equation}
0=\frac{1}{2}\mathcal{L}S
=\sum_{i,j,k,p} (h_{ijk}^{p^{\ast}})^{2}+S(1-\frac{3}{2}S)+2H^{2}S-\frac{1}{2}H^{4}
-\sum_{j,k,p,q} H^{p^{\ast}}h_{jk}^{p^{\ast}}H^{q^{\ast}}h_{jk}^{q^{\ast}}.
\end{equation}
Hence, we have
\begin{equation}
\aligned
&\frac{1}{2}\mathcal{L}\sum_{i,j,k,p}(h_{ijk}^{p^{\ast}})^{2}
=-S\mathcal{L}H^{2}+\frac{1}{4}\mathcal{L}H^{4}
+\frac{1}{2}\mathcal{L}\sum_{j,k,p,q}H^{p^{\ast}}h_{jk}^{p^{\ast}}H^{q^{\ast}}h_{jk}^{q^{\ast}}\\
=&-2S\Bigg(|\nabla^{\perp} {\vec H}|^{2}+H^{2}-\sum_{i,j,p,q}H^{p^{\ast}}h_{ij}^{p^{\ast}} H^{q^{\ast}}h_{ij}^{q^{\ast}}\Bigg)
+\frac{1}{2}H^{2}\mathcal{L}H^{2}\\
&+\frac{1}{2}|\nabla H^{2}|^{2}+\sum_{j,k}(\sum_{q} H^{q^{\ast}}h_{jk}^{q^{\ast}})\mathcal{L}(\sum_{p} H^{p^{\ast}}h_{jk}^{p^{\ast}})\\
&+\sum_{i,j,k}\nabla_{i}(\sum_{q} H^{q^{\ast}}h_{jk}^{q^{\ast}})\cdot \nabla_{i}(\sum_{p} H^{p^{\ast}}h_{jk}^{p^{\ast}})\\
\endaligned
\end{equation}
\begin{equation*}
\aligned
=&(H^{2}-2S)\Bigg(|\nabla^{\perp} {\vec H}|^{2}+H^{2}-\sum_{i,j,p,q} H^{p^{\ast}}h_{ij}^{p^{\ast}} H^{q^{\ast}}h_{ij}^{q^{\ast}}\Bigg)\\
&+\dfrac12|\nabla H^2|^{2}+\sum_{j,k}(\sum_{q} H^{q^{\ast}}h_{jk}^{q^{\ast}})\mathcal{L}(\sum_{p} H^{p^{\ast}}h_{jk}^{p^{\ast}})\\
&+\sum_{i,j,k,p,q}(H_{,i}^{q^{\ast}}h_{jk}^{q^{\ast}}
+ H^{q^{\ast}}h_{ijk}^{q^{\ast}})\cdot(H_{,i}^{p^{\ast}}h_{jk}^{p^{\ast}}+H^{p^{\ast}}h_{ijk}^{p^{\ast}}).
\endaligned
\end{equation*}
Since
\begin{equation}
\aligned
&\mathcal{L}\sum_{p}(H^{p^{\ast}}h_{jk}^{p^{\ast}})\\
=&\sum_{i,p}\nabla_{i}\nabla_{i}(H^{p^{\ast}}h_{jk}^{p^{\ast}})
-\langle X,\sum_{i,p}\nabla_{i}(H^{p^{\ast}}h_{jk}^{p^{\ast}})e_{i}\rangle\\
=&\sum_{i,p}\nabla_{i}(H_{,i}^{p^{\ast}}h_{jk}^{p^{\ast}}
+H^{p^{\ast}}h_{ijk}^{p^{\ast}})
-\langle X,\sum_{i,p}(\nabla_{i}H^{p^{\ast}}h_{jk}^{p^{\ast}}+H^{p^{\ast}}\nabla_{i}h_{jk}^{p^{\ast}})e_{i}\rangle\\
=&\sum_{p}h_{jk}^{p^{\ast}}\mathcal{L}H^{p^{\ast}}+\sum_{p}H^{p^{\ast}}\mathcal{L}h_{jk}^{p^{\ast}}
+2\sum_{i,p}H_{,i}^{p^{\ast}}h_{ijk}^{p^{\ast}}
\endaligned
\end{equation}
and from (2.20) and (2.23),
\begin{equation}
\sum_{p}h_{jk}^{p^{\ast}}\mathcal{L}H^{p^{\ast}}=\sum_{p}h_{jk}^{p^{\ast}}H^{p^{\ast}}-\sum_{i,l,p,q}h_{jk}^{p^{\ast}}h_{il}^{p^{\ast}}h_{il}^{q^{\ast}}H^{q^{\ast}},
\end{equation}
\begin{equation}
\aligned
&\sum_{p}H^{p^{\ast}}\mathcal{L} h_{jk}^{p^{\ast}}\\
=&(3K+1)\sum_{p}H^{p^{\ast}}h_{jk}^{p^{\ast}}-K\sum_{p}((H^{p^{\ast}})^{2}\delta_{kj}+H^{p^{\ast}}H^{k^{\ast}}\delta_{pj})\\
&-\sum_{i,p,q}h_{ik}^{p^{\ast}}H^{p^{\ast}}h_{ji}^{q^{\ast}}H^{q^{\ast}}\\
=&(3K+1)\sum_{p}H^{p^{\ast}}h_{jk}^{p^{\ast}}-K(H^{2}\delta_{kj}+H^{k^{\ast}}H^{j^{\ast}})
-\sum_{i,p,q}h_{ik}^{p^{\ast}}H^{p^{\ast}}h_{ji}^{q^{\ast}}H^{q^{\ast}},
\endaligned
\end{equation}
we get
\begin{equation}
\aligned
&\mathcal{L}\sum_{p}(H^{p^{\ast}}h_{jk}^{p^{\ast}})\\
=&\sum_{p}h_{jk}^{p^{\ast}}H^{p^{\ast}}-\sum_{i,l,p,q}h_{jk}^{p^{\ast}}h_{il}^{p^{\ast}}h_{il}^{q^{\ast}}H^{q^{\ast}}
+2\sum_{i,p}H_{,i}^{p^{\ast}}h_{ijk}^{p^{\ast}}\\
&+(3K+1)\sum_{p}H^{p^{\ast}}h_{jk}^{p^{\ast}}-K(H^{2}\delta_{kj}+H^{k^{\ast}}H^{j^{\ast}})
-\sum_{i,p,q}h_{ik}^{p^{\ast}}H^{p^{\ast}}h_{ji}^{q^{\ast}}H^{q^{\ast}}
\endaligned
\end{equation}
and
\begin{equation}
\aligned
&\sum_{j,k}(\sum_{q} H^{q^{\ast}}h_{jk}^{q^{\ast}})\mathcal{L}(\sum_{p} H^{p^{\ast}}h_{jk}^{p^{\ast}})\\
=&-\sum_{i,j,k,l,p,q,r}H^{r^{\ast}}h_{jk}^{r^{\ast}}h_{jk}^{p^{\ast}}h_{il}^{p^{\ast}}h_{il}^{q^{\ast}}H^{q^{\ast}}
+2\sum_{i,j,k,p,q}H_{,i}^{p^{\ast}}h_{ijk}^{p^{\ast}}H^{q^{\ast}}h_{jk}^{q^{\ast}}\\
&+(3K+2)\sum_{j,k}\bigl(\sum_q H^{q^{\ast}}h_{jk}^{q^{\ast}}\bigl)\bigl(\sum_p H^{p^{\ast}}h_{jk}^{p^{\ast}}\bigl)\\
&-K(H^{4}+\sum_{j,k,p}H^{k^{\ast}}H^{j^{\ast}}H^{p^{\ast}}h_{jk}^{p^{\ast}})
-\sum_{i,j,k,p,q,r}H^{p^{\ast}}H^{q^{\ast}}H^{r^{\ast}}h_{ik}^{p^{\ast}}h_{ji}^{q^{\ast}}h_{jk}^{r^{\ast}}.
\endaligned
\end{equation}
From the above equations, we conclude
\begin{equation*}
\aligned
&\frac{1}{2}\mathcal{L}\sum_{i,j,k,p}(h_{ijk}^{p^{\ast}})^{2}\\
=&(H^{2}-2S)(|\nabla^{\perp} {\vec H}|^{2}+H^{2})+\dfrac12|\nabla H^2|^{2}\\
&+(3K+2-H^{2}+2S)\sum_{i,j,p,q}H^{p^{\ast}}h_{ij}^{p^{\ast}} H^{q^{\ast}}h_{ij}^{q^{\ast}}\\
&-K(H^{4}+\sum_{j,k,p}H^{k^{\ast}}H^{j^{\ast}}H^{p^{\ast}}h_{jk}^{p^{\ast}})
-\sum_{i,j,k,l,p,q,r}H^{r^{\ast}}H^{q^{\ast}}h_{jk}^{r^{\ast}}h_{jk}^{p^{\ast}}h_{il}^{p^{\ast}}h_{il}^{q^{\ast}}\\
&+2\sum_{i,j,k,p,q}H_{,i}^{p^{\ast}}H^{q^{\ast}}h_{jk}^{q^{\ast}}h_{ijk}^{p^{\ast}}
-\sum_{i,j,k,p,q,r}H^{p^{\ast}}H^{q^{\ast}}H^{r^{\ast}}h_{ik}^{p^{\ast}}h_{ji}^{q^{\ast}}h_{jk}^{r^{\ast}}\\
&+\sum_{i,j,k}\bigl(\sum_{q}(H_{,i}^{q^{\ast}}h_{jk}^{q^{\ast}}+ H^{q^{\ast}}h_{ijk}^{q^{\ast}})\bigl)\cdot\bigl(\sum_{p}(H_{,i}^{p^{\ast}}h_{jk}^{p^{\ast}}+H^{p^{\ast}}h_{ijk}^{p^{\ast}})\bigl).
\endaligned
\end{equation*}
\end{proof}
\noindent
\begin{lemma}
Let $X:M^2\rightarrow \mathbb{R}^4$ be  a 2-dimensional Lagrangian self-shrinker in $\mathbb R^4$. Then we have
\begin{equation}
\begin{aligned}
&\sum_{i,j,k,l,p}(h_{ijkl}^{p^{\ast}})^{2}\\
=&4(h_{1122}^{1^{\ast}})^{2}+6(h_{2211}^{1^{\ast}})^{2}+6(h_{1122}^{2^{\ast}})^{2}+4(h_{2211}^{2^{\ast}})^{2}
+4(h_{2222}^{1^{\ast}})^{2}+4(h_{1111}^{2^{\ast}})^{2}\\
&+(h_{1111}^{1^{\ast}})^{2}+(h_{2222}^{2^{\ast}})^{2}+(h_{1112}^{1^{\ast}})^{2}+(h_{2221}^{2^{\ast}})^{2}\\
\geq&2(h_{1122}^{1^{\ast}}-h_{2211}^{1^{\ast}})^{2}+2(h_{1122}^{2^{\ast}}-h_{2211}^{2^{\ast}})^{2}
+\frac{1}{2}(h_{2222}^{1^{\ast}}-h_{2221}^{2^{\ast}})^{2}. \\
\end{aligned}
\end{equation}

\end{lemma}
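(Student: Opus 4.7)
The plan is to exploit the full symmetry of $h^{p^{\ast}}_{ijk}$ in its four indices $(i,j,k,p)$, a consequence of the Lagrangian identity $h^{p^{\ast}}_{ij}=h^{i^{\ast}}_{pj}$ and the Codazzi equation $h^{p^{\ast}}_{ijk}=h^{i^{\ast}}_{pjk}$ recorded in Section~2. Differentiating once more, the first four indices of $h^{p^{\ast}}_{ijkl}$ are still symmetric among themselves, while $l$ is free. In dimension two this reduces the $2^{5}=32$ summands of $\sum(h^{p^{\ast}}_{ijkl})^{2}$ to ten genuinely distinct squares, parameterized by pairs $(a,b)$, where $a\in\{0,1,2,3,4\}$ counts the number of $1$'s among $(i,j,k,p)$ and $b=l\in\{1,2\}$.

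First, for each pair $(a,b)$ I would choose a representative admissible tuple $(i,j,k,l,p)$ and multiply by its orbit size $\binom{4}{a}\in\{1,4,6,4,1\}$. Carrying this out reproduces the stated ten-term expansion. The bookkeeping forces, for instance, $h^{1^{\ast}}_{1122}$ to have coefficient $4$ (its symmetric quadruple is $(1,1,2,1)$ with $a=3$), while $h^{1^{\ast}}_{2211}$ has coefficient $6$ (quadruple $(2,2,1,1)$ with $a=2$), and similarly for $h^{2^{\ast}}_{1122}$ versus $h^{2^{\ast}}_{2211}$; the unit-coefficient terms correspond to the four orbits of size $1$ attached to $(a,b)\in\{(4,1),(4,2),(0,1),(0,2)\}$.

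To obtain the inequality, I would discard the non-negative terms that do not feed the right-hand side and group the remainder into three pairs:
\[
4(h^{1^{\ast}}_{1122})^{2}+6(h^{1^{\ast}}_{2211})^{2},\quad 6(h^{2^{\ast}}_{1122})^{2}+4(h^{2^{\ast}}_{2211})^{2},\quad 4(h^{1^{\ast}}_{2222})^{2}+(h^{2^{\ast}}_{2221})^{2}.
\]
Each pair is controlled by the elementary inequality $\alpha u^{2}+\beta v^{2}\geq\frac{\alpha\beta}{\alpha+\beta}(u-v)^{2}$, which is just a rearrangement of $\alpha u^{2}+\beta v^{2}-\frac{\alpha\beta}{\alpha+\beta}(u-v)^{2}=\frac{(\alpha u+\beta v)^{2}}{\alpha+\beta}\geq 0$. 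Here the constant $\frac{\alpha\beta}{\alpha+\beta}$ equals $\frac{24}{10}\geq 2$ for the first two pairs and $\frac{4}{5}\geq\frac{1}{2}$ for the third, yielding exactly the three terms required on the right-hand side.

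The argument is more combinatorial than analytical, and the only real obstacle is the orbit-counting step: one must carefully match each of the ten representatives appearing in the stated identity with the correct multiplicity from $\{1,4,6,4,1\}$ under the $S_{4}$-action on $(i,j,k,p)$, being attentive that the index $l$ is a separate coordinate and in particular that pairs like $h^{1^{\ast}}_{2222}$ and $h^{2^{\ast}}_{2221}$ belong to different orbits despite their superficially similar form. Once that bookkeeping is fixed, both the displayed equality and the subsequent inequality reduce to elementary arithmetic.
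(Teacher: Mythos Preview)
Your proposal is correct and follows essentially the same strategy as the paper. The paper establishes the equality by first fixing $p$ and using the $S_3$-symmetry in $(i,j,k)$, then converting via the Lagrangian identity and summing, whereas you invoke the full $S_4$-symmetry of $(i,j,k,p)$ directly; for the inequality the paper writes $4a^2+4b^2=2(a+b)^2+2(a-b)^2$ and discards the ``$+$'' squares, which is equivalent to (a slightly weaker form of) your bound $\alpha u^2+\beta v^2\ge\frac{\alpha\beta}{\alpha+\beta}(u-v)^2$.
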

\begin{proof}
Since
\begin{equation}
\sum_{i,j,k,l,p}(h_{ijkl}^{p^{\ast}})^{2}=\sum_{i,j,k,l}(h_{ijkl}^{1^{\ast}})^{2}+\sum_{i,j,k,l}(h_{ijkl}^{2^{\ast}})^{2},
\end{equation}

\begin{equation}
\aligned
\sum_{i,j,k,l}(h_{ijkl}^{1^{\ast}})^{2}
=&(h_{1111}^{1^{\ast}})^{2}+(h_{1112}^{1^{\ast}})^{2}+(h_{2221}^{1^{\ast}})^{2}+(h_{2222}^{1^{\ast}})^{2}\\
&+3(h_{1121}^{1^{\ast}})^{2}+3(h_{1122}^{1^{\ast}})^{2}
+3(h_{2211}^{1^{\ast}})^{2}+3(h_{2212}^{1^{\ast}})^{2} \\
=&(h_{1111}^{1^{\ast}})^{2}+3\bigl[(h_{1122}^{1^{\ast}})^{2}+(h_{2211}^{1^{\ast}})^{2}\bigl]+3(h_{1122}^{2^{\ast}})^{2}\\
&+(h_{1112}^{1^{\ast}})^{2}+(h_{2211}^{2^{\ast}})^{2}+3(h_{1111}^{2^{\ast}})^{2}+(h_{2222}^{1^{\ast}})^{2},
\endaligned
\end{equation}

\begin{equation}
\aligned
\sum_{i,j,k,l}(h_{ijkl}^{2^{\ast}})^{2}
=&(h_{1111}^{2^{\ast}})^{2}+(h_{1112}^{2^{\ast}})^{2}+(h_{2222}^{2^{\ast}})^{2}+(h_{2221}^{2^{\ast}})^{2}\\
&+3(h_{1121}^{2^{\ast}})^{2}+3(h_{1122}^{2^{\ast}})^{2}
+3(h_{2211}^{2^{\ast}})^{2}+3(h_{2212}^{2^{\ast}})^{2} \\
=&(h_{1111}^{2^{\ast}})^{2}+(h_{2222}^{2^{\ast}})^{2}+(h_{2221}^{2^{\ast}})^{2}+(h_{1122}^{1^{\ast}})^{2}\\
&+3(h_{2211}^{1^{\ast}})^{2}+3(h_{1122}^{2^{\ast}})^{2}
+3(h_{2211}^{2^{\ast}})^{2}+3(h_{2222}^{1^{\ast}})^{2},
\endaligned
\end{equation}
we get
\begin{equation}
\aligned
&\sum_{i,j,k,l,p}(h_{ijkl}^{p^{\ast}})^{2}\\
=&4(h_{1122}^{1^{\ast}})^{2}+6(h_{2211}^{1^{\ast}})^{2}+6(h_{1122}^{2^{\ast}})^{2}+(h_{1111}^{1^{\ast}})^{2}
+(h_{1112}^{1^{\ast}})^{2}+3(h_{1111}^{2^{\ast}})^{2}\\
&+(h_{2222}^{1^{\ast}})^{2}+(h_{1111}^{2^{\ast}})^{2}+(h_{2222}^{2^{\ast}})^{2}+(h_{2221}^{2^{\ast}})^{2}+
3(h_{2222}^{1^{\ast}})^{2}+4(h_{2211}^{2^{\ast}})^{2}\\
=&4(h_{1122}^{1^{\ast}})^{2}+6(h_{2211}^{1^{\ast}})^{2}+6(h_{1122}^{2^{\ast}})^{2}+4(h_{2211}^{2^{\ast}})^{2}
+4(h_{2222}^{1^{\ast}})^{2}+4(h_{1111}^{2^{\ast}})^{2}\\
&+(h_{1111}^{1^{\ast}})^{2}+(h_{2222}^{2^{\ast}})^{2}+(h_{1112}^{1^{\ast}})^{2}+(h_{2221}^{2^{\ast}})^{2}\\
\geq&2(h_{1122}^{1^{\ast}}-h_{2211}^{1^{\ast}})^{2}+2(h_{1122}^{2^{\ast}}-h_{2211}^{2^{\ast}})^{2}
+\frac{1}{2}(h_{2222}^{1^{\ast}}-h_{2221}^{2^{\ast}})^{2} \\
&+2(h_{1122}^{1^{\ast}}+h_{2211}^{1^{\ast}})^{2}+2(h_{1122}^{2^{\ast}}+h_{2211}^{2^{\ast}})^{2}
+\frac{1}{2}(h_{2222}^{1^{\ast}}+h_{2221}^{2^{\ast}})^{2} \\
\geq&2(h_{1122}^{1^{\ast}}-h_{2211}^{1^{\ast}})^{2}+2(h_{1122}^{2^{\ast}}-h_{2211}^{2^{\ast}})^{2}
+\frac{1}{2}(h_{2222}^{1^{\ast}}-h_{2221}^{2^{\ast}})^{2}. \\
\endaligned
\end{equation}
\end{proof}
\noindent
If ${\vec H}\neq0$ at  $p$, we can choose a local orthogonal frame $\{e_{1}, e_{2} \}$
 such that
 \begin{equation}
e_{1^{*}}=\frac{{\vec H}}{|{\vec H}|}, \ \ H^{1^{*}}=H=|{\vec H}|,\ \ H^{2^{*}}=h_{11}^{2^{*}}+h_{22}^{2^{*}}=0.
\end{equation}
Defining $\lambda=h_{12}^{1^{*}}$, $\lambda_1=h_{11}^{1^{*}}$ and $\lambda_2=h_{22}^{1^{*}}$, we have
$h_{22}^{2^{*}}=-\lambda$.

\begin{lemma}
Let $X:M^2\rightarrow \mathbb{R}^4$ be  a 2-dimensional Lagrangian self-shrinker in $\mathbb R^4$.
If $S$ is constant,   ${\vec H}(p)\neq0$ and $\sum_{i,j,k,p}(h_{ijk}^{p^{\ast}})^{2}(p)=0$,
then we  have, at  $p$,
\begin{equation}
\aligned
\frac{1}{2}\mathcal{L}\sum_{i,j,k,p}(h_{ijk}^{p^{\ast}})^{2}=\sum_{i,j,k,l,p}(h_{ijkl}^{p^{\ast}})^{2}
\endaligned
\end{equation}
and
\begin{equation}
\aligned
&\frac{1}{2}\mathcal{L}\sum_{i,j,k,p}(h_{ijk}^{p^{\ast}})^{2}\\
=&H^{2}\bigl[H^{2}-2S+\frac{1}{2}H^{4}-H^2\lambda^2-KH^{2}-K^{2}\bigl]\\
&+H^{2}(S+2-\frac{3}{2}H^{2}-\lambda_{1}^{2}
-\lambda_{2}^{2}-2\lambda^2)(\lambda_{1}^{2}+\lambda_{2}^{2}+2\lambda^2).
\endaligned
\end{equation}

\end{lemma}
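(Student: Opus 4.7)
The key observation is that the hypothesis $\sum_{i,j,k,p}(h_{ijk}^{p^{\ast}})^{2}(p)=0$ forces every component $h_{ijk}^{p^{\ast}}$ to vanish at the point $p$. Since $\vec H$ is the trace of the second fundamental form, the Codazzi equation (2.8) gives $H^{p^{\ast}}_{,i}=\sum_{k}h_{kki}^{p^{\ast}}$, so $H^{p^{\ast}}_{,i}$ also vanishes there for every $i$ and every $p$. Consequently $|\nabla^{\perp}\vec H|^{2}=0$ and $\nabla H^{2}=2\sum_{p}H^{p^{\ast}}\nabla H^{p^{\ast}}=0$ at the point, and combining with $\nabla S\equiv 0$ and $K=\tfrac12(H^{2}-S)$ we also obtain $\nabla K=0$. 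Substituting all these zeros into formula (2.34) of Lemma 2.3 annihilates every term on the right except $\sum_{i,j,k,l,p}(h_{ijkl}^{p^{\ast}})^{2}$, which is precisely the first identity.

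For the second identity I feed the same pointwise vanishings into formula (2.35). The contributions $(H^{2}-2S)|\nabla^{\perp}\vec H|^{2}$, $\tfrac12|\nabla H^{2}|^{2}$, $2\sum H_{,i}^{p^{\ast}}H^{q^{\ast}}h_{jk}^{q^{\ast}}h_{ijk}^{p^{\ast}}$ and the last sum $\sum(H_{,i}^{q^{\ast}}h_{jk}^{q^{\ast}}+H^{q^{\ast}}h_{ijk}^{q^{\ast}})(H_{,i}^{p^{\ast}}h_{jk}^{p^{\ast}}+H^{p^{\ast}}h_{ijk}^{p^{\ast}})$ all disappear, leaving only $(H^{2}-2S)H^{2}$, the $(3K+2-H^{2}+2S)$-multiple of $\sum H^{p^{\ast}}h_{ij}^{p^{\ast}}H^{q^{\ast}}h_{ij}^{q^{\ast}}$, together with three remaining cubic-quartic traces. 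Using the adapted Lagrangian frame (2.46) and the symmetry (2.2), which gives $h_{11}^{1^{\ast}}=\lambda_{1}$, $h_{12}^{1^{\ast}}=h_{11}^{2^{\ast}}=\lambda$, $h_{22}^{1^{\ast}}=h_{12}^{2^{\ast}}=\lambda_{2}$, $h_{22}^{2^{\ast}}=-\lambda$, $H^{1^{\ast}}=H$, $H^{2^{\ast}}=0$, I then evaluate the four remaining sums explicitly: $\sum H^{p^{\ast}}h_{ij}^{p^{\ast}}H^{q^{\ast}}h_{ij}^{q^{\ast}}=H^{2}(\lambda_{1}^{2}+\lambda_{2}^{2}+2\lambda^{2})$, $\sum H^{k^{\ast}}H^{j^{\ast}}H^{p^{\ast}}h_{jk}^{p^{\ast}}=H^{3}\lambda_{1}$, the quartic $\sum H^{r^{\ast}}H^{q^{\ast}}h_{jk}^{r^{\ast}}h_{jk}^{p^{\ast}}h_{il}^{p^{\ast}}h_{il}^{q^{\ast}}=H^{2}(\lambda_{1}^{2}+\lambda_{2}^{2}+2\lambda^{2})^{2}+\lambda^{2}H^{4}$ (using the Lagrangian identity $\sum_{j,k}h_{jk}^{1^{\ast}}h_{jk}^{2^{\ast}}=\lambda H$), and $\sum H^{p^{\ast}}H^{q^{\ast}}H^{r^{\ast}}h_{ik}^{p^{\ast}}h_{ji}^{q^{\ast}}h_{jk}^{r^{\ast}}=H^{3}\mathrm{tr}(A^{3})=H^{3}(\lambda_{1}^{3}+\lambda_{2}^{3}+3\lambda^{2}H)$, where $A=(h_{ij}^{1^{\ast}})$ is the symmetric matrix with entries $\lambda_{1},\lambda,\lambda_{2}$.

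Substituting these four values into the reduced form of (2.35) leaves a purely algebraic identity between polynomials in $H$, $K$, $S$, $\lambda_{1}$, $\lambda_{2}$, $\lambda$. The main obstacle is the ensuing bookkeeping: the negatives of the quartic $H^{2}(\lambda_{1}^{2}+\lambda_{2}^{2}+2\lambda^{2})^{2}+\lambda^{2}H^{4}$ and of $H^{3}(\lambda_{1}^{3}+\lambda_{2}^{3}+3\lambda^{2}H)+KH^{3}\lambda_{1}$ must reorganize themselves into the $-H^{2}(\lambda_{1}^{2}+\lambda_{2}^{2}+2\lambda^{2})^{2}$ coming from $-T_{2}^{2}$ and the compact polynomial $H^{2}[\tfrac12H^{4}-H^{2}\lambda^{2}-KH^{2}-K^{2}]$ of the claimed formula. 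Expanding $K=\tfrac12(H^{2}-S)$, $H=\lambda_{1}+\lambda_{2}$ and $S=\lambda_{1}^{2}+3\lambda_{2}^{2}+4\lambda^{2}$ (the value of $S$ in the chosen frame), the verification becomes a direct comparison of coefficients in $\lambda_{1},\lambda_{2},\lambda$; the relation $\sum H^{p^{\ast}}h_{jk}^{p^{\ast}}H^{q^{\ast}}h_{jk}^{q^{\ast}}=S(1-\tfrac32S)+2H^{2}S-\tfrac12H^{4}$ implied by (2.36) offers a consistency check but is not required for the final algebraic step.
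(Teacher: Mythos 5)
Your proposal is correct and follows essentially the same route as the paper: both identities come from substituting the adapted frame values ($h^{1^{\ast}}_{11}=\lambda_1$, $h^{1^{\ast}}_{12}=h^{2^{\ast}}_{11}=\lambda$, $h^{1^{\ast}}_{22}=h^{2^{\ast}}_{12}=\lambda_2$, $h^{2^{\ast}}_{22}=-\lambda$, $H^{2^{\ast}}=0$) into (2.34) and (2.35), with all gradient and third-order terms killed by $\nabla S=0$ and $h^{p^{\ast}}_{ijk}(p)=0$, and your four trace evaluations coincide with the paper's. The only cosmetic difference is that the paper completes the simplification using the Gauss-equation identity $H\lambda_1=K+\lambda_1^{2}+\lambda_2^{2}+2\lambda^{2}$ rather than your proposed brute-force comparison of coefficients in $\lambda_1,\lambda_2,\lambda$, and the routine algebra you defer does indeed close to the stated formula.
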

\begin{proof}
Since ${\vec H}\neq0$ at  $p$, we can choose  a local orthogonal frame $\{e_{1}, e_{2} \}$  such that
\begin{equation}
e_{1^{*}}=\frac{{\vec H}}{|{\vec H}|}, \ \ H^{1^{*}}=H=|{\vec H}|,\ \ H^{2^{*}}=h_{11}^{2^{*}}+h_{22}^{2^{*}}=0.
\end{equation}
By the definition of  $\lambda=h_{12}^{1^{*}}$, $\lambda_1=h_{11}^{1^{*}}$ and $\lambda_2=h_{22}^{1^{*}}$,
we have
$h_{22}^{2^{*}}=-\lambda$.
Since $S$ is constant and  $h_{ijk}^{p^{\ast}}=0$ at $p$,  we obtain from (2.34) of Lemma 2.3,
\begin{equation}
\aligned
\frac{1}{2}\mathcal{L}\sum_{i,j,k,p}(h_{ijk}^{p^{\ast}})^{2}
=\sum_{i,j,k,l,p}(h_{ijkl}^{p^{\ast}})^{2}.
\endaligned
\end{equation}
Furthermore, by making use of
\begin{equation}
S=\lambda_{1}^{2}+3\lambda_{2}^{2}+4\lambda^2, \ \ H\lambda_{1}=K+\lambda_{1}^{2}+\lambda_{2}^{2}+2\lambda^2,
\end{equation}
from (2.35) in Lemma 2.3, we have the following equations,  at $p$,
\begin{equation}
\aligned
&\frac{1}{2}\mathcal{L}\sum_{i,j,k,p}(h_{ijk}^{p^{\ast}})^{2}\\
=&(H^{2}-2S)H^{2}+(\frac{1}{2}H^{2}+\frac{1}{2}S+2)H^{2}(\lambda_{1}^{2}+\lambda_{2}^{2}+2\lambda^2)\\
&-K(H^{4}+H^{3}\lambda_{1})-H^{2}\bigl\{(\lambda_{1}^{2}+\lambda_{2}^{2}+2\lambda^2)^{2}+\lambda^2H^2\bigl\}\\
&-H^{3}(\lambda_{1}^{3}+\lambda_{2}^{3}+3H\lambda^2)\\
=&H^{2}\biggl[H^{2}-2S+(\frac{1}{2}H^{2}
+\frac{1}{2}S+2-\lambda_{1}^{2}-\lambda_{2}^{2}-2\lambda^2)(\lambda_{1}^{2}+\lambda_{2}^{2}+2\lambda^2)\\
&-K(H^{2}+K+\lambda_{1}^{2}+\lambda_{2}^{2}+2\lambda^2)-H^{2}(\lambda_{1}^{2}+\lambda_{2}^{2}-\lambda_{1}\lambda_{2}+4\lambda^2)\biggl]\\
=&H^{2}\biggl[H^{2}-2S+(S+2-\lambda_{1}^{2}-\lambda_{2}^{2}-2\lambda^2)(\lambda_{1}^{2}+\lambda_{2}^{2}+2\lambda^2)
-K(H^{2}+K)\\
&-H^{2}(\lambda_{1}^{2}+\lambda_{2}^{2}+2\lambda^2)
+\frac{1}{2}H^{4}-H^2\lambda^2-\frac{1}{2}H^{2}(\lambda_{1}^{2}+\lambda_{2}^{2}+2\lambda^2)\biggl]\\
=&H^{2}\bigl[H^{2}-2S+\frac{1}{2}H^{4}-H^2\lambda^2-KH^{2}-K^{2}\bigl]\\
&+H^{2}(S+2-\frac{3}{2}H^{2}-\lambda_{1}^{2}
-\lambda_{2}^{2}-2\lambda^2)(\lambda_{1}^{2}+\lambda_{2}^{2}+2\lambda^2).
\endaligned
\end{equation}
This finishes  the proof.
\end{proof}

\vskip2mm
\noindent
In order to  prove  our results,  we need the following important
generalized maximum principle for $\mathcal{L}$-operator on self-shrinkers  which was proved
by Cheng and Peng in \cite{CP}:
\begin{lemma} {\rm(}Generalized maximum principle for $\mathcal{L}$-operator {\rm)}
Let $X: M^n\to \mathbb{R}^{n+p}$ {\rm (}$p\geq 1${\rm)} be a complete self-shrinker with Ricci
curvature bounded from below. Let $f$ be any $C^2$-function bounded
from above on this self-shrinker. Then, there exists a sequence of points
$\{p_m\}\subset M^n$, such that
\begin{equation*}
\lim_{m\rightarrow\infty} f(X(p_m))=\sup f,\quad
\lim_{m\rightarrow\infty} |\nabla f|(X(p_m))=0,\quad
\limsup_{m\rightarrow\infty}\mathcal{L} f(X(p_m))\leq 0.
\end{equation*}
\end{lemma}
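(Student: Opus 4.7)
The plan is to adapt the classical Omori--Yau maximum principle to the drift operator $\mathcal{L}=\Delta-\langle X,\nabla\cdot\rangle$ by constructing a proper barrier function on $M^{n}$. The hypothesis of a lower Ricci bound together with completeness gives access to the Laplacian comparison theorem, which is the starting point.

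The key observation is that since $X:M^{n}\to\mathbb{R}^{n+p}$ is an isometric immersion, any rectifiable curve in $M$ has the same length as its image in $\mathbb{R}^{n+p}$. Fixing a basepoint $x_{0}\in M$ and writing $r(x):=d_{M}(x,x_{0})$, one obtains $|X(x)-X(x_{0})|\le r(x)$, and hence $|X(x)|\le |X(x_{0})|+r(x)$. This is the crucial bridge between the \emph{extrinsic} quantity $|X|$ appearing in the drift term and the \emph{intrinsic} geometry controlled by the Ricci bound.

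First I would set $\gamma(x):=\log(1+r(x)^{2})$ and check that $\gamma$ is proper on $M$ (by completeness), $|\nabla\gamma|$ is bounded by direct computation, and that $\mathcal{L}\gamma$ is uniformly bounded from above. For the latter, Laplacian comparison (using $\mathrm{Ric}\ge -K$) yields $\Delta r\le C(1+r)$ off the cut locus, which gives $\Delta\gamma\le C$; and the extrinsic bound above gives
\[
|\langle X,\nabla\gamma\rangle|=\frac{2r\,|\langle X,\nabla r\rangle|}{1+r^{2}}\le \frac{2r(|X(x_{0})|+r)}{1+r^{2}}\le C.
\]
Next, for each $\varepsilon>0$ the function $F_{\varepsilon}:=f-\varepsilon\gamma$ is bounded above (since $f$ is bounded above and $\gamma\ge 0$) and tends to $-\infty$ as $r\to\infty$, so it attains its maximum at some $p_{\varepsilon}\in M$. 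At this point the pointwise strong maximum principle gives $\nabla f(p_{\varepsilon})=\varepsilon\nabla\gamma(p_{\varepsilon})$ with $|\nabla f|(p_{\varepsilon})\le C\varepsilon$, and $\mathcal{L}f(p_{\varepsilon})\le \varepsilon\,\mathcal{L}\gamma(p_{\varepsilon})\le C\varepsilon$. Comparing $F_{\varepsilon}(p_{\varepsilon})$ with $F_{\varepsilon}(q_{k})$ for a sequence $\{q_{k}\}$ with $f(q_{k})\to\sup f$ forces $f(p_{\varepsilon})\to\sup f$ as $\varepsilon\to 0$ via a diagonal extraction. Setting $p_{m}:=p_{1/m}$ (after the diagonal extraction) yields the desired sequence.

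The main obstacle is the quantitative estimate $\mathcal{L}\gamma\le C$ across \emph{all} of $M$, including the cut locus $\mathrm{Cut}(x_{0})$ where $r$ is merely Lipschitz. The standard remedy is Calabi's trick: near any candidate maximum point $p_{\varepsilon}$ that happens to lie on the cut locus, perturb the basepoint $x_{0}$ slightly along a minimizing geodesic from $x_{0}$ to $p_{\varepsilon}$ to obtain a smooth upper barrier $\tilde r\ge r$ with the same value at $p_{\varepsilon}$, apply the maximum principle to the smooth function $f-\varepsilon\log(1+\tilde r^{2})$, and pass to the limit. The drift term needs similar care since it too involves $\nabla r$. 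Once this technical point is handled, the balance $|\nabla f|(p_{m}),\,\mathcal{L}f(p_{m})\le C/m$ together with $f(p_{m})\to\sup f$ completes the argument.
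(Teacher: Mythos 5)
The paper does not give a proof of this lemma; it is quoted from Cheng--Peng \cite{CP}, where it is established by exactly the argument you outline. Your proposal is correct and is essentially that proof: the only ingredient beyond the classical Omori--Yau scheme is the linear growth bound $|X(x)|\le |X(x_0)|+r(x)$ furnished by the isometric immersion, which makes the drift term $\langle X,\nabla\gamma\rangle$ uniformly bounded for $\gamma=\log(1+r^2)$, and you have identified this correctly along with the Calabi-trick caveat at the cut locus.
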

\noindent

\section{Examples of Lagrangian self-shrinkers in $\mathbb R^4$}

\vskip2mm
\noindent
It is known that the  Euclidean plane $\mathbb R^2$, the cylinder $S^1(1)\times \mathbb{R}^{1}$
and the Clifford torus $S^1(1)\times S^1(1)$ are the canonical Lagrangian self-shrinkers in $\mathbb R^4$.
Apart from the standard examples, there are many  other examples of complete Lagrangian self-shrinkers in $\mathbb R^4$.

\begin{example}
Let $\Gamma_1(s)=(x_1(s), y_1(s))^T$, $0\leq s<L_1$ and $\Gamma_2(t)=(x_2(t), y_2(t))^T$, $0\leq t<L_2$ be  two self-shrinker curves in $\mathbb{R}^2$
with arc length as parameters,  respectively.
We consider Riemannian product $\Gamma_1(s)\times \Gamma_2(t)$ of  $\Gamma_1(s)$ and $\Gamma_2(t)$ defined
by
$$
X(s,t)=\begin{pmatrix}x_1(s)\\ x_2(t)\\ y_1(s)\\ y_2(t)
\end{pmatrix}.
$$
\end{example}

\noindent We can prove $\Gamma_1(s)\times \Gamma_2(t)$  is a Lagrangian self-shrinker in $\mathbb R^4$
and the Gaussian curvature $K$ of $\Gamma_1(s)\times \Gamma_2(t)$  satisfies $K\equiv 0$.

\vskip1mm
\noindent
In \cite{AL}, Abresch and Langer  classfied
closed self-shrinking curves. For two closed self-shrinking curves $\Gamma_1(s)$ and $\Gamma_2(t)$ of  Abresch and Langer in
$\mathbb R^2$,  $\Gamma_1(s)\times \Gamma_2(t)$  is   a compact Lagrangian self-shrinker  in $\mathbb R^4$, which is called Abresch-Langer torus. It is known that complete and non-compact self-shrinking curves exist in $\mathbb R^2$ \rm(see \cite{H}\rm ).
Consequently, there
are many complete and non-compact Lagrangian self-shrinkers with zero  Gaussian curvature in $\mathbb R^4$.

\begin{example} For  a closed curve  $\gamma(t)=(x_1(t), x_2(t))^T$, $t\in I$,
such that its curvature $\kappa_{\gamma}$  satisfy
$$
\kappa_{\gamma}=E\dfrac{e^{\frac{|\gamma|^2}2}}{|\gamma|^2}(|\gamma|^2-1),
\ \ E^2= |\gamma|^4(1-(\dfrac{d|\gamma|}{dt})^2)e^{|\gamma|^2},
$$
where $E$ is a positive  constant.  In \cite{A}, Anciaux proved that
\begin{equation*}
\begin{aligned}
&X(s,t)=\begin{pmatrix}x_1(t)\cos s\\ x_1(t)\sin s\\ x_2(t)\cos s\\ x_2(t)\sin s\end{pmatrix}
\end{aligned}
\end{equation*}
defines a compact Lagrangian self-shrinker in $\mathbb R^4$, which is called  Anciaux torus,
and the squared norm $S$
of the second fundamental form  satisfies
$$
 S=E^2\dfrac{e^{\frac{|\gamma|^6}2}}{|\gamma|^2}(|\gamma|^4-2|\gamma|^2+4).
$$

\end{example}

\begin{example} For positive integers $m, n$ with  $(m,n)=1$,
define $X^{m,n}: \mathbb R^2\to \mathbb R^4$ by
\begin{equation*}
\begin{aligned}
X^{m,n}(s,t)=\sqrt{m+n}\begin{pmatrix}\dfrac{\cos s}{\sqrt n}\cos\sqrt{\frac nm}t\\  \dfrac{\sin s}{\sqrt m}\cos\sqrt{\frac mn}t\\
\dfrac{\cos s}{\sqrt n}\sin\sqrt{\frac nm}t\\  \dfrac{\sin s}{\sqrt m}\sin\sqrt{\frac mn}\end{pmatrix}.
\end{aligned}
\end{equation*}
\end{example}
\noindent Lee and Wang \cite{LeW} proved $X^{m,n}: \mathbb R^2\to \mathbb R^4$  is a Lagrangian self-shrinker in $\mathbb R^4$.
It is not difficult to prove  that the squared norm $S$ and the Gauss curvature $K$ of $X^{m,n}: \mathbb R^2\to \mathbb R^4$,
for  $m\leq n$,  satisfy
$$
  \dfrac{3m^2+n^2}{n(m+n)}\leq S \leq \dfrac{m^2+3n^2}{m(n+m)},
 $$
$$
 - \dfrac{n(n-m)}{m(m+n)}\leq K \leq \dfrac{m(n-m)}{n(n+m)}.
  $$

 \vskip10mm
\section{Proofs of the main results}

\vskip5mm
\noindent
First of all, we prove the following:

\vskip2mm
\noindent
\begin{theorem} Let $X: M^2\to \mathbb{R}^{4}$ be a
2-dimensional complete  Lagrangian self-shrinker   in $\mathbb R^4$.
If  the squared norm $S$ of the second fundamental form is constant, then  $S\leq 2$.
\end{theorem}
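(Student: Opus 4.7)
Since $S$ is constant, Cauchy--Schwarz applied to $H^{p^*} = h_{11}^{p^*}+h_{22}^{p^*}$ yields $H^2 \leq 2S$, so $H^2$ is bounded. In dimension two the Ricci curvature equals $K\,g$ with $K=(H^2-S)/2$, hence it is bounded below, and the generalized maximum principle (Lemma~2.6) is available for any bounded smooth function on $M$. The plan is to suppose $S > 2$ and derive a contradiction by iterating the maximum principle on carefully chosen test functions.

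First I would dispose of the case $\vec H \equiv 0$ on $M$: then $X^\perp \equiv 0$ by the self-shrinker equation, so $X$ is tangent to $M$ everywhere, making $M$ a complete $2$-dimensional cone in $\mathbb R^4$. Completeness forces the origin to lie in $M$, and smoothness at the origin forces $M$ to be a plane, yielding $S=0$. So one may assume $\sup_M H^2 > 0$. The main step is then to apply Lemma~2.6 to $-\sum(h_{ijk}^{p^*})^2$, which is bounded above by $0$: the nonnegative function $\sum(h_{ijk}^{p^*})^2$ is bounded, since the identity $\mathcal{L}S \equiv 0$ from Lemma~2.1 gives $\sum(h_{ijk}^{p^*})^2 = \Sigma + \tfrac{3}{2}S^2 - S + 2H^2 S - \tfrac{1}{2}H^4$ with $\Sigma \leq H^2 S \leq 2S^2$. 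This produces a sequence $\{q_m\}$ along which $\sum(h_{ijk}^{p^*})^2(q_m) \to \inf_M \sum(h_{ijk}^{p^*})^2$, $|\nabla \sum(h_{ijk}^{p^*})^2|(q_m) \to 0$, and $\liminf \mathcal{L}\sum(h_{ijk}^{p^*})^2(q_m) \geq 0$; after passing to a subsequence, $H^2(q_m)$ also converges.

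If the infimum is zero, then $\sum(h_{ijk}^{p^*})^2(q_m) \to 0$, all the cubic and quartic error terms in the general formula of Lemma~2.3 vanish in the limit, and Lemma~2.5's explicit formula becomes applicable asymptotically. Combined with the algebraic relations $S = \lambda_1^2 + 3\lambda_2^2 + 4\lambda^2$, $H = \lambda_1 + \lambda_2$, $H\lambda_1 = K + \lambda_1^2 + \lambda_2^2 + 2\lambda^2$, and the limiting form of $\mathcal{L}S \equiv 0$, this reduces the problem to a polynomial inequality in $S, H^2, \lambda, \lambda_1, \lambda_2$ whose analysis forces $S \leq 2$. The hard part will be handling the case $\inf_M \sum(h_{ijk}^{p^*})^2 > 0$: there Lemma~2.5 does not directly apply in the limit, and one must exploit the explicit lower bound of Lemma~2.4 --- which controls $\sum(h_{ijkl}^{p^*})^2$ by squares of differences like $(h_{1122}^{1^*}-h_{2211}^{1^*})^2$ --- to show that the positive $\sum(h_{ijkl}^{p^*})^2$ contribution to $\mathcal{L}\sum(h_{ijk}^{p^*})^2$ dominates the cubic remainder in Lemma~2.3, forcing a contradiction with the maximum-principle sign. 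This polynomial bookkeeping is the technical heart of the proof.
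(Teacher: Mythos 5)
Your proposal has a genuine gap at what you yourself call its ``technical heart.'' The case $\inf_M\sum_{i,j,k,p}(h_{ijk}^{p^{\ast}})^2>0$ is not handled: you defer the entire argument to unspecified ``polynomial bookkeeping,'' and the sign logic you sketch is in fact backwards. Applying Lemma 2.6 to $-\sum(h_{ijk}^{p^{\ast}})^2$ gives, along the sequence, $\liminf_m\mathcal{L}\sum(h_{ijk}^{p^{\ast}})^2(q_m)\ge 0$; the term $\sum(h_{ijkl}^{p^{\ast}})^2$ in (2.34) enters with a \emph{positive} sign, so showing via Lemma 2.4 that it ``dominates the cubic remainder'' only confirms $\mathcal{L}\sum(h_{ijk}^{p^{\ast}})^2\ge 0$ and produces no contradiction with the maximum principle. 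Nothing in your setup forces the infimum of $\sum(h_{ijk}^{p^{\ast}})^2$ to be zero, so the argument does not close. (Your infimum-zero case, by contrast, could be completed much more simply than you indicate: with $\bar h_{ijk}^{p^{\ast}}=0$ the identity $\tfrac12\mathcal{L}S=0$ from (2.25) rearranges to $S(1-\tfrac12S)=(S-\bar H^2)^2+\tfrac12\bar H^2(\bar\lambda_1-\bar\lambda_2)^2+2\bar H^2\bar\lambda^2\ge 0$, hence $S\le 2$; Lemma 2.5 is not needed there.)

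The missing idea is the paper's choice of test function: the generalized maximum principle is applied to $-|X|^2$, not to $-\sum(h_{ijk}^{p^{\ast}})^2$. Since $\tfrac12\mathcal{L}|X|^2=2-|X|^2$, the minimizing sequence $\{p_m\}$ for $|X|^2$ gives both $\lim_m H^2(p_m)=\inf|X|^2\le 2$ and $\langle X,e_i\rangle(p_m)\to 0$; the latter is decisive, because $H^{p^{\ast}}_{,i}=\sum_k h^{p^{\ast}}_{ik}\langle X,e_k\rangle$ then forces $\bar h^{p^{\ast}}_{11k}+\bar h^{p^{\ast}}_{22k}=0$, and together with $\nabla_kS=0$ (written out using the Lagrangian symmetry $h^{p^{\ast}}_{ij}=h^{i^{\ast}}_{pj}$) one gets a linear system whose solution is $\bar h^{p^{\ast}}_{ijk}=0$ except in an explicit degenerate subcase ($\bar h^{1^{\ast}}_{11}=3\bar h^{1^{\ast}}_{22}$, $\bar h^{2^{\ast}}_{11}=0$, where $S\le\tfrac32$ directly). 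Only then does (2.25), evaluated in the limit, yield the algebraic inequality $S(1-\tfrac12S)\ge 0$. Your route never produces the vanishing of the tangential projection $\langle X,e_i\rangle$, which is exactly what makes the linear algebra and hence the vanishing of $\bar h^{p^{\ast}}_{ijk}$ work; without it, the proposal as written does not prove $S\le 2$.
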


\begin{proof}
Since $S$ is constant, from the Gauss equations, we know that the Ricci curvature of $X: M^2\to \mathbb{R}^{4}$ is bounded from below.
We can apply the generalized maximum principle for $\mathcal L$-operator
to the function $-|X|^2$. Thus, there exists a sequence $\{p_m\}$ in $M^2$ such that
\begin{equation*}
\lim_{m\rightarrow\infty} |X|^2(p_m)=\inf |X|^2,\quad
\lim_{m\rightarrow\infty} |\nabla |X|^2(p_m)|=0,\quad
\liminf_{m\rightarrow\infty}\mathcal{L} |X|^2(p_m)\geq 0.
\end{equation*}
Since $|\nabla |X|^2|^2=4\sum_{i=1}^2\langle X, e_i\rangle^2$ and
\begin{equation*}
\dfrac12\mathcal L|X|^2=2-|X|^2,
\end{equation*}
we  have
\begin{equation}
\lim_{m\to\infty}\sum_j\langle X, e_j\rangle^2(p_m)=0 \ \ \text{\rm and} \ \ 2-\inf |X|^2\geq 0.
\end{equation}
Since $X: M^2\to \mathbb{R}^{4}$ is a self-shrinker, we know
\begin{equation}
H^{p^{\ast}}_{,i}=\sum_kh^{p^{\ast}}_{ik} \langle X,e_k \rangle, \quad  i, p=1,  2.
\end{equation}
 From the definition of the self-shrinker, (4.1)  and (4.2), we get
\begin{equation}
\inf|X|^2=\lim_{m\rightarrow\infty}H^2(p_m)\leq 2, \quad \lim_{m\rightarrow\infty}|\nabla^{\perp} \vec H|^2(p_m)=0.
\end{equation}
Since $S=\sum_{i, j, p^{\ast}}(h^{p^{\ast}}_{ij})^2$ is constant, from (2.25) in Lemma 2.1, we know
$\{h^{p^{\ast}}_{ij}(p_m)\}$ and $\{h^{p^{\ast}}_{ijl}(p_m)\}$ are bounded sequences for any $ i, j, l, p$.
Thus, we can assume
$$
\lim_{m\rightarrow\infty}h^{p^{\ast}}_{ijl}(p_m)=\bar h^{p^{\ast}}_{ijl},
\quad \lim_{m\rightarrow\infty}h^{p^{\ast}}_{ij}(p_m)=\bar h^{p^{\ast}}_{ij},
$$
for $i, j, l, p=1, 2$.

\noindent
Therefore, we have
\begin{equation}
\bar h^{p^{\ast}}_{11j}+\bar h^{p^{\ast}}_{22j}=0, \ \ \text{\rm for} \  j, p=1, 2
\end{equation}
and
$$
\sum_{i,j,p}h^{p^{\ast}}_{ij}h^{p^{\ast}}_{ijk}=0, \ \ \text{for } \ k=1, 2,
$$
because of $S$ constant.
Since $X: M^2\to \mathbb{R}^{4}$ is a Lagrangian self-shrinker,
$$
h^{1^{\ast}}_{11}h^{1^{\ast}}_{11k}+3h^{1^{\ast}}_{12}h^{1^{\ast}}_{12k}
+3h^{2^{\ast}}_{12}h^{2^{\ast}}_{12k}+h^{2^{\ast}}_{22}h^{2^{\ast}}_{22k}=0, \ \ \text{for } \ k=1, 2
$$
holds. Thus, we conclude
\begin{equation}
\bar h^{1^{\ast}}_{11}\bar h^{1^{\ast}}_{11k}+3\bar h^{1^{\ast}}_{12}\bar h^{1^{\ast}}_{12k}
+3\bar h^{2^{\ast}}_{12}\bar h^{2^{\ast}}_{12k}+\bar h^{2^{\ast}}_{22}\bar h^{2^{\ast}}_{22k}=0, \ \ \text{for } \ k=1, 2.
\end{equation}
If $\lim_{m\rightarrow\infty}H^2(p_m)=0$,  we get
$$
\bar h^{1^{\ast}}_{11}+\bar h^{1^{\ast}}_{22}=0, \   \ \ \bar h^{2^{\ast}}_{11}+\bar h^{2^{\ast}}_{22}=0.
$$
Consequently, from (4.4) and (4.5), we have the following equations for $ k=1, 2$,
\begin{equation*}
\begin{cases}
&\bar h^{1^*}_{11k}+\bar h^{1^*}_{22k}=0,\\
&\bar h^{2^*}_{11k}+\bar h^{2^*}_{22k}=0,\\
&4\bar h^{1^{\ast}}_{11}\bar h^{1^*}_{11k}+4\bar h^{2^{\ast}}_{11}\bar h^{2^*}_{11k}=0.
\end{cases}
\end{equation*}
Hence, we obtain $S=0$ or  $\bar h^{p^{\ast}}_{ijk}=0$  for any $i, j, k$ and  $p$.
According to (2.25) in Lemma 2.1, we have
$S=0$ or  $S=\dfrac23$.

\noindent
If $\lim_{m\rightarrow\infty}H^2(p_m)=\bar H^2\neq 0$, without loss of the generality, at each point $p_m$, we choose $e_1$, $e_2$
 such that
$$
\vec H=H^{1^{\ast}}e_{1^{\ast}}.
$$
Then we have
$$
\bar h^{1^*}_{11}+ \bar h^{1^*}_{22}=\bar H,  \ \ \bar h^{2^{\ast}}_{11}+\bar h^{2^{\ast}}_{22}=0
$$
and
\begin{equation}
\begin{cases}
&\bar h^{1^*}_{11k}+\bar h^{1^*}_{22k}=0,\\
&\bar h^{2^*}_{11k}+\bar h^{2^*}_{22k}=0,\\
&(\bar h^{1^*}_{11}-3\bar h^{1^*}_{22})\bar h^{1^*}_{11k}+4\bar h^{2^{\ast}}_{11}\bar h^{2^*}_{11k}=0.
\end{cases}
\end{equation}
If $\bar h^{1^{\ast}}_{11}=3\bar h^{1^{\ast}}_{22}$ and $\bar h^{2^{\ast}}_{11}=0$, we know
$$
\lim_{m\rightarrow\infty}H^2(p_m)=(\bar \lambda_1+\bar \lambda_2)^2=16\bar \lambda_2^2\leq 2 \ \text{\rm and } \
S=12\bar \lambda_2^2\leq \dfrac32.
$$
If $\bar h^{1^{\ast}}_{11}\neq 3\bar h^{1^{\ast}}_{22}$ or  $\bar h^{2^{\ast}}_{11}\neq 0$, we have
$\bar h^{p^{\ast}}_{ijk}=0$  for any $i, j, k, p$ from (4.6).  Thus, from (2.25) in Lemma 2.1, we get

\begin{equation*}
\begin{aligned}
&0=S(1-\dfrac32S)+2\bar H^2S-\dfrac12\bar H^4-\bar H^2(\bar \lambda_1^2+\bar\lambda_2^2+2\bar\lambda^2 )\\
&=S(1-\dfrac12S)-(S-\bar H^2)^2-\dfrac12\bar H^2(\bar \lambda_1-\bar\lambda_2)^2-2\bar H^2\bar\lambda^2.
\end{aligned}
\end{equation*}
Then we conclude
$$
S\leq 2.
$$
This completes the proof of Theorem 1.1.
\end{proof}

\vskip2mm
\noindent
Since $S$ is constant, from the result of Cheng and Peng in \cite{CP}, we know that  $S=0$ or $S=1$ if $S\leq 1$ .
Thus, we  only need to prove the  following
\begin{theorem}
There are no 2-dimensional complete   Lagrangian self-shrinkers  $X: M^2\to \mathbb{R}^{4}$
with constant squared norm $S$ of the second fundamental form and $1<S< 2$.
\end{theorem}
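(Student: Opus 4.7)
The strategy is to combine the generalized maximum principle of Lemma 2.6 with the algebraic identities and inequality of Lemmas 2.1--2.5 to rule out the assumption $1<S<2$ with $S$ constant. Since $S$ is constant, Gauss' equation gives Ricci curvature equal to $Kg$ with $K=\tfrac12(H^2-S)$, and $H^2\leq 2S$ by Cauchy--Schwarz on $H^{p^*}=\sum_i h^{p^*}_{ii}$, so $K$ is bounded below and Lemma 2.6 applies. Moreover, $\mathcal LS=0$ together with Lemma 2.1 shows that $\sum(h^{p^*}_{ijk})^2$ is bounded above. A preliminary step rules out $\vec H\equiv 0$: otherwise the self-shrinker equation forces $X^\perp=0$, so $M$ would have to be a linear subspace of $\mathbb R^4$, giving $S=0$, a contradiction.

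The plan is to apply Lemma 2.6 to two bounded functions. Applied to $H^2$, it yields a sequence $p_m$ with $H^2(p_m)\to\sup H^2>0$, $|\nabla H^2|(p_m)\to 0$, and $\limsup\mathcal LH^2(p_m)\leq 0$. Passing to a subsequence, extract limits $\bar h^{p^*}_{ij}$ and $\bar h^{p^*}_{ijk}$. Choose the Lagrangian frame with $\vec H=He_{1^*}$ and set $\lambda_1=h^{1^*}_{11}$, $\lambda_2=h^{1^*}_{22}$, $\lambda=h^{1^*}_{12}$, so $H=\lambda_1+\lambda_2$ and $S=\lambda_1^2+3\lambda_2^2+4\lambda^2$. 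A direct computation gives $\sum H^{p^*}h^{p^*}_{ij}H^{q^*}h^{q^*}_{ij}=H^2(S-2\lambda_2^2-2\lambda^2)$, and the formula of Lemma 2.1 for $\tfrac12\mathcal LH^2$ yields in the limit
\begin{equation*}
\bar H^2\bigl(S-1-2\bar\lambda_2^2-2\bar\lambda^2\bigr)\geq\lim|\nabla^\perp\vec H|^2\geq 0.
\end{equation*}
Applied next to $-\sum(h^{p^*}_{ijk})^2$, Lemma 2.6 produces a sequence $q_m$ along which $\sum(h^{p^*}_{ijk})^2(q_m)\to\inf$ and $\liminf\mathcal L\sum(h^{p^*}_{ijk})^2(q_m)\geq 0$. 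If this infimum is zero, Lemma 2.5 applies at the limit, giving an explicit polynomial inequality in $(\bar H,\bar\lambda_1,\bar\lambda_2,\bar\lambda)$; if it is strictly positive, one instead applies Lemma 2.6 to $\sum(h^{p^*}_{ijk})^2$ itself and invokes Lemma 2.4's lower bound on $\sum(h^{p^*}_{ijkl})^2$, which via the Ricci identity (2.10) contributes explicit $\bar K^2$-type terms in the adapted frame.

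The main obstacle is the polynomial case analysis that follows. The limit data $(\bar H,\bar\lambda_1,\bar\lambda_2,\bar\lambda)$ must simultaneously satisfy the algebraic relations $\bar H=\bar\lambda_1+\bar\lambda_2$, $S=\bar\lambda_1^2+3\bar\lambda_2^2+4\bar\lambda^2$, $\bar H\bar\lambda_1=\bar K+\bar\lambda_1^2+\bar\lambda_2^2+2\bar\lambda^2$, and the two maximum-principle inequalities above. Splitting into cases according to whether $\bar H$ and $\bar\lambda$ vanish, one has to show that no solution exists when $1<S<2$. The subtlest case is $\bar H=0$ along some subsequence, which I would handle by applying Lemma 2.6 to $|X|^2$ as in the proof of Theorem 4.1: this gives $\sum_i\langle X,e_i\rangle^2\to 0$, hence $|\nabla^\perp\vec H|^2\to 0$, and pins down enough components of $\bar h^{p^*}_{ijk}$ that the constancy of $S$ is only compatible with $S\in\{0,\tfrac23,\tfrac32,2\}$, none of which lies in the open interval $(1,2)$.
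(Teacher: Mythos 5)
Your overall framework coincides with the paper's (generalized maximum principle applied to $H^2$, the identities of Lemmas 2.1--2.5, and a case analysis in the adapted frame with $\vec H=He_{1^*}$), and the preliminary computations you record are correct: $\sum_{p,q}H^{p^*}h^{p^*}_{ij}H^{q^*}h^{q^*}_{ij}=H^2(\lambda_1^2+\lambda_2^2+2\lambda^2)$ and, at the supremum of $H^2$, the inequality $\bar H^2\bigl(S-1-2\bar\lambda_2^2-2\bar\lambda^2\bigr)\ge\lim|\nabla^\perp\vec H|^2\ge 0$. But the proposal stops exactly where the real work begins. The sentence ``one has to show that no solution exists when $1<S<2$'' is the theorem itself in disguise: the constraints you list ($\bar H=\bar\lambda_1+\bar\lambda_2$, $S=\bar\lambda_1^2+3\bar\lambda_2^2+4\bar\lambda^2$, the Gauss relation, and the two maximum-principle inequalities) are not by themselves contradictory for $1<S<2$, and no argument is offered that they are. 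What the paper actually needs is substantially more: (i) it first shows (Lemma 4.1, Proposition 4.1) that along the sup-$H^2$ sequence one may take $\bar\lambda=0$ and that either all limits $\bar h^{p^*}_{ijk}$ vanish with $S<\bar H^2\le 3S-2$, or $\bar\lambda_1=3\bar\lambda_2$ with $\bar H^2=\tfrac43S$; this vanishing of third derivatives is extracted from the linear systems generated by $\nabla S\equiv0$, $\nabla H^2\to0$ and $H^{p^*}_{,i}=\sum_k h^{p^*}_{ik}\langle X,e_k\rangle$, not from a separate maximum principle; (ii) it then kills both alternatives by delicate polynomial estimates: the quartic $f(t)=t^4-2St^3-6S(S-1)t^2+2S(2-3S)^2t-(2-3S)^2S^2$ on $[S,3S-2]$, the evaluation $\tfrac12\lim\mathcal L\sum(h^{p^*}_{ijk})^2=S^2(\tfrac23-\tfrac{17}{27}S)<0$ against the nonnegativity coming from (2.34), and the chain of functions $f_1,f_2,g,f_3$ forcing $1.89\le S<2$ and $S<\bar H^2<\tfrac65S$ before the final contradiction, which uses the curvature terms $3\lambda_2K$ and $(\lambda_1-2\lambda_2)K$ obtained from Lemma 2.4 and the Ricci identities. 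None of this is sketched, and it is not a routine verification.

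There is also a structural problem in how you propose to bring in Lemmas 2.4 and 2.5: you extract one sequence $p_m$ realizing $\sup H^2$ and a second sequence $q_m$ realizing $\inf\sum(h^{p^*}_{ijk})^2$, but the hypotheses of Lemma 2.5 ($\vec H\ne0$ and $\sum(h^{p^*}_{ijk})^2=0$) must hold together with the sup-$H^2$ information at one and the same limit; your two-sequence scheme does not deliver that, whereas the paper obtains the vanishing of the third derivatives along the sup-$H^2$ sequence itself. Finally, the case ``$\bar H=0$ along some subsequence'' does not arise for the sup-$H^2$ sequence once $\sup H^2>0$ (which follows from $S\ge1$), and your claim that constancy of $S$ would then only allow $S\in\{0,\tfrac23,\tfrac32,2\}$ is asserted rather than proved; the analogous computation in Theorem 4.1 is carried out at the infimum of $|X|^2$ and yields bounds such as $S\le2$, not a discrete list excluding the open interval $(1,2)$. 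As written, the proposal is a plan whose decisive quantitative step --- the elimination of every admissible limit configuration for $1<S<2$ --- is missing.
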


\noindent
The following lemma is key in this paper.
\begin{lemma} If $X: M^2\to \mathbb{R}^{4}$ is a 2-dimensional  complete  Lagrangian self-shrinker
in $\mathbb R^4$ with $S=$constant
and  $1\leq S\leq 2$, there exists a sequence $\{p_m\}$ in $M$  such that
$$
\lim_{m\rightarrow\infty} H^2(p_m)=\sup H^2, \quad
\lim_{m\rightarrow\infty}h^{p^{\ast}}_{ijl}(p_m)=\bar h^{p^{\ast}}_{ijl},
\quad \lim_{m\rightarrow\infty}h^{p^{\ast}}_{ij}(p_m)=\bar h^{p^{\ast}}_{ij},
$$
for $i, j, l,p=1, 2$,  and one  can choose an orthonormal frame $e_1, e_2$ at $p_m$
such that
$\bar \lambda=\bar h^{1^{\ast}}_{12}=0$.
\end{lemma}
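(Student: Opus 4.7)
The plan is to apply the generalized maximum principle of Cheng--Peng (the preceding Lemma) to $f=H^{2}$. Since $S$ is constant, $H^{2}\le nS=2S$ so $f$ is bounded above, and the Gauss equations give $K=\tfrac12(H^{2}-S)$, so the Ricci tensor of $M^{2}$ equals $K\delta_{ij}$ and is uniformly bounded below. The generalized maximum principle therefore produces a sequence $\{p_{m}\}\subset M^{2}$ with $H^{2}(p_{m})\to\sup H^{2}$, $|\nabla H^{2}|(p_{m})\to 0$, and $\limsup_{m\to\infty}\mathcal{L}H^{2}(p_{m})\le 0$.

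Next I would extract a convergent subsequence of every component $h_{ij}^{p^{\ast}}(p_{m})$ and $h_{ijk}^{p^{\ast}}(p_{m})$. The first family is bounded because each entry is at most $\sqrt{S}$; for the second family one uses $\mathcal{L}S\equiv 0$ together with the identity for $\tfrac12\mathcal{L}S$ in Lemma~2.1, which rearranges to
\[
\sum_{i,j,k,p}(h_{ijk}^{p^{\ast}})^{2}=\tfrac12 H^{4}-2H^{2}S-S(1-\tfrac32 S)+\sum_{j,k,p,q}H^{p^{\ast}}h_{jk}^{p^{\ast}}H^{q^{\ast}}h_{jk}^{q^{\ast}},
\]
whose right--hand side is a bounded polynomial in $H^{2}\le 2S$ and the entries of the second fundamental form. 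A diagonal subsequence argument then produces all the limits $\bar h_{ij}^{p^{\ast}}$ and $\bar h_{ijk}^{p^{\ast}}$.

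The heart of the matter is arranging $\bar\lambda=0$ by a suitable choice of frame. The Lagrangian identity $h_{ij}^{p^{\ast}}=h_{pj}^{i^{\ast}}$, combined with symmetry in $(i,j)$, shows that $T_{pij}:=h_{ij}^{p^{\ast}}$ is totally symmetric in its three indices. In dimension two this encodes $T$ as the binary cubic form
\[
\Phi(x,y)=T_{111}x^{3}+3T_{112}x^{2}y+3T_{122}xy^{2}+T_{222}y^{3}.
\]
Because $e_{p^{\ast}}=Je_{p}$, a single angle $\theta$ acts simultaneously on $\{e_{i}\}$ and $\{e_{p^{\ast}}\}$, so $\Phi$ transforms by the standard pullback through $R_{\theta}\in SO(2)$. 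Since $R_{\pi}=-I$ sends $\Phi$ to $-\Phi$, the coefficient of $x^{2}y$ in the rotated cubic is a continuous function of $\theta$ equal to $T_{112}$ at $\theta=0$ and to $-T_{112}$ at $\theta=\pi$. By the intermediate value theorem there is some $\theta_{m}\in[0,\pi]$ for which the rotated coefficient vanishes at $p_{m}$, i.e. $h_{12}^{1^{\ast}}(p_{m})=0$ in the new frame. Performing this rotation at every $p_{m}$ and passing to a further subsequence (possible since $SO(2)$ is compact) gives $\bar\lambda=\lim_{m}h_{12}^{1^{\ast}}(p_{m})=0$.

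The main obstacle is the last step: proving that at any point one can always rotate to kill $h_{12}^{1^{\ast}}$. The resolution exploits the special feature of the Lagrangian setting---that one rotation matrix in $SO(2)$ acts simultaneously on tangent and normal frames---so that $h_{ij}^{p^{\ast}}$ becomes a totally symmetric $3$-tensor, encoded as a binary cubic form to which the intermediate value theorem applies. Once this observation is made, the remaining checks---that the rotation preserves the boundedness of the components and is compatible with the frame-independent conclusions on $H^{2}$ and $|\nabla H^{2}|$---are routine.
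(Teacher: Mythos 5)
Your first two steps (applying the Cheng--Peng generalized maximum principle to $H^2$, and extracting convergent subsequences of $h^{p^{\ast}}_{ij}(p_m)$ and $h^{p^{\ast}}_{ijk}(p_m)$ by rearranging $\tfrac12\mathcal{L}S=0$) coincide with the paper's argument and are fine. The gap is in the decisive last step. In this paper $\lambda=h^{1^{\ast}}_{12}$ is defined in the frame adapted to the mean curvature vector: at each $p_m$ one takes $e_{1^{\ast}}=\vec H/|\vec H|$, so that $H^{1^{\ast}}=H$ and $H^{2^{\ast}}=h^{2^{\ast}}_{11}+h^{2^{\ast}}_{22}=0$ (see the conventions stated just before Lemma 2.5), and this is exactly how the lemma is used afterwards, where the relations $\bar H=\bar\lambda_1+\bar\lambda_2$, $S=\bar\lambda_1^2+3\bar\lambda_2^2$ and $\bar h^{1^{\ast}}_{ij}=\bar\lambda_i\delta_{ij}$ are required. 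Your rotation argument (total symmetry of $h^{p^{\ast}}_{ij}$, binary cubic form, intermediate value theorem) does produce \emph{some} frame in which $h^{1^{\ast}}_{12}(p_m)=0$, but a single rotation angle cannot in general achieve $h^{1^{\ast}}_{12}=0$ and $H^{2^{\ast}}=0$ simultaneously; in your frame $H^{2^{\ast}}\neq0$ in general, the identities $\bar H=\bar\lambda_1+\bar\lambda_2$ and $S=\bar\lambda_1^2+3\bar\lambda_2^2$ fail, and what you prove is a weaker, purely algebraic normalization that cannot play the role this lemma plays in Propositions 4.1--4.3 and Theorem 4.4.

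In the paper, $\bar\lambda=0$ in the $\vec H$-adapted frame is a genuinely analytic fact, not a choice: assuming $\bar\lambda\neq0$, one combines $H^{p^{\ast}}_{,i}=\sum_k h^{p^{\ast}}_{ik}\langle X,e_k\rangle$, $\nabla S=0$, and the maximum-principle limits $\bar H^{1^{\ast}}_{,k}=0$ to solve linear systems for the $\bar h^{p^{\ast}}_{ijk}$, and then rules out each of the three cases $\bar\lambda_2=0$, $\bar\lambda_1=0$, $\bar\lambda_1\bar\lambda_2\neq0$; the last case needs the fourth-order identities of Lemmas 2.3 and 2.5, the estimate of Lemma 2.4, and the sign analysis of the quartic $f(t)=t^4-2St^3-6S(S-1)t^2+2S(2-3S)^2t-(2-3S)^2S^2$ on $[S,3S-2]$, where the hypothesis $1\leq S\leq 2$ enters through $f(S)\leq0$, $f(3S-2)\leq0$, $f'(S)\leq0$. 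That your key step never invokes the self-shrinker equation, the constancy of $S$ beyond boundedness, or the hypothesis $1\leq S\leq 2$ is itself a signal that it cannot be proving the intended statement.
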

\begin{proof}
\noindent
From (2.25) and (2.26) in Lemma 2.1, we have
\begin{equation*}
\begin{aligned}
\dfrac12\mathcal{L}H^2
&=|\nabla^{\perp} \vec H|^2+H^2-\sum_{i,j,k,p}(h^{p^{\ast}}_{ijk})^2-S(1-\dfrac32S) -2H^2S+\dfrac12H^4\\
&=|\nabla^{\perp} \vec H|^2-\sum_{i,j,k,p}(h^{p^{\ast}}_{ijk})^2+\dfrac12(H^2-S)(H^2-3S+2).
\end{aligned}
\end{equation*}
If,  at  $p\in M$, $H=0$, we  have $H^2<S$. If $H\neq 0$ at $p\in M$, we choose $e_1$, $e_2$ such that
$$
\vec H=H^{1^{\ast}}e_{1^{\ast}}.
$$
From $2ab\leq \epsilon a^2+\dfrac1{\epsilon}b^2$, we obtain
$$
S=\lambda_1^2 +3\lambda_2^2+4\lambda^2, \ \ H^2=(\lambda_1+\lambda_2)^2\leq \dfrac43(\lambda_1^2 +3\lambda_2^2)\leq \dfrac43S,
$$
where we denote  $ \lambda_1= h^{1^{\ast}}_{11}$, $ \lambda_2= h^{1^{\ast}}_{22}$
and $\lambda=h^{1^{\ast}}_{12}$.
Hence, we have on $M$
$$
H^2\leq \dfrac43S
$$
and the equality holds if and only if $\lambda_1=3\lambda_2$
and  $\lambda=0$.
Thus, by applying  the generalized maximum principle of Cheng and Peng \cite{CP} to $H^2$,
there exists a sequence $\{p_m\}$ in $M^2$ such that
\begin{equation*}
\lim_{m\rightarrow\infty} H^2(p_m)=\sup H^2,\quad
\lim_{m\rightarrow\infty} |\nabla H^2(p_m)|=0,\quad
\limsup_{m\rightarrow\infty}\mathcal{L}H^2(p_m)\leq 0.
\end{equation*}
Since $X: M^2\to \mathbb{R}^{4}$ is a self-shrinker, we have
\begin{equation}
H^{p^{\ast}}_{,i}=\sum_k h^{p^{\ast}}_{ik} \langle X,e_k \rangle, \quad i, p=1, 2.
\end{equation}
According to  $1\leq  S\leq 2$, we know $\sup H^2>0$.  Hence, without loss of the generality,
at each point $p_m$, we can assume $H(p_m)\neq 0$  and  choose $e_1$, $e_2$  such that
$$
\vec H=H^{1^{\ast}}e_{1^{\ast}}.
$$
From (2.25) in Lemma 2.1, Lemma 2.3  and the definition of $S$, we know that
$\{h^{p^{\ast}}_{ij}(p_m)\}$,  $\{h^{p^{\ast}}_{ijl}(p_m)\}$ and $\{h^{p^{\ast}}_{ijkl}(p_m)\}$, for any $ i, j, k, l, p$, are bounded sequences.
We can assume
$$
\lim_{m\rightarrow\infty}h^{p^{\ast}}_{ijl}(p_m)=\bar h^{p^{\ast}}_{ijl}, \quad \lim_{m\rightarrow\infty}h^{p^{\ast}}_{ij}(p_m)=\bar h^{p^{\ast}}_{ij},
\quad \lim_{m\rightarrow\infty}h^{p^{\ast}}_{ijkl}(p_m)=\bar h^{p^{\ast}}_{ijkl},
$$
for $i, j, k, l,p=1, 2$
\vskip1mm
\noindent
and get
\begin{equation}
\begin{cases}
\begin{aligned}
&\lim_{m\rightarrow\infty} H^2(p_m)=\sup H^2=\bar H^2,\quad
\lim_{m\rightarrow\infty} |\nabla H^2(p_m)|=0,\\
&0\geq
\lim_{m\rightarrow\infty} |\nabla^{\perp} \vec H|^2(p_m)-\sum_{i,j,k,p}(\bar h^{p^{\ast}}_{ijk})^2+\dfrac12(\bar H^2-S)(\bar H^2-3S+2).
\end{aligned}
\end{cases}
\end{equation}
From $\lim_{k\rightarrow\infty} |\nabla H^2(p_m)|=0$ and $|\nabla H^2|^2=4\sum_i(\sum_{p^*}H^{p^{\ast}}H^{p^{\ast}}_{,i})^2$, we have
\begin{equation}
\bar H^{1^*}_{,k}=0.
\end{equation}
From (4.7), we obtain
\begin{equation}
\begin{cases}
\begin{aligned}
&\bar \lambda_1\lim_{m\rightarrow\infty} \langle X,e_1\rangle(p_m)+\bar \lambda \lim_{m\rightarrow\infty} \langle X,e_2 \rangle(p_m)=0,\\
&\bar \lambda\lim_{m\rightarrow\infty} \langle X,e_1\rangle(p_m)
+\bar \lambda_2 \lim_{m\rightarrow\infty} \langle X,e_2 \rangle(p_m)=0.
\end{aligned}
\end{cases}
\end{equation}
We will then prove $\bar \lambda=0$.

\noindent
Let us assume $\bar \lambda \neq 0$, we will get a contradiction. The proof is  divided into three cases.

\vskip2mm
\noindent
{\bf Case 1: $ \bar \lambda_2=0$}.

\noindent Since $\bar H^2\neq 0$, we have $\bar \lambda_1\neq 0$. From (4.10), we get
$$
\lim_{m\rightarrow\infty} \langle X,e_1\rangle(p_m)=
\lim_{m\rightarrow\infty} \langle X,e_2 \rangle(p_m)=0.
$$
Thus, for  $k=1, 2$, from (4.5) and (4.7),
\begin{equation}
\begin{cases}
\begin{aligned}
&\bar h^{1^*}_{11k}+\bar h^{1^*}_{22k}=0, \\
&\bar h^{2^*}_{11k}+\bar h^{2^*}_{22k}=0,\\
&\bar \lambda_1\bar h^{1^{\ast}}_{11k}+4\bar \lambda \bar h^{2^{\ast}}_{11k}=0.
\end{aligned}
\end{cases}
\end{equation}
We  can draw a conclusion,  for any $i, j, k, p$,
$$
\bar h^{p^*}_{ijk}=0.
$$
From (4.8), we know $S\leq \bar H^2$, which is in contradiction to $S=\bar H^2+4\bar \lambda^2>\bar H^2$ .

\vskip2mm
\noindent
{\bf Case 2: $ \bar \lambda_1=0$}.

\noindent In this case, we have
$$
 \bar \lambda_2\neq 0,  \ \ \bar H^2=\bar \lambda_2^2, \ \
 S=3\bar \lambda_2^2+4\bar \lambda^2=3\bar H^2+4\bar \lambda^2.
 $$
From (4.10), we obtain
$$
\lim_{m\rightarrow\infty} \langle X,e_1\rangle(p_m)=
\lim_{m\rightarrow\infty} \langle X,e_2 \rangle(p_m)=0.
$$
Therefore, we infer
\begin{equation}
\begin{cases}
\begin{aligned}
&\bar h^{1^*}_{11k}+\bar h^{1^*}_{22k}=0, \\
&\bar h^{2^*}_{11k}+\bar h^{2^*}_{22k}=0,\\
&3\bar \lambda_2\bar h^{1^{\ast}}_{22k}+4\bar \lambda \bar h^{2^{\ast}}_{11k}=0.
\end{aligned}
\end{cases}
\end{equation}
By solving the above system of equations, we have  for any $i, j, k, p$,
$$
\bar h^{p^*}_{ijk}=0.
$$
From (4.8), we know
$$
 (\bar H^2-S)(\bar H^2-3S+2)=(2\bar H^2+ 4\bar \lambda^2)(2S-2+2\bar H^2+ 4\bar \lambda^2)\leq 0,
$$
it is impossible since $S\geq1$.
\vskip2mm
\noindent
{\bf Case 3: $ \bar \lambda_1\bar \lambda_2\neq 0$}.

\noindent From (4.10), we have
\begin{equation}
(\bar \lambda_1\bar\lambda_2 -\bar \lambda^2)\lim_{m\rightarrow\infty} \langle X,e_2 \rangle(p_m)=0.
\end{equation}
If $\bar \lambda_1\bar\lambda_2 = \bar \lambda^2$, we get, for $k=1, 2$,  in view of  (4.5) and (4.9),
\begin{equation*}
\begin{cases}
\begin{aligned}
&\bar h^{1^*}_{11k}+\bar h^{1^*}_{22k}=0, \\
&(\bar \lambda_1-3\bar \lambda_2)\bar h^{1^{\ast}}_{11k}
+3\bar \lambda \bar h^{2^{\ast}}_{11k}-\bar \lambda \bar h^{2^{\ast}}_{22k}=0.
\end{aligned}
\end{cases}
\end{equation*}
By solving the above system of equations, we have
$$
(\bar \lambda_1+3\bar \lambda_2)^2\bar h^{1^*}_{111}=-4\bar \lambda^2\bar h^{2^*}_{222}.
$$
Hence, we obtain
\begin{equation*}
\begin{aligned}
&\bar h^{1^*}_{111}=\dfrac{-4\bar \lambda^2}{(\bar \lambda_1+3\bar \lambda_2)^2}\bar h^{2^*}_{222},\\
&\bar h^{1^*}_{221}=-\bar h^{1^*}_{111},\\
&\bar h^{1^*}_{222}=-\bar h^{2^*}_{111}={\color{red}-}
\dfrac{\bar \lambda(\bar \lambda_1-3\bar \lambda_2)}{(\bar \lambda_1+3\bar \lambda_2)^2}\bar h^{2^*}_{222}.
\end{aligned}
\end{equation*}
Since
$$
\lim_{m\rightarrow\infty} |\nabla^{\perp} \vec H|^2(p_m)=(\bar h^{2^*}_{112}+\bar h^{2^*}_{222})^2=
\dfrac{(10 \bar \lambda^2+\bar \lambda_1^2+9\bar \lambda_2^2)^2}{(\bar \lambda_1+3\bar \lambda_2)^4}(\bar h^{2^*}_{222})^2
$$
and
$$
\sum_{i,j,k,p}(\bar h^{p^{\ast}}_{ijk})^2=7(\bar h^{1^*}_{111})^2+8(\bar h^{2^*}_{111})^2+(\bar h^{2^*}_{222})^2=
\dfrac{(10 \bar \lambda^2+\bar \lambda_1^2+9\bar \lambda_2^2)^2}{(\bar \lambda_1+3\bar \lambda_2)^4}(\bar h^{2^*}_{222})^2,
$$
 we get the following inequality from (4.8)
$$
(\bar H^2-S)(\bar H^2-3 S+2)\leq 0,
$$
that is,
$$
S\leq \bar H^2\leq 3S-2.
$$
It is impossible because of  $S=\bar \lambda_1^2+3\bar \lambda_2^2+4\bar \lambda^2>\bar \lambda_1^2+ \bar \lambda_2^2+2\bar \lambda^2=\bar H^2$.
Hence, we obtain  $\bar \lambda_1\bar\lambda_2 \neq  \bar \lambda^2$.

\noindent From (4.10) and (4.13), we have
$$
\lim_{m\rightarrow\infty} \langle X,e_2 \rangle(p_m)=\lim_{m\rightarrow\infty} \langle X,e_1 \rangle(p_m)=0.
$$
Thus, we know from (4.7)
$$
\bar H^{p^*}_{,k}=0,
$$
for any $k, p=1, 2$. Hence we infer
\begin{equation*}
\begin{cases}
&\bar h^{p^*}_{11k}+\bar h^{p^*}_{22k}=0,\\
&(\bar \lambda_1-3\bar \lambda_2)\bar h^{1^*}_{11k}+4\bar \lambda\bar h^{2^*}_{11k}=0.
\end{cases}
\end{equation*}
Through the above system, we have
$$
 \sum_{i,j,k,p}(\bar h^{p^{\ast}}_{ijk})^2=0.
$$
From  (4.8) and (2.25) in Lemma 2.1, we get

\begin{equation}
\begin{aligned}
&S\leq \bar H^2\leq 3S-2,\\
&S(1-\dfrac12S)-(S-\bar H^2)^2+\dfrac12\bar  H^4-\bar H^2(\bar \lambda_1^2+\bar\lambda_2^2+2\bar \lambda^2)=0.
\end{aligned}
\end{equation}


\noindent From Lemma 2.5 and taking limit,
\begin{equation*}
\aligned
0&\leq\sum_{i,j,k,l,p}(\bar h_{ijkl}^{p^{\ast}})^{2}
=\frac{1}{2}\lim_{m\rightarrow\infty} \mathcal{L}\sum_{i,j,k,p}( h_{ijk}^{p^{\ast}})^{2}(p_m)\\
&=\bar H^{2}\bigl[\bar H^{2}-2S+\frac{1}{2}\bar H^{4}-\bar K\bar H^{2}-\bar K^{2}\bigl]-\bar \lambda^2\bar H^{4}\\
&\ \ +\bar H^{2}(S+2-\frac{3}{2}\bar H^{2}-\bar \lambda_{1}^{2}
-\bar \lambda_{2}^{2}-2\bar \lambda^2)(\bar \lambda_{1}^{2}+\bar \lambda_{2}^{2}+2\bar\lambda^2)\\
&<\bar H^{2}\bigl[\bar H^{2}-2S+\frac{1}{2}\bar H^{4}-\bar K\bar H^{2}-\bar K^{2}\bigl]\\
&\ \ +\bar H^{2}(S+2-\frac{3}{2}\bar H^{2}-\bar \lambda_{1}^{2}
-\bar \lambda_{2}^{2}-2\bar \lambda^2)(\bar \lambda_{1}^{2}+\bar \lambda_{2}^{2}+2\bar\lambda^2).
\endaligned
\end{equation*}
According to (4.14), we have
\begin{equation*}
\aligned
&0<\bar H^{2}\bigl[\bar H^{2}-2S+\frac{1}{2}\bar H^{4}-\bar K\bar H^{2}-\bar K^{2}\bigl]\\
&\ \ + \biggl(S+2-\frac{3}{2}\bar H^{2}-\dfrac{1}{\bar  H^2}\bigl(S(1-\dfrac12S)-(S-\bar H^2)^2+\dfrac12\bar  H^4\bigl)\biggl)\\
&\ \ \times\biggl(S(1-\dfrac12S)-(S-\bar H^2)^2+\dfrac12\bar  H^4\biggl)\\
&=\dfrac{1}{4\bar  H^2}\biggl(\bar  H^8-2S\bar H^6-6S(S-1)\bar  H^4+2S(2-3S)^2\bar  H^2-(2-3S)^2S^2\biggl)\leq 0.
\endaligned
\end{equation*}
This  is a contradiction. In fact, we consider a function $f(t)$ defined by
\begin{equation}
\aligned
&f(t)=t^4-2St^3-6S(S-1)t^2+2S(2-3S)^2t-(2-3S)^2S^2,
\endaligned
\end{equation}
for $S\leq t\leq 3S-2$. Thus, we have
\begin{equation}
f^{'}(t)=4t^3-6St^2-12S(S-1)t+2S(2-3S)^2,\ \
f^{''}(t)=12(t^2-St-S(S-1)),
\end{equation}
$f^{''}(t)<0$ for  $t\in (S,\frac{S+\sqrt{S^2+4S(S-1)}}{2})$,  $f^{''}(t)>0$
for  $t\in (\frac{S+\sqrt{S^2+4S(S-1)}}{2}, 3S-2)$.  Hence, $f^{'}(t)$ is a decreasing function
for  $t\in (S,\frac{S+\sqrt{S^2+4S(S-1)}}{2})$ and $f^{'}(t)$ is an increasing function for  $t\in (\frac{S+\sqrt{S^2+4S(S-1)}}{2}, 3S-2)$.
According to
\begin{equation}
f(S)=2(S-1)(S-2)S^2\leq 0,\ \ f(3S-2)=2(3S-2)^2(S-1)(S-2)\leq 0,
\end{equation}
we conclude $f(t)\leq 0$ for $t\in (S, 3S-2)$  because of $f^{'}(S)=4S(S-1)(S-2)\leq 0$. Hence,
we obtain  $\bar \lambda=0$  and
$$
\bar h^{1^{*}}_{ij}=\bar \lambda_{i}\delta_{ij}, \ \ \bar H=\bar \lambda_1+\bar \lambda_2, \ \ S=\bar \lambda_1^2+3\bar \lambda_2^2.
$$
\end{proof}

\noindent
Since the proof of Theorem 4.2  is very long, we will divide the proof into three steps.

\noindent
In the first step, we prove the following:
\begin{proposition} If $X: M^2\to \mathbb{R}^{4}$ is a 2-dimensional  complete   Lagrangian self-shrinker
in $\mathbb R^4$ with $S=$constant
and  $1<S<2$, there exists a sequence $\{p_m\}$ in $M$ and at $p_m$, we can choose an orthonormal $e_1, e_2$ such that
$$
\lim_{m\rightarrow\infty}h^{p^{\ast}}_{ijl}(p_m)=\bar h^{p^{\ast}}_{ijl},
\quad \lim_{m\rightarrow\infty}h^{p^{\ast}}_{ij}(p_m)=\bar h^{p^{\ast}}_{ij},
$$
for $i, j, l,p=1, 2$,   $\bar \lambda =0$  and
the following holds, either
\begin{equation}
\begin{cases}
\begin{aligned}
& \sum_{i,j,k,p}(\bar h^{p^{\ast}}_{ijk})^2=0, \ \    \ \ \bar\lambda_1\bar\lambda_2\neq 0, \\
& S<\sup H^2=\bar H^2 \leq 3S-2 \ \ \text{\rm and } \ \  S<\sup H^2<\dfrac43S,
\end{aligned}
\end{cases}
\end{equation}
or
\begin{equation}
\begin{cases}
\begin{aligned}
&\bar \lambda_1=3\bar \lambda_2, \ \ \bar\lambda_1\bar\lambda_2\neq 0,
\ \ \bar h^{p^{\ast}}_{11k}+\bar h^{p^{\ast}}_{22k}=0\\
&\ \sup H^2=\dfrac43S, \ \  S{\color{red}\geq}\dfrac65,
\end{aligned}
\end{cases}
\end{equation}
for $k, p=1, 2$, where  we denote  $\bar \lambda_1=\bar h^{1^{\ast}}_{11}$, $\bar \lambda_2=\bar h^{1^{\ast}}_{22}$
and $\bar \lambda=\bar h^{1^{\ast}}_{12}$.
\end{proposition}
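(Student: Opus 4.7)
The plan is to build directly on Lemma 4.3, which already produces a sequence $\{p_m\}$ with $\lim_m H^2(p_m)=\sup H^2=:\bar H^2$, an adapted frame at $p_m$ with $\vec H = H e_{1^{\ast}}$, subsequential limits $\bar h^{p^{\ast}}_{ij},\bar h^{p^{\ast}}_{ijk}$, and the normalization $\bar\lambda = 0$. From that lemma one reads off the structural identities $\bar H=\bar\lambda_1+\bar\lambda_2$, $S=\bar\lambda_1^2+3\bar\lambda_2^2$, $\bar H^{1^{\ast}}_{,k}=0$, and, via $H^{p^{\ast}}_{,i}=\sum_k h^{p^{\ast}}_{ik}\langle X,e_k\rangle$, the relations $\bar\lambda_1\bar x_1=\bar\lambda_2\bar x_2=0$ with $\bar x_k:=\lim_m\langle X,e_k\rangle(p_m)$. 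Combining (2.25) and (2.26) with the generalized maximum principle applied to $H^2$ furnishes the master inequality
\begin{equation*}
0\geq \lim_m|\nabla^{\perp}\vec H|^2(p_m)-\sum_{i,j,k,p}(\bar h^{p^{\ast}}_{ijk})^2+\tfrac{1}{2}(\bar H^2-S)(\bar H^2-3S+2).
\end{equation*}

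I would first eliminate the degenerate cases $\bar\lambda_i=0$. If $\bar\lambda_2=0$, then $\bar H^2=S$ and $\lim_m|\nabla^\perp\vec H|^2(p_m)=0$ (since the only surviving component $\bar H^{2^{\ast}}_{,2}=\bar\lambda_2\bar x_1$ vanishes); combining $\bar H^{p^{\ast}}_{,k}=\bar h^{p^{\ast}}_{11k}+\bar h^{p^{\ast}}_{22k}=0$ with the $S$-constant identity $\bar\lambda_1\bar h^{1^{\ast}}_{11k}+3\bar\lambda_2\bar h^{1^{\ast}}_{22k}=0$ and the total symmetry of $h^{p^{\ast}}_{ijk}$ (from Codazzi together with the Lagrangian symmetry $h^{p^{\ast}}_{ijk}=h^{i^{\ast}}_{pjk}$) forces every $\bar h^{p^{\ast}}_{ijk}=0$; substituting into (2.25) yields $S(1-S)=0$, impossible for $1<S<2$. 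If $\bar\lambda_1=0$, then $\bar H^2=S/3$ and the master inequality (after using $\lim_m|\nabla^\perp\vec H|^2(p_m)=\bar\lambda_2^2\bar x_1^2$ and the expression for $\sum(\bar h^{p^{\ast}}_{ijk})^2$ from (2.25)) reduces to $\bar x_1^2+1\leq S/3$, impossible for $S<3$. Hence $\bar\lambda_1\bar\lambda_2\neq 0$, which forces $\bar x_1=\bar x_2=0$, and therefore $\lim_m|\nabla^{\perp}\vec H|^2(p_m)=0$ and $\bar h^{p^{\ast}}_{11k}+\bar h^{p^{\ast}}_{22k}=0$ for all $p,k$.

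The dichotomy is then controlled by whether $\bar\lambda_1=3\bar\lambda_2$ (equivalently $\bar H^2=\tfrac{4}{3}S$). In the case $\bar\lambda_1\neq 3\bar\lambda_2$, substituting $\bar h^{p^{\ast}}_{11k}+\bar h^{p^{\ast}}_{22k}=0$ into the $S$-constant identity yields $(\bar\lambda_1-3\bar\lambda_2)\bar h^{1^{\ast}}_{11k}=0$, and the total symmetry of $\bar h^{p^{\ast}}_{ijk}$ then propagates the vanishing to every one of its five independent components. The master inequality collapses to $(\bar H^2-S)(\bar H^2-3S+2)\leq 0$, giving $S\leq\bar H^2\leq 3S-2$; the strict inequality $S<\bar H^2$ is obtained by excluding $\bar\lambda_1=\bar\lambda_2$, since otherwise (2.25) with $\sum=0$ becomes $S(1-S/2)=0$, impossible for $1<S<2$. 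This establishes Case A. In the case $\bar\lambda_1=3\bar\lambda_2$ we automatically have $\bar\lambda_1\bar\lambda_2\neq 0$ and $\bar H^2=\tfrac{4}{3}S$; direct substitution into (2.25) yields $\sum_{i,j,k,p}(\bar h^{p^{\ast}}_{ijk})^2=\tfrac{S(5S-6)}{6}$, and nonnegativity of the left side forces $S\geq 6/5$. This gives Case B.

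The main obstacle I anticipate is the symmetry bookkeeping in Case A: recognizing that Codazzi together with the Lagrangian relation makes $h^{p^{\ast}}_{ijk}$ a totally symmetric $4$-tensor in $(p,i,j,k)$ with only five independent components, and then carefully chaining the four equations $\bar H^{p^{\ast}}_{,k}=0$, the two $S$-constant identities, and the hypothesis $\bar\lambda_1\neq 3\bar\lambda_2$ to kill every component. Once that vanishing structure is in place, the arithmetic collapses in (2.25) leading to $S(1-S)$, $S(1-S/2)$, and $\tfrac{S(5S-6)}{6}$ are routine.
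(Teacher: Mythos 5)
Your proposal is correct and follows essentially the same route as the paper: it builds on the key lemma giving the sequence with $\bar\lambda=0$, uses the identity $\tfrac12\mathcal{L}H^2=|\nabla^{\perp}\vec H|^2-\sum_{i,j,k,p}(h^{p^{\ast}}_{ijk})^2+\tfrac12(H^2-S)(H^2-3S+2)$ with the generalized maximum principle, derives the linear systems from $\bar H^{p^{\ast}}_{,k}=0$ and $\nabla S=0$ together with the total symmetry of $h^{p^{\ast}}_{ijk}$, and splits on $\bar\lambda_1=3\bar\lambda_2$ versus $\bar\lambda_1\neq3\bar\lambda_2$, recovering the same conclusions $S<\bar H^2\leq 3S-2$ (with $\sum(\bar h^{p^{\ast}}_{ijk})^2=0$) and $S\geq\tfrac65$ with $\sup H^2=\tfrac43S$. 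The only deviations are cosmetic: you dismiss the sub-case $\bar\lambda_1=0$ by a direct numerical evaluation of the master inequality (yielding $\bar x_1^2+1\leq S/3$) where the paper shows all third derivatives except $\bar h^{2^{\ast}}_{222}$ vanish and gets $S\leq\tfrac34$, and you spell out the strictness $S<\sup H^2$ (via $\bar\lambda_1=\bar\lambda_2\Rightarrow S=2$) which the paper leaves implicit.
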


\vskip2mm
\noindent
\begin{proof}
By making use of the same assertion as in the proof of Lemma 4.1,
there exists a sequence $\{p_m\}$ in $M^2$ such that
$$
\lim_{m\rightarrow\infty}h^{p^{\ast}}_{ijl}(p_m)=\bar h^{p^{\ast}}_{ijl}, \quad \lim_{m\rightarrow\infty}h^{p^{\ast}}_{ij}(p_m)=\bar h^{p^{\ast}}_{ij},
\quad \lim_{m\rightarrow\infty}h^{p^{\ast}}_{ijkl}(p_m)=\bar h^{p^{\ast}}_{ijkl},
$$
for $i, j, k, l,p=1, 2$ and
\begin{equation}
\begin{cases}
\begin{aligned}
&\lim_{m\rightarrow\infty} H^2(p_m)=\sup H^2=\bar H^2,\quad
\lim_{m\rightarrow\infty} |\nabla H^2(p_m)|=0,\\
&0\geq
\lim_{m\rightarrow\infty} |\nabla^{\perp} \vec H|^2(p_m)-\sum_{i,j,k,p}(\bar h^{p^{\ast}}_{ijk})^2+\dfrac12(\bar H^2-S)(\bar H^2-3S+2),
\end{aligned}
\end{cases}
\end{equation}
with $\bar \lambda=0$.
From $\lim_{k\rightarrow\infty} |\nabla H^2(p_m)|=0$ and $|\nabla H^2|^2=4\sum_i(\sum_{p^*}H^{p^{\ast}}H^{p^{\ast}}_{,i})^2$, we have
\begin{equation}
\bar H^{1^*}_{,k}=0.
\end{equation}
From (4.7) and $\bar \lambda=0$, we have
\begin{equation*}
\begin{aligned}
&\bar \lambda_1\lim_{m\rightarrow\infty} \langle X,e_1\rangle(p_m)=0, \ \
\bar \lambda_2 \lim_{m\rightarrow\infty} \langle X,e_2 \rangle(p_m)=0,
\end{aligned}
\end{equation*}
it means that,
$$
\bar \lambda_i\lim_{m\rightarrow\infty} \langle X,e_i \rangle(p_m)=0.
$$
According to
$S=\bar \lambda_1^2 +3\bar \lambda_2^2>1$  and  $\sup H^2=(\bar \lambda_1+\bar \lambda_2)^2$, if $\bar \lambda_2=0$,
we have
$$
\bar \lambda_1\neq 0,  \ \ S=\sup H^2, \ \
\bar H^{2^*}_{,k}=0
$$
because of  $H^{p^{\ast}}_{,i}=\sum_k h^{p^{\ast}}_{ik} \langle X,e_k \rangle$.
Hence, by using the same
calculations as in (4.6), we have
\begin{equation}
\begin{cases}
&\bar h^{1^*}_{11k}+\bar h^{1^*}_{22k}=0,\\
&\bar h^{2^*}_{11k}+\bar h^{2^*}_{22k}=0,\\
&\bar \lambda_1\bar h^{1^*}_{11k}=0.
\end{cases}
\end{equation}
Then we obtain
$$
 \sum_{i,j,k,p}(\bar h^{p^{\ast}}_{ijk})^2=0.
$$
From $ S=\sup H^2$  and (2.25), we get
$S=1$ or $S=0$. It is impossible.
If $\bar \lambda_1=0$,  we have
$$
\bar \lambda_2\neq 0,  \ \ S=3\sup H^2, \ \
\bar H^{2*}_{,1}=0.
$$
In this way, by using the same calculations as in (4.6), we get
\begin{equation*}
\begin{cases}
&\bar h^{1^*}_{11k}+\bar h^{1^*}_{22k}=0,\\
&\bar h^{2^*}_{111}+\bar h^{2^*}_{221}=0,\\
&3\bar \lambda_2\bar h^{1^*}_{22k}=0.
\end{cases}
\end{equation*}
So, we know
$$
\bar h^{p^{\ast}}_{ijk}=0, \ \ \text{\rm except} \ \ i=j=k=p^{\ast}=2
$$
and
$$
|\nabla^{\perp} \vec H|^2=\sum_{i,j,k,p}(\bar h^{p^{\ast}}_{ijk})^2 \ \ \text{\rm and}\ \  \dfrac12(H^2-S)(H^2-3S+2)\leq 0.
$$
Hence
$S\leq \dfrac34$. This is also impossible.
\vskip1mm

\noindent
We get  $\bar \lambda_1\bar \lambda_2\neq 0$.

\noindent Because of
$$
 H^{1^{\ast}}_{,i}=\sum_k h^{1^{\ast}}_{ik} \langle X,e_k \rangle, \ \ H^{2^{\ast}}_{,i}=\sum_k h^{2^{\ast}}_{ik} \langle X,e_k \rangle,
 $$
for $i=1, 2 $,
 we obtain $\lim_{m\rightarrow\infty} \langle X,e_i \rangle(p_m)=0$ from $\bar H^{1^*}_{,i}=0$ and
 $\bar \lambda_1\bar \lambda_2\neq 0$. Thus, we have $\bar H^{2^*}_{,i}=0$,
 then we get from (4.5),  for $i=1, 2 $,
\begin{equation}
\begin{cases}
&\bar h^{1^*}_{11i}+\bar h^{1^*}_{22i}=0,\\
&\bar h^{2^*}_{11i}+\bar h^{2^*}_{22i}=0,\\
&(\bar \lambda_1-3\bar \lambda_2)\bar h^{1^*}_{11i}=0.
\end{cases}
\end{equation}
If $\bar \lambda_1 \neq 3\bar \lambda_2$, we have
$$
 \sum_{i,j,k,p}(\bar h^{p^{\ast}}_{ijk})^2=0.
$$
Therefore, from (4.8) and (2.25), we get
$$
S < \sup H^2 \leq 3S-2.
$$
If $\bar \lambda_1 = 3\bar \lambda_2$, we have $\sup H^2=\dfrac43S$ and $\lim_{m\rightarrow\infty} |\nabla^{\perp} \vec H|^2(p_m)=0$.
From (2.25), we know
\begin{equation}
\begin{aligned}
\sum_{i,j,k,p}(\bar h^{p^{\ast}}_{ijk})^2
&=-S(1-\dfrac12S)+(S-\sup H^2)^2
-\dfrac12(\sup H^2)^2+\bar H^2(\bar \lambda_1^2+\bar\lambda_2^2)\\
&=(\frac56S-1)S.
\end{aligned}
\end{equation}
Hence, we get
$S\geq \dfrac65$.  When  $S= \dfrac65$,  from (4.24), we have
$\sum_{i,j,k,p}(\bar h^{p^{\ast}}_{ijk})^2=0$. It  finishes  the proof of Proposition 4.1.
\end{proof}

\vskip2mm
\noindent
In the step 2, we prove the following:
\begin{proposition} Under the assumptions of Proposition {\rm 4.1},
the formula {\rm (4.21)} in Proposition {\rm 4.1}  does not occur.
\end{proposition}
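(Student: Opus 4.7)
The plan is to rule out case (4.21) by computing the limit of $\sum_{i,j,k,l,p}(h^{p^{\ast}}_{ijkl})^{2}$ along the sequence $\{p_m\}$ produced by Proposition 4.1 and exhibiting a sign contradiction. The key observation is that Lemma 2.3 provides two distinct formulas (2.34) and (2.35) for $\tfrac12\,\mathcal{L}\sum_{i,j,k,p}(h^{p^{\ast}}_{ijk})^{2}$. Equating these two expressions isolates $\sum_{i,j,k,l,p}(h^{p^{\ast}}_{ijkl})^{2}$ as an explicit polynomial in the lower-order data $h^{p^{\ast}}_{ij}$, $h^{p^{\ast}}_{ijk}$, $H^{p^{\ast}}$, $H^{p^{\ast}}_{,i}$, $\nabla K$, $\nabla H^2$, and $|\nabla^{\perp}\vec H|^{2}$. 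Under case (4.21) every one of these quantities has a well-defined limit along $\{p_m\}$.

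First I would pin down the limit values. Normalize the frame at each $p_m$ so that $\vec H = H^{1^{\ast}}e_{1^{\ast}}$. Arguing as in the last part of the proof of Proposition 4.1, the hypotheses $\bar\lambda_{1}\bar\lambda_{2}\neq 0$ and $\bar\lambda = 0$ together with $\bar H^{1^{\ast}}_{,i}=0$ force $\lim_{m\to\infty}\langle X,e_k\rangle(p_m) = 0$, and hence $\bar H^{p^{\ast}}_{,i} = 0$ for all $p, i$. Consequently $\nabla H^{2}$, $\nabla K$, and $|\nabla^{\perp}\vec H|^{2}$ all converge to $0$ along $\{p_m\}$. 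The full symmetry of $h^{p^{\ast}}_{ijk}$ in the four indices $(p,i,j,k)$ (combining (2.2) with Codazzi (2.9)), together with the trace-free conditions $\bar h^{p^{\ast}}_{11k}+\bar h^{p^{\ast}}_{22k}=0$ of (4.21), reduces the limiting third fundamental form to two parameters $u = \bar h^{1^{\ast}}_{111}$ and $v = \bar h^{1^{\ast}}_{112}$ with $8(u^{2}+v^{2}) = \sum(\bar h^{p^{\ast}}_{ijk})^{2} = (\tfrac{5}{6}S - 1)S$.

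Next I would carry out the contraction. Setting $a=\bar\lambda_{2}$ so that $\bar\lambda_{1}=3a$, $\bar H^{2}=16a^{2}$, $S=12a^{2}$, $K=2a^{2}$, and exploiting the diagonal form of $(\bar h^{1^{\ast}}_{ij})$ and the purely off-diagonal form of $(\bar h^{2^{\ast}}_{ij})$, every cubic and quartic contraction appearing in (2.34) and (2.35) can be evaluated in closed form. All gradient terms vanish in the limit, and after cancellation I obtain
\begin{equation*}
\lim_{m\to\infty}\sum_{i,j,k,l,p}(h^{p^{\ast}}_{ijkl})^{2}(p_m) \;=\; V(S) \;:=\; -\frac{S}{27}\bigl(17S^{2}+27S-54\bigr).
\end{equation*}
On the interval $[\tfrac{6}{5},\,2]$ the quadratic $17S^{2}+27S-54$ equals $\tfrac{72}{25}$ at $S=\tfrac{6}{5}$ and is strictly increasing, so $V(S)<0$. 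However, Lemma 2.4 guarantees $\sum_{i,j,k,l,p}(h^{p^{\ast}}_{ijkl})^{2}\geq 0$ pointwise, so the limit must be non-negative. This is the desired contradiction.

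The main obstacle lies in the algebraic bookkeeping of the third step: both (2.34) and (2.35) contain many cubic and quartic traces of $h^{p^{\ast}}_{ij}$ and $h^{p^{\ast}}_{ijk}$, and one must verify that every $u^{2}$, $v^{2}$, $uv$ and every power of $a$ cancels in the expected pattern. The normalization of the frame together with the full symmetry of $\bar h^{p^{\ast}}_{ijk}$ and the reduction to the two parameters $u,v$ keeps the calculation tractable and ultimately collapses it to the explicit polynomial $V(S)$, from which the sign contradiction is immediate.
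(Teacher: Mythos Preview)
Your proposal is correct and follows essentially the same route as the paper. The paper evaluates the limit of $\tfrac12\mathcal L\sum(h^{p^\ast}_{ijk})^2$ along $\{p_m\}$ via (2.35) to obtain $S^2(\tfrac23-\tfrac{17}{27}S)<0$, and via (2.34) to obtain $\sum(\bar h^{p^\ast}_{ijkl})^2+2\sum(\bar h^{p^\ast}_{ijk})^2\geq 0$; your formula $V(S)=-\tfrac{S}{27}(17S^2+27S-54)$ is precisely the difference of these two expressions, so your contradiction and the paper's are the same one, merely repackaged.
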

\begin{proof}
If  the formula (4.19) holds, we have
$$
\bar \lambda=0, \ \  \bar \lambda_1 = 3\bar \lambda_2, \ \ S{\color{red}\geq}\dfrac65,
\ \  \sup H^2=\bar H^2=\dfrac43S
$$
and
\begin{equation}
\bar H_{,k}^{p^{\ast}}=\bar h^{p^{\ast}}_{11k}+\bar h^{p^{\ast}}_{22k}=0, \ \text{\rm for}\ k, p=1, 2.
\end{equation}
From (2.6) and (4.25), we have
\begin{equation*}
\begin{aligned}
 &\sum_{i,j}(\bar h^{1^{\ast}}_{ij1})^2=(\bar h^{1^{\ast}}_{111})^2+(\bar h^{1^{\ast}}_{221})^2+2(\bar h^{1^{\ast}}_{121})^2
 =2(\bar h^{1^{\ast}}_{111})^2+2(\bar h^{1^{\ast}}_{112})^2\\
 &\sum_{i,j}(\bar h^{2^{\ast}}_{ij1})^2=(\bar h^{2^{\ast}}_{111})^2+(\bar h^{2^{\ast}}_{221})^2+2(\bar h^{2^{\ast}}_{121})^2
 =2(\bar h^{1^{\ast}}_{111})^2+2(\bar h^{1^{\ast}}_{112})^2\\
 &\sum_{i,j,p}(\bar h^{p^{\ast}}_{ij1})^2
 =\sum_{p}\bigl[(\bar h^{p^{\ast}}_{111})^2+(\bar h^{p^{\ast}}_{221})^2+2(\bar h^{p^{\ast}}_{121})^2\bigl]
 =\sum_{p}\bigl[2(\bar h^{p^{\ast}}_{111})^2+2(\bar h^{p^{\ast}}_{112})^2\bigl]\\
 &\sum_{i,j,p}(\bar h^{p^{\ast}}_{ij2})^2
 =\sum_{p}\bigl[(\bar h^{p^{\ast}}_{112})^2+(\bar h^{p^{\ast}}_{222})^2+2(\bar h^{p^{\ast}}_{122})^2\bigl]
 =\sum_{p}\bigl[2(\bar h^{p^{\ast}}_{111})^2+2(\bar h^{p^{\ast}}_{112})^2\bigl]\\
 &\sum_{i,j,p}(\bar h^{p^{\ast}}_{ijk})^2
 =\sum_{i,j,p}(\bar h^{p^{\ast}}_{ij1})^2+\sum_{i,j,p}(\bar h^{p^{\ast}}_{ij2})^2
 =8(\bar h^{1*}_{111})^2+8(\bar h^{1*}_{112})^2.
\end{aligned}
\end{equation*}
From (2.25) in Lemma 2.1 and $\bar H^2=\dfrac43S$, we get
$$
\sum_{i,j,p^{\ast}}(\bar h^{p^{\ast}}_{ijk})^2=(\frac56S-1)S.
$$
Thus, we obtain
$$
\sum_{i,j}(\bar h^{1^{\ast}}_{ij1})^2=\sum_{i,j}(\bar h^{2^{\ast}}_{ij1})^2=\dfrac14(\frac56S-1)S
$$
and
$$
\sum_{i,j,p^{\ast}}(\bar h^{p^{\ast}}_{ij1})^2=\sum_{i,j,p^{\ast}}(\bar h^{p^{\ast}}_{ij2})^2=\dfrac12(\frac56S-1)S.
$$
Since
$$
\bar H\bar \lambda_1=\frac{\bar H^2-S}2+\bar\lambda_1^2+\bar\lambda_2^2=\dfrac{\bar H^2}2+\dfrac{S}3=S,
$$
$$
  \bar H(\bar\lambda_1^3+ \bar\lambda_2^3)=\bar H^2(\bar\lambda_1^2
  -\bar\lambda_1\bar\lambda_2+\bar\lambda_2^2)=\dfrac{7S^2}{9},
$$
according to (2.35) in Lemma 2.3, we get

\begin{equation}
\begin{aligned}
&\dfrac12\lim_{m\to \infty}\mathcal L\sum_{i,j,k,p^{\ast}}(h^{p^{\ast}}_{ijk})^2(p_m)\\
&=(\bar H^2-2S)\bar H^2+(\dfrac12\bar H^2+\dfrac S2+2)\bar H^2(\bar \lambda_1^2+\bar \lambda_2^2)\\
&\ \ -\dfrac{\bar H^2-S}2(\bar H^4+\bar H^3\bar \lambda_1)-\bar H^2(\bar \lambda_1^2+\bar \lambda_2^2)^2
-\bar H^3(\bar \lambda_1^3+\bar \lambda_2^3)+\bar H^2\sum_{i,j,k}(\bar h^{1^{\ast}}_{ijk})^2\\
\end{aligned}
\end{equation}
\begin{equation*}
\begin{aligned}&=(\bar H^2-2S)\bar H^2+(\dfrac12\bar H^2+\dfrac S2+2)\bar H^2\dfrac{5S}6\\
&\ \ -\dfrac{\bar H^2-S}2\bigl(\bar H^4+\bar H^2S\bigl)
-\bar H^2(\dfrac{5S}6)^2-\bar H^2\dfrac{7S^2}{9}+\bar H^2\sum_{i,j,k}(\bar h^{1^{\ast}}_{ijk})^2\\
&=S^2(\dfrac23-\dfrac{17}{27}S)<0.
\end{aligned}
\end{equation*}
On the other hand, from (2.34) in Lemma 2.3, we have
\begin{equation}
\begin{aligned}
&\dfrac12\lim_{m\to \infty}\mathcal L\sum_{i,j,k,p^{\ast}} (h^{p^{\ast}}_{ijk})^2(p_m)\\
&=\sum_{i,j,k,l,p^{\ast}}(\bar h^{p^{\ast}}_{ijkl})^2+(5\bar H^2-5S+2)\sum_{i,j,k,p^{\ast}}(\bar h^{p^{\ast}}_{ijk})^2\\
&\ \ -2\bar H\sum_{i,j,k,p^{\ast}}(\bar h^{p^{\ast}}_{ijk})^2\bar h_{kk}^{1^{\ast}}
-\bar H\sum_{i,j,k}(\bar h^{1*}_{ijk})^2\bar h_{kk}^{1^{\ast}}\\
&=\sum_{i,j,k,l,p^{\ast}}(\bar h^{p^{\ast}}_{ijkl})^2+2\sum_{i,j,k,p^{\ast}}(\bar h^{p^{\ast}}_{ijk})^2{\color{red}\geq} 0.
\end{aligned}
\end{equation}
Hence, we conclude that (4.26) is in contradiction to  (4.27). It  completes the proof of Proposition 4.2.
\end{proof}

\vskip2mm
\noindent
In the step 3, we prove the following:
\begin{proposition} Under the assumptions of Proposition {\rm  4.1},
the formula {\rm (4.20)} in Proposition {\rm 4.1}  does not occur either.
\end{proposition}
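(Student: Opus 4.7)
The plan is to evaluate $\tfrac{1}{2}\lim_m\mathcal{L}\sum_{i,j,k,p}(h^{p^\ast}_{ijk})^2(p_m)$ in two complementary ways and derive incompatible inequalities, mirroring Case~3 of the proof of Lemma~4.1 but in the situation where $\bar\lambda=0$, so that the strict inequality previously supplied by the $-\bar\lambda^2\bar H^4$ term must now come from elsewhere.

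Under the data of case (4.20) along the sequence $\{p_m\}$, one has $\bar\lambda=0$, $\bar\lambda_1\bar\lambda_2\neq 0$, $\bar\lambda_1\neq 3\bar\lambda_2$, $\bar h^{p^\ast}_{ijk}=0$ for all indices, and $S<\bar H^2\leq 3S-2$ with $1<S<2$. The condition $\bar\lambda_1\bar\lambda_2\neq 0$ forces $\lim_m\langle X,e_i\rangle(p_m)=0$ via (4.10), so in particular $\bar H^{p^\ast}_{,i}=0$ and $|\nabla^\perp\vec H|^2\to 0$. Plugging these vanishings into identity (2.34) of Lemma~2.5 and passing to the limit, every term except $\sum(h^{p^\ast}_{ijkl})^2$ carries a factor of $\bar h^{p^\ast}_{ijk}$, $K_{,l}$, or $\bar H^{p^\ast}_{,i}$, and hence drops, so
$$
\frac12\lim_m\mathcal{L}\sum_{i,j,k,p}(h^{p^\ast}_{ijk})^2(p_m)\;=\;\sum_{i,j,k,l,p}(\bar h^{p^\ast}_{ijkl})^2\;\geq\;0.
$$
Evaluating, on the other hand, Lemma~2.6 at the limit with $\bar\lambda=0$, and substituting $2\bar K=\bar H^2-S$ together with the relation
$$
\bar\lambda_1^2+\bar\lambda_2^2\;=\;\frac{1}{\bar H^2}\Bigl[S\bigl(1-\tfrac12 S\bigr)-(S-\bar H^2)^2+\tfrac12\bar H^4\Bigr]
$$
(an immediate consequence of $\mathcal{L}S=0$ at the limit via (2.25)), the same limit collapses—by the very computation already performed in Case~3 of Lemma~4.1—to $\tfrac{1}{4\bar H^2}f(\bar H^2)$, where $f$ is the polynomial of (4.15). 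The two evaluations therefore force $f(\bar H^2)\geq 0$.

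The remaining step is to show $f(\bar H^2)<0$, which gives the contradiction. Reusing the concave-then-convex analysis of $f$ established in the proof of Lemma~4.1, the strict range $1<S<2$ yields $f(S)=2S^2(S-1)(S-2)<0$, $f(3S-2)=2(3S-2)^2(S-1)(S-2)<0$, and $f'(S)=4S(S-1)(S-2)<0$. On the concave piece $[S,t_c]$, $f'$ is nonincreasing and already negative at $S$, so $f$ is strictly decreasing and hence strictly below $f(S)<0$; on the convex piece $[t_c,3S-2]$, $f$ lies strictly below the chord joining its two negative endpoint values. Consequently $f<0$ strictly throughout the whole closed interval $[S,3S-2]$, in particular at $\bar H^2$, contradicting $f(\bar H^2)\geq 0$. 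The main obstacle, beyond the bookkeeping that collapses the long expression of Lemma~2.6 into $\tfrac{1}{4\bar H^2}f(\bar H^2)$, lies in this final refinement: the strictness $1<S<2$ (as opposed to the closed range $1\leq S\leq 2$ of Lemma~4.1) is precisely what upgrades the previously known $f\leq 0$ on $[S,3S-2]$ to a strict inequality, thereby taking over the role that the $-\bar\lambda^2\bar H^4$ term played in Case~3 of Lemma~4.1.
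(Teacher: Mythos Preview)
Your argument is correct and considerably shorter than the paper's. The paper proves Proposition~4.3 through a chain of four auxiliary lemmas (Lemmas~4.2--4.5): it first pins down the branch $\bar\lambda_1=\tfrac{3\bar H-\sqrt{4S-3\bar H^2}}{4}$, $\bar\lambda_2=\tfrac{\bar H+\sqrt{4S-3\bar H^2}}{4}$, then uses the explicit lower bound of Lemma~2.4 on $\sum(\bar h^{p^\ast}_{ijkl})^2$ coming from Ricci identities, and finally runs two further single-variable analyses (functions $g$ and $f_3$) to squeeze $S$ and $\bar H^2$ into an empty range. Your route bypasses all of this by noticing that, once $\bar\lambda=0$, the computation already carried out in Case~3 of Lemma~4.1 gives the \emph{equality} $\sum(\bar h^{p^\ast}_{ijkl})^2=\tfrac{1}{4\bar H^2}f(\bar H^2)$, and that the strict hypothesis $1<S<2$ upgrades the paper's conclusion $f\leq 0$ on $[S,3S-2]$ to $f<0$, immediately contradicting $f(\bar H^2)\geq 0$. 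What the paper's longer approach buys is an explicit quantitative grip on $(\bar\lambda_1,\bar\lambda_2,\bar H^2)$, but for the bare existence contradiction your argument is sufficient and cleaner.

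Two minor points of hygiene: your references ``Lemma~2.6'' and ``(2.34) of Lemma~2.5'' should be Lemma~2.5 (equations~(2.49)--(2.50)) and Lemma~2.3 respectively. Also, on the convex piece $[t_c,3S-2]$ convexity gives $f$ \emph{on or below} the chord, not strictly below; the conclusion $f<0$ still follows because both chord endpoints $f(t_c)$ and $f(3S-2)$ are strictly negative, so the chord itself is strictly negative. Equivalently, since $f'(S)<0$ and $f'$ has a single minimum at $t_c$, $f$ is either monotone decreasing on the whole interval or has a unique interior minimum, and in either case $\max_{[S,3S-2]}f=\max\{f(S),f(3S-2)\}<0$.
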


\noindent
In this case, we  have $\sum_{i,j,k,p}(\bar h^{p^{\ast}}_{ijk})^2=0$, $\bar \lambda=0$ and $\bar \lambda_1\bar \lambda_2\neq 0$.

\vskip2mm
\noindent
Since $\bar H=\bar \lambda_1+\bar \lambda_2$ and S=$\bar \lambda_1^2+3\bar \lambda_2^2$,
we get
 $$
\bar \lambda_1=\dfrac{3\bar H\pm \sqrt{4S-3\bar H^2}}4, \ \quad
\bar \lambda_2=\dfrac{\bar H\mp \sqrt{4S-3\bar H^2}}4.
$$

\begin{lemma}
Under the assumptions of Proposition { \rm 4.1}, if
\begin{equation}
\begin{cases}
\begin{aligned}
& \sum_{i,j,k,p}(\bar h^{p^{\ast}}_{ijk})^2=0, \  \  \bar \lambda=0,    \ \ \bar\lambda_1\bar\lambda_2\neq 0, \\
& S<\sup H^2=\bar H^2 \leq 3S-2 \ \ \text{\rm and } \ \  S<\sup H^2<\dfrac43S
\end{aligned}
\end{cases}
\end{equation}
is satisfied, then
$$
\bar \lambda_1=\dfrac{3\bar H+ \sqrt{4S-3\bar H^2}}4, \ \quad
\bar \lambda_2=\dfrac{\bar H- \sqrt{4S-3\bar H^2}}4.
$$ do  not occur.
\end{lemma}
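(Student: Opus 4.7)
My plan is to derive a contradiction by computing $\tfrac{1}{2}\lim_{m\to\infty}\mathcal{L}\sum_{i,j,k,p}(h_{ijk}^{p^{*}})^{2}(p_m)$ in two ways. On one side, case (4.28) of Proposition 4.1 provides a sequence with $\bar H\neq 0$, $\sum(h_{ijk}^{p^{*}})^{2}(p_m)\to 0$, $|\nabla H^{2}|(p_m)\to 0$ and $|\nabla^{\perp}\vec H|^{2}(p_m)\to 0$. Passing to the limit in formula (2.34) of Lemma 2.3, all terms that are cubic or quartic in the $h_{ijk}^{p^{*}}$ vanish, as does $\langle\nabla K,\nabla H^{2}\rangle$, leaving
\[
\tfrac{1}{2}\lim_{m\to\infty}\mathcal{L}\sum_{i,j,k,p}(h_{ijk}^{p^{*}})^{2}(p_m)=\lim_{m\to\infty}\sum_{i,j,k,l,p}(h_{ijkl}^{p^{*}})^{2}(p_m)\ge 0.
\]

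On the other side, taking the limit in formula (2.35) of Lemma 2.3 and substituting $\bar\lambda=0$, $\bar H^{p^{*}}_{,k}=0$, $\bar h^{1^{*}}_{ij}=\bar\lambda_i\delta_{ij}$ and $\bar h^{2^{*}}_{ij}=0$ reduces the same limit to the explicit polynomial
\[
(\bar H^{2}-2S)\bar H^{2}+\tfrac{\bar H^{2}+S+4}{2}\bar H^{2}(\bar\lambda_1^{2}+\bar\lambda_2^{2})-K(\bar H^{4}+\bar H^{3}\bar\lambda_1)-\bar H^{2}(\bar\lambda_1^{2}+\bar\lambda_2^{2})^{2}-\bar H^{3}(\bar\lambda_1^{3}+\bar\lambda_2^{3}),
\]
with $K=(\bar H^{2}-S)/2$ (this is the analogue of formula (4.26) but without the specialization $\bar\lambda_1=3\bar\lambda_2$). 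Substituting the ``$+$'' values $\bar\lambda_1=(3\bar H+u)/4$, $\bar\lambda_2=(\bar H-u)/4$ with $u=\sqrt{4S-3\bar H^{2}}>0$, and then using constraint (4.14), which in this regime collapses to $\bar H^{3}u=4S-6S^{2}+6S\bar H^{2}-3\bar H^{4}$ (and which, because $\bar H^{3}u\ge 0$, carves out a subregion of $\{1<S<2,\ S<\bar H^{2}\le\min(4S/3,3S-2)\}$), eliminates $u$ and reduces the sign question to the strict negativity of a polynomial $F(S,\bar H^{2})$ along the admissible curve.

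The anticipated main obstacle is the polynomial analysis of $F(S,\bar H^{2})$. Following the endgame of the proof of Lemma 4.1 (the $f(t)$ argument), my plan is to first evaluate $F$ on the boundary lines $\bar H^{2}=S$ and $\bar H^{2}=\min(4S/3,3S-2)$, reducing to one-variable polynomial inequalities in $S\in(1,2)$ that can be settled by direct factorization, and then control the sign of $F$ in the interior via the sign of $\partial_{\bar H^{2}}F$ using convexity/monotonicity. The asymmetry between the ``$+$'' and ``$-$'' sign choices enters only through the linear-in-$\bar\lambda_1$ contribution $K\bar H^{3}\bar\lambda_1$ and through $\bar\lambda_1^{3}+\bar\lambda_2^{3}=\bar H(\bar H^{2}-3\bar\lambda_1\bar\lambda_2)$; both are captured by tracking the single combination $\bar H u$, which keeps the algebra manageable and isolates what distinguishes the two cases.
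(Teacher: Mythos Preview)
Your two-sided computation via Lemmas~2.3 (formulas (2.34) and (2.35)) is correct in principle, but it is heavier than what Lemma~4.2 actually requires, and the paper's proof is considerably simpler. The paper never invokes Lemma~2.3 here at all. Instead it observes that the single constraint coming from $\mathcal{L}S=0$, namely (2.25), already forces
\[
S\bigl(1-\tfrac32 S\bigr)+\tfrac32 S\bar H^{2}-\tfrac34\bar H^{4}-\tfrac14\bar H^{2}\sqrt{(4S-3\bar H^{2})\bar H^{2}}=0
\]
once the ``$+$'' values are substituted (your own formula $\bar H^{3}u=4S-6S^{2}+6S\bar H^{2}-3\bar H^{4}$ is four times this). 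Treating the left side as a function $f(x)$ of $x=\bar H^{2}$ on $(S,\min(\tfrac43 S,3S-2)]$, one checks $f'(x)>0$ and $f$ is negative at both endpoints (using $1<S<2$), so $f<0$ throughout and the constraint has no solution. In your language: the ``admissible curve'' you intend to analyze is empty, so there is nothing for $F$ to be evaluated on.

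Your route would ultimately succeed, since any attempt to parametrize the admissible set will uncover that the right side of $\bar H^{3}u=4S-6S^{2}+6S\bar H^{2}-3\bar H^{4}$ is negative while the left side is nonnegative; but the computation of $F$ from (2.35), the elimination of $u$, and the two-variable boundary analysis are all unnecessary overhead. The paper reserves exactly that heavier machinery (comparing $\sum(\bar h^{p^{*}}_{ijkl})^{2}\ge 0$ against the (2.35) polynomial) for the ``$-$'' case in Lemmas~4.4 and~4.5, where the constraint \emph{does} have solutions and a finer inequality is genuinely needed.
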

\begin{proof}
If
$$
\bar \lambda_1=\dfrac{3\bar H+ \sqrt{4S-3\bar H^2}}4, \ \quad
\bar \lambda_2=\dfrac{\bar H-\sqrt{4S-3\bar H^2}}4
$$
hold,
we have
\begin{equation}
\bar \lambda_1^2+\bar \lambda_2^2=\dfrac{\bar H^2+2S+ \sqrt{(4S-3\bar H^2)\bar H^2}}4.
\end{equation}
Due to $\bar \lambda_1\neq 3\bar \lambda_2$, we know $\bar H^2<\dfrac43S
$
and
$\dfrac{4S}3 \leq 3S-2$ if and only if $S\geq \dfrac65$.
According to (2.25) and  $\sum_{i,j,k,p}(\bar h^{p^{\ast}}_{ijk})^2=0$, we have
\begin{equation}
\begin{aligned}
S(1-\dfrac12S)-(S-\bar H^2)^2+\dfrac12(\bar  H^2)^2-\bar H^2(\bar \lambda_1^2+\bar\lambda_2^2)=0.
\end{aligned}
\end{equation}
We get from (4.29)
\begin{equation}
\begin{aligned}
S(1-\dfrac32S)+\dfrac32S\bar H^2-\dfrac34\bar  H^4-\dfrac{\bar H^2\sqrt{(4S-3\bar H^2)\bar H^2}}4=0.
\end{aligned}
\end{equation}
We consider function
$$
f(x)=S(1-\dfrac32S)+\dfrac32Sx-\dfrac34x^2-\dfrac{x\sqrt{(4S-3x)x}}4
$$
for $S<x< \dfrac43S$.
We know that
$$
f(S)=S(1-S)<0
$$
since $1<S<2$,
\begin{equation}
\begin{aligned}
f^{\prime}(x)=\dfrac{df(x)}{dx}&=\dfrac32S-\dfrac32x-\dfrac{\sqrt{(4S-3x)x}}4-\dfrac {x(2S-3x)}{4\sqrt{(4S-3x)x}}\\
&=\dfrac32(S-x)-\dfrac {3x(S-x)}{2\sqrt{(4S-3x)x}}\\
&=\dfrac32(S-x)\bigl(1-\dfrac {x}{\sqrt{(4S-3x)x}}\bigl)>0
\end{aligned}
\end{equation}
since
$S<x$  and  $x>\sqrt{(4S-3x)x}$.
Thus,  $f(x)$ is an increasing function of $x$.
\newline
If $S\geq \dfrac65$, then $\dfrac43S\leq 3S-2$. Hence, we have $S<\bar H^2< \dfrac43S$. \newline
Since
$$
f(\dfrac43S)=S(1-\dfrac32S)+\dfrac32S\dfrac43S-\dfrac34(\dfrac43S)^2=S(1-\dfrac56S) \leq 0,
$$
we conclude  $f(x)<0$ for any $x \in (S, \dfrac43 S)$,
which  is in contradiction to  (4.31).
Thus, we must have $S<\dfrac65$. In this case,
$\dfrac{4S}3 >3S-2$ and
\begin{equation*}
\begin{aligned}
f(3S-2)
&=S(1-\dfrac32S)+\dfrac32S(3S-2)-\dfrac34(3S-2)^2\\
&\ \ -\dfrac{(3S-2)\sqrt{(4S-3(3S-2))(3S-2)}}4\\
&=(3S-2)\biggl(\dfrac32-\dfrac{5}4S-\dfrac{\sqrt{(6-5S)(3S-2)}}4\biggl)\\
&=(3S-2)\dfrac{\sqrt{6-5S}}4\bigl(\sqrt{6-5S}-\sqrt{3S-2}\bigl)<0.
\end{aligned}
\end{equation*}
Therefore, it is also impossible. It finishes the proof of Lemma 4.2.

\end{proof}

\begin{lemma}
Under the assumptions of Proposition {\rm  4.1}, if \begin{equation}
\begin{cases}
\begin{aligned}
& \sum_{i,j,k,p}(\bar h^{p^{\ast}}_{ijk})^2=0, \ \  \bar\lambda=0,   \ \ \bar\lambda_1\bar\lambda_2\neq 0, \\
& S<\sup H^2=\bar H^2 \leq 3S-2 \ \ \text{\rm and } \ \  S<\sup H^2<\dfrac43S,
\end{aligned}
\end{cases}
\end{equation}
is  satisfied, then we have
$$
\bar \lambda_1=\dfrac{3\bar H- \sqrt{4S-3\bar H^2}}4, \ \quad
\bar \lambda_2=\dfrac{\bar H+\sqrt{4S-3\bar H^2}}4.
$$
and  $S\geq \dfrac65$.
\end{lemma}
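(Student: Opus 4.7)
The plan is to piggyback on Lemma 4.2 and then run the analogous endpoint analysis with the opposite choice of sign. The first claim is essentially immediate: under the assumption that $\bar\lambda_1 + \bar\lambda_2 = \bar H$ and $\bar\lambda_1^2 + 3\bar\lambda_2^2 = S$, there are only the two possible branches
$$
\bar\lambda_1 = \frac{3\bar H \pm \sqrt{4S - 3\bar H^2}}{4}, \qquad \bar\lambda_2 = \frac{\bar H \mp \sqrt{4S - 3\bar H^2}}{4},
$$
and the "+" branch has already been ruled out in Lemma 4.2, so the "$-$" branch must hold.

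For the bound $S \geq 6/5$, I would substitute the "$-$" branch into equation (4.30), using
$\bar\lambda_1^2 + \bar\lambda_2^2 = \tfrac14\bigl(\bar H^2 + 2S - \sqrt{(4S - 3\bar H^2)\bar H^2}\bigl)$. This produces an equation $g(\bar H^2) = 0$ where
$$
g(x) := S\bigl(1 - \tfrac{3}{2}S\bigr) + \tfrac{3}{2}Sx - \tfrac{3}{4}x^2 + \tfrac{1}{4}\,x\sqrt{(4S-3x)x},
$$
which differs from the $f(x)$ of Lemma 4.2 only in the sign of the final radical term. The next step is the derivative computation, which is the same algebraic identity as in Lemma 4.2 after factoring out $S-x$:
$$
g'(x) = \tfrac{3}{2}(S-x)\Bigl(1 + \tfrac{\sqrt{x}}{\sqrt{4S-3x}}\Bigr),
$$
so $g$ is strictly decreasing on $(S, 4S/3)$.

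The crucial difference from Lemma 4.2 is a sign flip at the lower endpoint: one checks $g(S) = S(1 - S/2) > 0$ for $1 < S < 2$, in contrast to $f(S) = S(1-S) < 0$. Now assume for contradiction that $S < 6/5$; then $3S - 2 < 4S/3$, so Proposition 4.1 forces $\bar H^2 \in (S, 3S-2]$. The endpoint evaluation, mirroring the corresponding computation for $f$ in Lemma 4.2 but with the sign of the radical flipped, yields
$$
g(3S-2) = \tfrac{(3S-2)\sqrt{6-5S}}{4}\bigl(\sqrt{6-5S} + \sqrt{3S-2}\bigr) > 0.
$$
Since $g$ is decreasing and positive at both endpoints $S$ and $3S-2$, we conclude $g > 0$ throughout $(S, 3S-2]$, contradicting $g(\bar H^2) = 0$. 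Hence $S \geq 6/5$.

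I do not anticipate a substantive obstacle here: the proof is a mirror image of Lemma 4.2, and every piece of algebra reuses structure already established there. The only subtle point to get right is tracking which upper endpoint ($3S-2$ or $4S/3$) governs the admissible range for $\bar H^2$, and verifying that the contradiction closes off exactly the range $S < 6/5$ while leaving $S \geq 6/5$ open, as claimed.
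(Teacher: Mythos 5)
Your proposal is correct and follows essentially the same route as the paper: the minus branch is forced by Lemma 4.2, and the constancy equation from (2.25) becomes $f_1(\bar H^2)=0$ for the same function (your $g$), which is shown to be strictly decreasing on $(S,4S/3)$ and positive at $x=3S-2$ when $S<6/5$, giving the same contradiction on $(S,3S-2]$. The extra evaluation $g(S)>0$ is harmless but not needed, since monotone decrease plus positivity at the right endpoint already yields positivity on the whole interval.
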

\begin{proof}
According to Lemma 4.2, we must have
$$
\bar \lambda_1=\dfrac{3\bar H-\sqrt{4S-3\bar H^2}}4, \ \quad
\bar \lambda_2=\dfrac{\bar H+\sqrt{4S-3\bar H^2}}4.
$$
Thus,
\begin{equation}
\bar \lambda_1^2+\bar \lambda_2^2=\dfrac{\bar H^2+2S- \sqrt{(4S-3\bar H^2)\bar H^2}}4.
\end{equation}
If $S< \dfrac65$ holds,  then we get
$\bar \lambda_1\neq 3\bar \lambda_2$ and
$\dfrac{4S}3 > 3S-2$.
According to (2.25), we have
\begin{equation*}
\begin{aligned}
S(1-\dfrac12S)-(S-\bar H^2)^2+\dfrac12(\bar  H^2)^2-\bar H^2(\bar \lambda_1^2+\bar\lambda_2^2)=0.
\end{aligned}
\end{equation*}
we obtain from (4.34)
\begin{equation}
\begin{aligned}
S(1-\dfrac32S)+\dfrac32S\bar H^2-\dfrac34\bar  H^4+\dfrac{\bar H^2\sqrt{(4S-3\bar H^2)\bar H^2}}4=0.
\end{aligned}
\end{equation}
Now we consider function
$$
f_1(x)=S(1-\dfrac32S)+\dfrac32Sx-\dfrac34x^2+\dfrac{x\sqrt{(4S-3x)x}}4
$$
for $S<x\leq 3S-2$.
Since
\begin{equation*}
\begin{aligned}
f_1^{\prime}(x)=\dfrac{df_1(x)}{dx}&=\dfrac32S-\dfrac32x+\dfrac{\sqrt{(4S-3x)x}}4+\dfrac {x(2S-3x)}{4\sqrt{(4S-3x)x}}\\
&=\dfrac32(S-x)+\dfrac {3x(S-x)}{2\sqrt{(4S-3x)x}}\\
&=\dfrac32(S-x)\bigl(1+\dfrac {x}{\sqrt{(4S-3x)x}}\bigl)<0
\end{aligned}
\end{equation*}
for
$S<x$,
 $f_1(x)$ is a decreasing function of $x$  on $(S,3S-2)$.
\begin{equation*}
\begin{aligned}
f_1(3S-2)
&=S(1-\dfrac32S)+\dfrac32S(3S-2)-\dfrac34(3S-2)^2\\
&\ \ +\dfrac{(3S-2)\sqrt{(4S-3(3S-2))(3S-2)}}4\\
&=(3S-2)\biggl(\dfrac32-\dfrac{5}4S+\dfrac{\sqrt{(6-5S)(3S-2)}}4\biggl)\\
&=(3S-2)\dfrac{\sqrt{6-5S}}4\bigl(\sqrt{6-5S}+\sqrt{3S-2}\bigl)>0
\end{aligned}
\end{equation*}
since $S< \dfrac65$.
Thus $f_1(x)>0$ for any $x \in (S, 3S-2]$,
which is in contradiction to (4.35).
\end{proof}
\begin{lemma}
Under the assumptions of Proposition {\rm  4.1}, if
\begin{equation}
\begin{cases}
\begin{aligned}
& \sum_{i,j,k,p}(\bar h^{p^{\ast}}_{ijk})^2=0, \ \  \bar \lambda=0,    \ \ \bar\lambda_1\bar\lambda_2\neq 0, \\
& S<\sup H^2=\bar H^2 \leq 3S-2 \ \ \text{\rm and } \ \  S<\sup H^2<\dfrac43S,
\end{aligned}
\end{cases}
\end{equation}
are satisfied, then we have
  $1.89\leq S<2$.
\end{lemma}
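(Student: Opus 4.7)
The plan is to squeeze a second polynomial constraint between $\bar H^2$ and $S$ out of Lemma 2.5, and to combine it with equation (4.35) from Lemma 4.3 to corner $S$ into $[1.89,2)$. From Lemma 4.3 I may assume $S\geq \tfrac{6}{5}$, $\bar\lambda=0$, $\bar\lambda_1\bar\lambda_2\neq 0$, $\bar\lambda_1=(3\bar H-u)/4$, $\bar\lambda_2=(\bar H+u)/4$, and $\bar H^2\in (S,\tfrac{4S}{3})$, where $u:=\sqrt{(4S-3\bar H^2)\bar H^2}$; the upper bound $S<2$ is the standing hypothesis inherited from Proposition 4.1.

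Since $\bar H\neq 0$, $\bar\lambda=0$ and $\sum_{i,j,k,p}(\bar h^{p^{\ast}}_{ijk})^{2}=0$, Lemma 2.5 applied at each $p_m$ and passed to the limit yields
\[
0\leq \sum_{i,j,k,l,p}(\bar h^{p^{\ast}}_{ijkl})^{2}=\bar H^{2}\bigl[\bar H^{2}-2S+\tfrac{1}{2}\bar H^{4}-\bar K\bar H^{2}-\bar K^{2}\bigr]+\bar H^{2}\bigl(S+2-\tfrac{3}{2}\bar H^{2}-Q\bigr)Q,
\]
with $\bar K=\tfrac{1}{2}(\bar H^{2}-S)$ and $Q:=\bar\lambda_{1}^{2}+\bar\lambda_{2}^{2}=\tfrac{1}{4}(\bar H^{2}+2S-u)$ by (4.34). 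Setting $t=\bar H^{2}$, this is a polynomial inequality $\Psi(S,t,u)\geq 0$ linear in $u$, while (4.35) rearranges to $tu=3t^{2}-6St+6S^{2}-4S$, expressing $u$ as a rational function of $(S,t)$ on the admissible region. Substituting for $u$ (consistent with $u^{2}=4St-3t^{2}$) and clearing denominators produces a pure polynomial inequality $\widetilde\Psi(S,t)\geq 0$ in the two variables.

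To collapse this to a single variable I would invert (4.35). The function $f_{1}(t):=S(1-\tfrac{3}{2}S)+\tfrac{3}{2}St-\tfrac{3}{4}t^{2}+\tfrac{1}{4}tu(t)$ satisfies $f_{1}(S)=S(1-\tfrac{S}{2})>0$ (since $S<2$), $f_{1}(4S/3)=S(1-\tfrac{5S}{6})\leq 0$ (since $S\geq \tfrac{6}{5}$), and
\[
f_{1}^{\prime}(t)=\tfrac{3}{2}(S-t)\bigl(1+\tfrac{t}{u(t)}\bigr)<0
\]
on $(S,4S/3)$, so (4.35) has a unique root $t=t(S)\in(S,4S/3]$ for each admissible $S$. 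Plugging $t(S)$ into $\widetilde\Psi$ reduces the problem to showing $g(S):=\widetilde\Psi(S,t(S))<0$ on $[\tfrac{6}{5},1.89)$.

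The main obstacle is the algebraic bulk of $\widetilde\Psi$ together with the fact that the threshold $1.89$ is not an algebraic constant naturally attached to the data: it will appear as the numerical approximation of the largest real root in $[\tfrac{6}{5},2)$ of the resultant of (4.35) and $\widetilde\Psi$ with respect to $t$. I would bypass any explicit formula for $t(S)$ by computing that resultant $R(S)$ directly, establishing $R(S)<0$ on $[\tfrac{6}{5},1.89)$ by locating its sign changes, and then reading off the desired bound from the sign of $g$ on the short verification interval; the monotonicity of $f_{1}$ together with continuity in $S$ supplies the bridging argument needed to transfer $\widetilde\Psi(S,t(S))<0$ into the final inequality $S\geq 1.89$.
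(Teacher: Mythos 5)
Your setup matches the paper's starting point (the limit form of Lemma 2.5 together with the constraint (4.35) coming from $\tfrac12\mathcal{L}S=0$ with $\sum(\bar h^{p^{\ast}}_{ijk})^2=0$), but the decisive content of the lemma is exactly the part you defer, and your prediction of how the bound $1.89$ would emerge from it is not correct. After eliminating $u$ via (4.35), the quantity you must show nonnegative collapses to $\widetilde\Psi(S,t)=t^3-3St^2+3S^2t-3S^2+2S=(t-S)^3+S(S-1)(S-2)$ with $t=\bar H^2$. If one relaxes $t$ to range over $(S,\tfrac43S)$, the trivial bound $\sum(\bar h^{p^{\ast}}_{ijkl})^2\ge 0$ only forces $28S^2-81S+54\ge 0$, i.e.\ $S\gtrsim 1.85$, which falls short of $1.89$; and if instead one pins $t=t(S)$ as the root of $f_1$, as you propose, then $\widetilde\Psi(S,t(S))$ stays negative on the whole range up to $S=2$ (near $S=2$ one has $t(S)-S\sim\sqrt{S(2-S)/3}$ while the required size is $\bigl(S(S-1)(2-S)\bigr)^{1/3}$, which is larger), so the resultant of (4.35) and $\widetilde\Psi$ with respect to $t$ has no sign change near $1.89$ at all. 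Thus the step ``read off the bound from the largest real root of $R(S)$ in $[\tfrac65,2)$'' cannot work as described: $1.89$ is not an algebraic threshold of your two relations, and the entire quantitative verification (branch control after squaring, exclusion of the spurious sign branch already ruled out in Lemma 4.2, and the sign analysis of $\widetilde\Psi$ along the constraint curve) is left unexecuted, which is precisely where the proof lives.

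The paper obtains $1.89$ by a different mechanism that your plan omits: it does not use only $\sum(\bar h^{p^{\ast}}_{ijkl})^2\ge 0$, but the refined lower bound of Lemma 2.4, $\sum(h^{p^{\ast}}_{ijkl})^2\ge 2(h^{1^{\ast}}_{1122}-h^{1^{\ast}}_{2211})^2+2(h^{2^{\ast}}_{1122}-h^{2^{\ast}}_{2211})^2+\tfrac12(h^{1^{\ast}}_{2222}-h^{2^{\ast}}_{2221})^2$, evaluated via the Ricci identities and the Gauss equation to give $K^2\bigl(6S-4\bar H^2-\tfrac32\bar\lambda_2^2\bigr)$; combining this with (4.37), eliminating the square root by (4.35), and then letting $t$ range over $(S,\tfrac43S)$ leads to the cubic $g(x)$ whose endpoint value $g(\tfrac43S)$ changes sign at $S\approx 1.8926$ — that is where $1.89$ comes from. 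A corrected version of your route (keeping $t$ pinned by (4.35) and proving, e.g., an upper bound on $t(S)-S$ on $[1.85,2)$) could in principle establish the lemma, even without Lemma 2.4, but that argument is not the one you sketch and none of its estimates are supplied. One further small point: Lemma 2.5 cannot be ``applied at each $p_m$'', since its hypothesis $\sum(h^{p^{\ast}}_{ijk})^2(p_m)=0$ fails there; one must use the identities of Lemma 2.3 at each $p_m$ and pass to the limit, as the paper does.
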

\begin{proof}
According to Lemma 4.2 and Lemma 4.3, we
know  $2>S\geq \dfrac65$ and
$$
\bar \lambda_1=\dfrac{3\bar H- \sqrt{4S-3\bar H^2}}4, \ \quad
\bar \lambda_2=\dfrac{\bar H+\sqrt{4S-3\bar H^2}}4.
$$
In this case, $\dfrac{4S}3\leq 3S-2$.
Hence, we  have
$$
 \dfrac65 \leq S<2, \ \ S<\bar H^2<\dfrac43S,  \  \  \sum_{i,j,k,p^{\ast}}(\bar h^{p^{\ast}}_{ijk})^2=0
$$
and
$$
\bar \lambda_1=\dfrac{3\bar H-\sqrt{4S-3\bar H^2}}4, \ \quad
\bar \lambda_2=\dfrac{\bar H+\sqrt{4S-3\bar H^2}}4.
$$
From  (2.35) of Lemma 2.3  in Section 2, we get
\begin{equation*}
\begin{aligned}
&\dfrac12\lim_{m\to \infty}\mathcal L\sum_{i,j,k,p}(h^{p^{\ast}}_{ijk})^2(p_m)\\
&=(\bar H^2-2S)\bar H^2+(\dfrac12\bar H^2+\dfrac S2+2)\bar H^2(\bar \lambda_1^2+\bar \lambda_2^2)\\
&\ \ -\dfrac{\bar H^2-S}2(\bar H^4+H^3\bar \lambda_1)-\bar H^2(\bar \lambda_1^2+\bar \lambda_2^2)^2
-\bar H^3(\bar \lambda_1^3+\bar \lambda_2^3).
\end{aligned}
\end{equation*}
Since
$$
H\lambda_1=\frac{H^2-S}2+\lambda_1^2+\lambda_2^2,  \  \  \lambda_1^3
+ \lambda_2^3=H(\lambda_1^2-\lambda_1\lambda_2+\lambda_2^2),
$$
we have
\begin{equation*}
\begin{aligned}
&\dfrac12\lim_{m\to \infty}\mathcal L\sum_{i,j,k,p} (h^{p^{\ast}}_{ijk})^2(p_m)\\
&=\bar H^2\bigl[\bar H^2-2S-\dfrac{(\bar H^2-S)^2}4-\dfrac{\bar H^2(\bar H^2-S)}2+\dfrac {\bar H^4}2 \bigl ]\\
&\ \ +\bar H^2\bigl(S-\dfrac{3\bar H^2}2
+2-\bar \lambda_1^2-\bar\lambda_2^2\bigl)(\bar \lambda_1^2+\bar \lambda_2^2).
\end{aligned}
\end{equation*}
By making use of
$$
\bar \lambda_1^2+\bar \lambda_2^2=\dfrac{\bar H^2+2S- \sqrt{(4S-3\bar H^2)\bar H^2}}4,
$$
we obtain
\begin{equation}
\begin{aligned}
&\dfrac12\lim_{m\to \infty}\mathcal L\sum_{i,j,k,p} ( h^{p^{\ast}}_{ijk})^2(p_m)\\
&=\bar H^2\biggl[-\dfrac{\bar H^4}2+\dfrac{3\bar H^2}2 -S
+\dfrac{(\bar H^2-1)\sqrt{(4S-3\bar H^2)\bar H^2}}2 \biggl ].\\
\end{aligned}
\end{equation}
On the other hand,  from (2.34) and (2.35)
\begin{equation*}
\begin{aligned}
&\dfrac12\lim_{m\to \infty}\mathcal L\sum_{i,j,k,p} (h^{p^{\ast}}_{ijk})^2(p_m)
=\lim_{m\to \infty}\sum_{i,j,k,l,p}(\bar h^{p^{\ast}}_{ijkl})^2(p_m)\\
&\geq2(\bar h_{1122}^{1^{\ast}}-\bar h_{2211}^{1^{\ast}})^{2}+2(\bar h_{1122}^{2^{\ast}}-\bar h_{2211}^{2^{\ast}})^{2}
+\frac{1}{2}(\bar h_{2222}^{1^{\ast}}-\bar h_{2221}^{2^{\ast}})^{2}. \\
\end{aligned}
\end{equation*}
From Gauss equation and Ricci identities, we have
\begin{equation*}
\aligned
h_{2222}^{1^{\ast}}-h_{2221}^{2^{\ast}}
=&h_{2212}^{2^{\ast}}-h_{2221}^{2^{\ast}}\\
=&\sum_mh_{m2}^{2^{\ast}}R_{m212}+\sum_mh_{2m}^{2^{\ast}}R_{m212}+\sum_mh_{22}^{m^{\ast}}R_{m212}\\
=&(h_{12}^{2^{\ast}}+h_{21}^{2^{\ast}}+h_{22}^{1^{\ast}})R_{1212}\\
=&3\lambda_{2}K(\delta_{11}\delta_{22}-\delta_{12}\delta_{21})\\
=&3\lambda_{2}K,
\endaligned
\end{equation*}
\begin{equation*}
\begin{aligned}
h_{1122}^{1^{\ast}}-h_{2211}^{1^{\ast}}
=&h_{1212}^{1^{\ast}}-h_{1221}^{1^{\ast}}\\
=&\sum_mh_{m2}^{1^{\ast}}R_{m112}+\sum_mh_{1m}^{1^{\ast}}R_{m212}+\sum_mh_{12}^{m^{\ast}}R_{m112}\\
=&\lambda_{2}R_{2112}+\lambda_{1}R_{1212}+\lambda_{2}R_{2112}\\
=&(\lambda_{1}-2\lambda_{2})K.
\end{aligned}
\end{equation*}
From the above equations, we obtain
\begin{equation*}
\begin{aligned}
&2(h_{1122}^{1^{\ast}}-h_{2211}^{1^{\ast}})^{2}+2(h_{1122}^{2^{\ast}}-h_{2211}^{2^{\ast}})^{2}+\frac{1}{2}(h_{2222}^{1^{\ast}}-h_{2221}^{2^{\ast}})^{2} \\
=&2(\lambda_{1}-2\lambda_{2})^{2}K^{2}+\frac{1}{2}(3\lambda_{2}K)^{2}\\
=&K^{2}[2\lambda_{1}^{2}-8\lambda_{1}\lambda_{2}+\frac{25}{2}\lambda_{2}^{2}]\\
=&K^{2}[2S-4(H^{2}-\lambda_{1}^{2}-\lambda_{2}^{2})+\frac{13}{2}\lambda_{2}^{2}]\\
=&K^{2}(6S-4H^{2}-\frac{3}{2}\lambda_{2}^{2}).
\end{aligned}
\end{equation*}
Thus, we have
\begin{equation*}
\begin{aligned}
&\dfrac12\lim_{m\to \infty}\mathcal L\sum_{i,j,k,p^{\ast}} (h^{p^{\ast}}_{ijk})^2(p_m)
=\lim_{m\to \infty}\sum_{i,j,k,l,p^{\ast}}(\bar h^{p^{\ast}}_{ijkl})^2(p_m)\\
&\geq2(\bar h_{1122}^{1^{\ast}}-\bar h_{2211}^{1^{\ast}})^{2}+2(\bar h_{1122}^{2^{\ast}}-\bar h_{2211}^{2^{\ast}})^{2}
+\frac{1}{2}(\bar h_{2222}^{1^{\ast}}-\bar h_{2221}^{2^{\ast}})^{2} \\
&\geq \dfrac{(\bar H^2-S)^2}4\bigl(6S-4\bar H^2-\dfrac{3\bar \lambda_2^2}2 \bigl )\\
&=\dfrac{(\bar H^2-S)^2}4\bigl[(6-\dfrac38)S-(4-\dfrac3{16})\bar H^2-\dfrac{3\sqrt{(4S-3\bar H^2)\bar H^2}}{16} \bigl ]
\end{aligned}
\end{equation*}
Hence, we obtain,  in view of (4.37) and (4.38)
\begin{equation*}
\begin{aligned}
&\bar H^2\bigl[-\dfrac{\bar H^4}2+\dfrac{3\bar H^2}2 -S
+\dfrac{(\bar H^2-1)\sqrt{(4S-3\bar H^2)\bar H^2}}2 \bigl ]\\
&\geq \dfrac{(\bar H^2-S)^2}4\bigl[(6-\dfrac38)S-(4-\dfrac3{16})\bar H^2
-\dfrac{3\sqrt{(4S-3\bar H^2)\bar H^2}}{16} \bigl ].
\end{aligned}
\end{equation*}
From (2.25) and $\sum_{i,j,k,p}(\bar h^{p^{\ast}}_{ijk})^2=0$, we know
$$
\dfrac{\bar H^2\sqrt{(4S-3\bar H^2)\bar H^2}}4=-S(1-\dfrac32S)-\dfrac32S\bar H^2+\dfrac34\bar  H^4.
$$
Therefore, we conclude
\begin{equation}
\begin{aligned}
&\bar H^6-3\bar H^4S
+3\bar H^2S^2-3S^2+2S\\
&\geq  \dfrac{(\bar H^2-S)^2}4\bigl[(6+\dfrac34)S-(4+\dfrac3{8})\bar H^2
-\dfrac{3S(3S-2)}{8\bar H^2} \bigl ].
\end{aligned}
\end{equation}
Since
$-\dfrac3{4}x-\dfrac{3S(3S-2)}{8x} $ is a decreasing function of $x$,  for $S<x<\dfrac{4S}3$,
we have
$$
-\dfrac3{4}\bar H^2-\dfrac{3S(3S-2)}{8\bar H^2} > -S-\dfrac{9(3S-2)}{32}.
$$
Hence, we get
\begin{equation}
\begin{aligned}
&\bar H^6-3\bar H^4S
+3\bar H^2S^2-3S^2+2S\\
&\geq  \dfrac{(\bar H^2-S)^2}4\bigl[(5-\dfrac{3}{32})S-(4-\dfrac3{8})\bar H^2+\dfrac9{16}\bigl ].
\end{aligned}
\end{equation}
We consider a function $g=g(x)$ of $x$ defined by
$$
g(x)=x^3-3x^2S
+3xS^2-3S^2+2S- \dfrac{(x-S)^2}4\bigl[(5-\dfrac{3}{32})S-(4-\dfrac3{8})x+\dfrac9{16}
 \bigl ].
$$

\begin{equation*}
\begin{aligned}
g^{\prime}(x)&=3x^2-6xS
+3S^2+\dfrac{(x-S)^2}4(4-\dfrac3{8})\\
&- \dfrac{(x-S)}2\bigl[(5-\dfrac{3}{32})S-(4-\dfrac3{8})x+\dfrac9{16}\bigl]\\
&=(x-S)\bigl[(6-\dfrac{9}{32})x-(6+\dfrac{23}{64})S-\dfrac9{32}\bigl].
\end{aligned}
\end{equation*}
Hence, $g(x)$ attains its minimum at $(6-\dfrac{9}{32})x-(6+\dfrac{23}{64})S-\dfrac9{32}=0$.
$$
g(S)=S(S-1)(S-2)<0, \ \ g(\dfrac{4S}3)= (1+\dfrac{121}{36\cdot96})S^3-\dfrac{193}{64}S^2+2S<0
$$
if $\dfrac65\leq S<1.89$. We have $g(x)<0$ for $S<x<\dfrac{4S}3$, which is in contradiction to (4.39).
Hence, $S$ satisfies
$$
1.89\leq S<2.
$$

\end{proof}

\vskip3mm
\noindent
\begin{lemma}
Under the assumptions of Proposition {\rm  4.1}, if
 \begin{equation*}
\begin{cases}
\begin{aligned}
& \sum_{i,j,k,p}(\bar h^{p^{\ast}}_{ijk})^2=0, \ \  \bar \lambda=0,    \ \ \bar\lambda_1\bar\lambda_2\neq 0, \\
& S<\sup H^2=\bar H^2 \leq 3S-2 \ \ \text{\rm and } \ \  S<\sup H^2<\dfrac43S,
\end{aligned}
\end{cases}
\end{equation*}
is satisfied, then we have
$$
S<\bar H^2\leq S+\dfrac15S.
$$
\end{lemma}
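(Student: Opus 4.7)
The plan is to refine the analysis from Lemma 4.4 using the improved range $1.89\le S<2$ now available. I would again begin from equation (2.25) of Lemma 2.1 under the standing hypothesis $\sum_{i,j,k,p}(\bar h^{p^{\ast}}_{ijk})^2=0$ and $\bar\lambda=0$, which yields the algebraic identity
\begin{equation*}
\tfrac{1}{4}\bar H^{2}\sqrt{(4S-3\bar H^{2})\bar H^{2}}
=-S\bigl(1-\tfrac{3}{2}S\bigr)-\tfrac{3}{2}S\bar H^{2}+\tfrac{3}{4}\bar H^{4},
\end{equation*}
together with the key inequality (4.39) coming from comparing the two expressions for $\tfrac12\mathcal L\sum (h_{ijk}^{p^\ast})^2$ (via Lemmas 2.3 and 2.4). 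Substituting the identity into (4.39) eliminates the square root and produces a polynomial inequality in $y=\bar H^{2}$ with coefficients depending only on $S$.

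Next, I would set $y=\bar H^{2}$ and introduce an auxiliary function $G_S(y)$ obtained by collecting all terms of that polynomial inequality on one side, so that the generalized maximum principle forces $G_S(y)\ge 0$ on the admissible range $S<y<\tfrac{4S}{3}$. The target bound $y\le \tfrac{6S}{5}$ will be established by showing $G_S(y)<0$ for $y\in\bigl(\tfrac{6S}{5},\tfrac{4S}{3}\bigr)$ whenever $1.89\le S<2$. Concretely I would evaluate $G_S$ at the endpoints $y=\tfrac{6S}{5}$ and $y=\tfrac{4S}{3}$, verify that both values are non-positive in the relevant $S$-range, and then control $G_S$ on the interior by a monotonicity/convexity argument on $G_S'(y)$ analogous to the one used in the proof of Lemma 4.4.

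The main obstacle will be the algebraic bookkeeping: after clearing the $\bar H^{2}$ in the denominator coming from the term $-\tfrac{3S(3S-2)}{8\bar H^{2}}$ (for which I would avoid using the coarse lower bound $y>S$ employed in (4.40), since that estimate is too lossy here), one gets a degree-five polynomial in $y$, and one must verify non-positivity on $(\tfrac{6S}{5},\tfrac{4S}{3})$ for $S\in[1.89,2)$. I would organize this by writing the critical polynomial as $(y-\tfrac{6S}{5})\,Q_S(y)$ and showing that $Q_S$ has constant sign on the admissible interval. If this direct factorization is too messy, a fall-back is to use the explicit expressions
\begin{equation*}
\bar\lambda_{1}=\tfrac{3\bar H-\sqrt{4S-3\bar H^{2}}}{4},\qquad
\bar\lambda_{2}=\tfrac{\bar H+\sqrt{4S-3\bar H^{2}}}{4}
\end{equation*}
from Lemma 4.3, insert them into the right-hand side of (2.35) and the Simons-type lower bound from Lemma 2.4 to produce a slightly different inequality that is linear in $\sqrt{(4S-3\bar H^{2})\bar H^{2}}$, then isolate and square to obtain a polynomial whose roots include $y=\tfrac{6S}{5}$.

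Once $G_S(y)\ge 0$ is shown to fail for $y>\tfrac{6S}{5}$, the conclusion $S<\bar H^{2}\le \tfrac{6S}{5}=S+\tfrac{1}{5}S$ follows, with the strict lower bound coming from $\bar\lambda_{1}\bar\lambda_{2}\neq 0$ (which forces $\bar H^{2}>S$, as already observed in Proposition 4.1). This refined bound is what will feed into the final contradiction completing the proof of Theorem 4.2.
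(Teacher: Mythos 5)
There is a genuine gap in your main route. You propose to obtain the bound by showing that inequality (4.39) is violated for every $y=\bar H^{2}\in(\tfrac{6S}{5},\tfrac{4S}{3})$ when $1.89\le S<2$, and in particular to verify non-positivity of the resulting polynomial at the endpoint $y=\tfrac{4S}{3}$. That endpoint check fails: at $y=\tfrac{4S}{3}$ the left-hand side of (4.39) minus its right-hand side equals
\begin{equation*}
S\Bigl[\bigl(\tfrac{28}{27}-\tfrac{7}{3456}\bigr)S^{2}-\bigl(3+\tfrac{1}{64}\bigr)S+2\Bigr],
\end{equation*}
whose larger root is approximately $1.893$; hence for $S\geq 1.9$ (for instance $S=1.95$, $y=2.6$ gives roughly $0.18$ versus $0.07$) inequality (4.39) actually holds at and near $y=\tfrac{4S}{3}$, so no contradiction can be extracted from it on that part of the interval. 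The loss comes from the fact that (4.39) is only a one-sided consequence of the exact relation $f_2(\bar H^{2})=0$, where $f_2(x)=S(1-\tfrac32 S)+\tfrac32 Sx-\tfrac34 x^{2}+\tfrac{x\sqrt{(4S-3x)x}}{4}$; once the identity is merely substituted into an inequality, precisely the information needed for this lemma has been discarded.

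The paper's proof of this lemma uses none of (4.38)--(4.39), Lemma 2.3 or Lemma 2.4: it works directly with the identity $f_2(\bar H^{2})=0$ coming from (2.25), $\sum_{i,j,k,p}(\bar h^{p^{\ast}}_{ijk})^2=0$ and Lemma 4.3, computes $f_2(\tfrac{6S}{5})=S\bigl(1-\tfrac{39-6\sqrt{3}}{50}S\bigr)<0$ (this is exactly where the input $S\ge 1.89$ from Lemma 4.4 enters, since negativity requires $S>\tfrac{50}{39-6\sqrt{3}}\approx 1.75$), and notes $f_2'(x)=\tfrac32(S-x)\bigl(1+\tfrac{x}{\sqrt{(4S-3x)x}}\bigr)<0$ for $x>S$, so $f_2<0$ on all of $[\tfrac{6S}{5},\tfrac{4S}{3}]$ and $\bar H^{2}$ cannot lie there. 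Your fall-back does not repair the gap as stated: squaring the identity does give a quartic constraint in $y$, but $y=\tfrac{6S}{5}$ is not a root of it — it is merely the threshold at which $f_2$ becomes negative once $S\ge1.89$ — and re-introducing (2.35) and Lemma 2.4 again yields only inequalities of the same insufficient type. If you drop (4.39) and argue with $f_2$ itself (sign at $\tfrac{6S}{5}$ plus monotonicity), you recover the correct proof; the refined inequality involving the $\tfrac{1}{\bar H^{2}}$ term is what the paper reserves for the final contradiction in the proof of Proposition 4.3, after this lemma is available.
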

\begin{proof}
Since $\bar H^2=\sup H^2$, if $S+\dfrac S5<\bar H^2<S+\dfrac S3$,
we consider a function $f_2=f_2(x)$ of $x$ defined by
$$
f_2(x)=S(1-\dfrac32S)+\dfrac32Sx-\dfrac34x^2+\dfrac{x\sqrt{(4S-3x)x}}4
$$
for $\dfrac65S<x\leq \dfrac43S$.
We know that
$$
f_2(\dfrac{6S}5)=S(1- \dfrac {39-6\sqrt3}{50}S)<0
$$
since $1.89<S<2$,
\begin{equation*}
\begin{aligned}
f_2^{\prime}(x)=\dfrac{df(x)}{dx}&=\dfrac32S-\dfrac32x+\dfrac{\sqrt{(4S-3x)x}}4+\dfrac {x(2S-3x)}{4\sqrt{(4S-3x)x}}\\
&=\dfrac32(S-x)+\dfrac {3x(S-x)}{2\sqrt{(4S-3x)x}}\\
&=\dfrac32(S-x)\bigl(1+\dfrac {x}{\sqrt{(4S-3x)x}}\bigl)<0.
\end{aligned}
\end{equation*}
$f_2(x)$ is a decreasing function of $x$ and we can not have
\begin{equation*}
\begin{aligned}
S(1-\dfrac32S)+\dfrac32S\bar H^2-\dfrac34\bar  H^4+\dfrac{\bar H^2\sqrt{(4S-3\bar H^2)\bar H^2}}4=0.
\end{aligned}
\end{equation*}
Hence, we must have
$$
S<\bar H^2<\dfrac65S.
$$
\end{proof}

\vskip2mm
\noindent
{\it Proof of Proposition 4.3}. According to Lemma 4.4 and Lemma 4.5, we have
\begin{equation*}
\begin{cases}
\begin{aligned}
& \sum_{i,j,k,p}(\bar h^{p^{\ast}}_{ijk})^2=0, \ \  \bar \lambda=0,  \ \ \bar\lambda_1\bar\lambda_2\neq 0, \\
&  S<\sup H^2<\dfrac65S, \ \  1.89<S<2.
\end{aligned}
\end{cases}
\end{equation*}
We obtain from (4.38)
\begin{equation*}
\begin{aligned}
&\bar H^6-3\bar H^4S
+3\bar H^2S^2-3S^2+2S\\
&\geq  \dfrac{(\bar H^2-S)^2}4\bigl[(6+\dfrac34)S-(4+\dfrac3{8})\bar H^2
-\dfrac{3S(3S-2)}{8\bar H^2} \bigl ].
\end{aligned}
\end{equation*}
Since
$-\dfrac3{4}\bar H^2-\dfrac{3S(3S-2)}{8\bar H^2} $ is a decreasing function of $\bar H^2$,  for $S<\bar H^2<\dfrac{6S}5$,
we have
$$
-\dfrac3{4}\bar H^2-\dfrac{3S(3S-2)}{8\bar H^2} > -\dfrac{9}{10}S-\dfrac{5(3S-2)}{16}.
$$
\begin{equation}
\begin{aligned}
&\bar H^6-3\bar H^4S
+3\bar H^2S^2-3S^2+2S\\
&\geq  \dfrac{(\bar H^2-S)^2}4\bigl[(5-\dfrac{7}{80})S-(4-\dfrac3{8})\bar H^2+\dfrac5{8}
 \bigl ].
\end{aligned}
\end{equation}
We consider function
$$
f_3(x)=x^3-3x^2S
+3xS^2-3S^2+2S- \dfrac{(x-S)^2}4\bigl[(5-\dfrac{7}{80})S-(4-\dfrac3{8})x+\dfrac5{8}
 \bigl ].
$$

\begin{equation*}
\begin{aligned}
f_3^{\prime}(x)&=3x^2-6xS
+3S^2+\dfrac{(x-S)^2}4(4-\dfrac3{8})\\
&- \dfrac{(x-S)}2\bigl[(5-\dfrac{7}{80})S-(4-\dfrac3{8})x+\dfrac5{8}\bigl]\\
&=(x-S)\bigl[(6-\dfrac{9}{32})x-(6+\dfrac{29}{80})S-\dfrac5{16}\bigl].
\end{aligned}
\end{equation*}
Hence, $f_3(x)$ attains its minimum at $(6-\dfrac{9}{32})x-(6+\dfrac{29}{80})S-\dfrac5{16}=0$.
$$
f_3(S)=S(S-1)(S-2)<0, \ \  f_3(\dfrac{6S}5)= (1+\dfrac1{125}-\dfrac{9}{1600})S^3-(3+\dfrac{1}{160})S^2+2S<0
$$
if $\dfrac65\leq S<2$.  This is in contradiction to (4.40).
Therefore, we conclude that the formula (4.18) in Proposition 4.1 does not occur either.

\begin{flushright}
$\square$
\end{flushright}

\vskip2mm
\noindent
{\it Proof of Theorem 4.2}.  According to Propositions 4.1, Propositions 4.2 and Propositions 4.3, we know that there are no 2-dimensional
complete Lagrangian self-shrinkers $X: M^2\to \mathbb{R}^{4}$ with constant squared norm $S$
of the  second fundamental form  and $1<S<2$.

\begin{flushright}
$\square$
\end{flushright}

\begin{theorem}
 Let $X: M^2\to \mathbb{R}^{4}$ be a
2-dimensional   Lagrangian self-shrinker in $\mathbb R^4$.
If   $S\equiv 2$  or $S\equiv 1$, then  the mean curvature  $H$ satisfies $H\neq 0$ on  $ M^2$.
\end{theorem}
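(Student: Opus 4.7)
The plan is to argue by contradiction: assume $H(p_0)=0$ at some $p_0\in M^2$ and derive an inconsistency.

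First, I eliminate $H\equiv 0$ on $M^2$. In that case the self-shrinker equation $\vec H+X^\perp=0$ forces $X^\perp\equiv 0$, so the position vector $X$ is tangent to $M$ at every point; the flow of $X|_M$ in $\mathbb R^4$ is the ambient dilation $p\mapsto e^t p$, so $M$ is invariant under dilations, hence a cone with vertex at the origin. A complete, smooth $2$-dimensional cone in $\mathbb R^4$ is necessarily a linear $2$-plane through the origin (otherwise the cone metric is incomplete at the vertex), which yields $S\equiv 0$ and contradicts $S=1$ or $S=2$. By real-analyticity of complete self-shrinker solutions of an elliptic equation, $\{H=0\}$ is then a proper analytic subvariety of $M^2$.

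Next, I analyze the structure at $p_0$. The Lagrangian identity $h^{p^{\ast}}_{ij}=h^{i^{\ast}}_{pj}$ realizes $(h^{p^{\ast}}_{ij})$ as a totally symmetric $(0,3)$-tensor on $T_{p_0}M\cong\mathbb R^2$; together with the traceless condition $H^{p^{\ast}}(p_0)=0$, the corresponding cubic form is harmonic, i.e., of the shape $\mathrm{Re}(cz^3)$ for some $c\in\mathbb C$. Rotating the tangent frame (and hence the associated Lagrangian normal frame $e_{p^{\ast}}=Je_p$) by the angle that makes $c\in\mathbb R_{\ge 0}$, I obtain at $p_0$
\begin{equation*}
h^{1^{\ast}}_{11}=-h^{1^{\ast}}_{22}=a,\quad h^{1^{\ast}}_{12}=h^{2^{\ast}}_{11}=h^{2^{\ast}}_{22}=0,\quad h^{2^{\ast}}_{12}=-a,\quad 4a^2=S.
\end{equation*}
Using the self-shrinker identity $H^{p^{\ast}}_{,i}=\sum_k h^{p^{\ast}}_{ik}\langle X,e_k\rangle$ and the formulas of Lemma~2.1, a direct calculation yields
\begin{equation*}
|\nabla^{\perp}\vec H|^2(p_0)=\tfrac{S}{2}|X(p_0)|^2,\qquad \tfrac{1}{2}\mathcal L H^2(p_0)=\tfrac{S}{2}|X(p_0)|^2,
\end{equation*}
together with $\nabla H^2(p_0)=0$ and $\mathrm{Hess}(H^2)(p_0)=\tfrac{S}{2}|X(p_0)|^2\,I_2$ (in the chosen frame), which is positive definite when $X(p_0)\neq 0$. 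The constancy of $S$ combined with Lemma~2.1 also forces $\sum(h^{p^{\ast}}_{ijk})^2(p_0)=S(\tfrac{3}{2}S-1)$.

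To close the argument, I apply Cheng--Peng's generalized maximum principle to $-H^2$, producing a sequence $\{q_m\}\subset M^2$ with $H^2(q_m)\to\inf H^2=0$, $|\nabla H^2|(q_m)\to 0$, and $\liminf_{m\to\infty}\mathcal L H^2(q_m)\ge 0$. Passing to subsequential limits $\bar h^{p^{\ast}}_{ij}$ of the bounded second fundamental form components and normalizing frames at each $q_m$ as in the previous paragraph, I insert the limits into the second-order Simons-type identity of Lemma~2.3 and its specialization in Lemma~2.5, in the spirit of the arguments in Propositions~4.1--4.3. I expect the resulting algebraic system, together with the constraint $\sum(\bar h^{p^{\ast}}_{ijk})^2=S(\tfrac{3}{2}S-1)$, to be inconsistent for $S=1$ and $S=2$, and the sub-case $X(p_0)=0$ (i.e., $p_0$ is the origin of $\mathbb R^4$) to be ruled out by Taylor expansion of the self-shrinker equation at the origin. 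The principal obstacle is the limiting step: the pointwise positivity $\mathcal L H^2(p_0)=S|X(p_0)|^2>0$ is by itself compatible with the strong maximum principle (nonnegative $\mathcal L$-subsolutions can have isolated nondegenerate zeros, as in $u=|x|^2$), so the contradiction must instead be extracted from the simultaneous behavior of $\mathcal L H^2$ and $\mathcal L\sum(h^{p^{\ast}}_{ijk})^2$ along the maximizing sequence, in analogy with the chained derivations of Propositions~4.1--4.3.
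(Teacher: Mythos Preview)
Your pointwise computations at $p_0$ are correct as far as they go, and your frame normalization via the harmonic cubic form is equivalent to the paper's choice. But the proof is not complete: from the sentence ``I expect the resulting algebraic system \ldots\ to be inconsistent'' onward you have a plan, not an argument. More importantly, the route you propose --- eliminating $H\equiv 0$ separately, then applying the generalized maximum principle to $-H^2$ along a sequence, then chaining with Propositions~4.1--4.3 --- is both unnecessary and unlikely to close cleanly. Since $H^2(p_0)=0$ is already the global minimum, passing to a sequence $\{q_m\}$ gains you nothing beyond what you already have at $p_0$, and as you yourself note, $\mathcal L H^2(p_0)\ge 0$ is perfectly consistent with the Omori--Yau inequality.

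The paper's proof is entirely \emph{pointwise} at $p_0$; no maximum principle or sequence is used. The missing ingredient is the fourth covariant derivative. At $p_0$ one computes $\tfrac12\mathcal L\sum_{i,j,k,p}(h^{p^\ast}_{ijk})^2$ in two different ways, from formulas (2.34) and (2.35) of Lemma~2.3. Since $H=0$, formula (2.35) collapses to $-\tfrac{3S}{2}|\nabla^\perp\vec H|^2$, while (2.34) yields $\sum(h^{p^\ast}_{ijkl})^2-(5S-2)\sum(h^{p^\ast}_{ijk})^2+\tfrac{9S}{4}|\nabla^\perp\vec H|^2$; equating them and using the values $\sum(h^{p^\ast}_{ijk})^2=S(\tfrac32 S-1)$ and $|\nabla^\perp\vec H|^2=\tfrac{S}{2}|X|^2$ that you already found gives the exact identity
\[
\sum_{i,j,k,l,p}(h^{p^\ast}_{ijkl})^2\;=\;S(S-1)(3S-2).
\]
On the other hand, the Ricci identities (2.8) at $p_0$ express the antisymmetric parts $h^{p^\ast}_{ijkl}-h^{p^\ast}_{ijlk}$ in terms of $K=-\tfrac{S}{2}$ and the $h^{p^\ast}_{ij}$, and after grouping squares (as in Lemma~2.4 but keeping more terms) one obtains the lower bound $\sum(h^{p^\ast}_{ijkl})^2\ge \tfrac{45}{32}S^3$. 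For $S=1$ this reads $\tfrac{45}{32}\le 0$, and for $S=2$ it reads $\tfrac{45}{4}\le 8$, both contradictions. This single comparison replaces the entire second half of your sketch.
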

\begin{proof} If there exists a point $p \in M^2$ such that $H=0$ at $p$, then we know
$H^{1^*}=H^{2^*}=0$.
Thus, at $p$, we have
$$
H=0, \ \ H^{1^*}=\lambda_1+\lambda_2=0, \ \  \lambda=h_{12}^{1^*}=-h_{22}^{2^*}.
$$
From
$$
H^{p^*}_{,i}=\sum_{k}h_{ik}^{p^*}\langle X, e_k\rangle, \ \ \text{\rm for} \ \ i, p=1, 2,
$$
we have
\begin{equation*}
\begin{aligned}
&h_{111}^{1^*}+h_{221}^{1^*}=H^{1^*}_{,1}=\lambda_1\langle X, e_1\rangle+\lambda\langle X, e_2\rangle,\\
&h_{112}^{1^*}+h_{222}^{1^*}=H^{1^*}_{,2}=\lambda\langle X, e_1\rangle-\lambda_1\langle X, e_2\rangle,\\
&h_{112}^{2^*}+h_{222}^{2^*}=H^{2^*}_{,2}=-\lambda_1\langle X, e_1\rangle-\lambda\langle X, e_2\rangle
\end{aligned}
\end{equation*}
and
$$
0=\dfrac12\nabla_i S=\lambda_1(h_{11i}^{1^*}-3h_{22i}^{1^*})+\lambda(3h_{11i}^{2^*}-h^{2^{\ast}}_{22i}), \ \ \text{for } \ i=1, 2,
$$
it means that,
$$
\lambda_1(h_{11i}^{1^*}-3h_{22i}^{1^*})+\lambda(3h_{11i}^{2^*}-h^{2^{\ast}}_{22i})=0, \ \ \text{for } \ i=1, 2
$$
since $S$ is constant.
Thus, we get a system of linear equations
\begin{equation}
\begin{pmatrix}
&- \lambda & -3 \lambda_1 &3 \lambda&  \lambda_1 &0\\
&0&-\lambda & -3\bar \lambda_1 &3 \lambda& \lambda_1 \\
&0&0&1&0&1\\
&0&1&0&1&0\\
&1&0&1&0&0
\end{pmatrix}
\begin{pmatrix}
h_{222}^{2^*}\\
h_{222}^{1^*}\\
h_{122}^{1^*}\\
h_{112}^{1^*}\\
h_{111}^{1^*}
\end{pmatrix}
=
\begin{pmatrix}
0\\
0\\
\lambda_1\langle X, e_1\rangle+\lambda\langle X, e_2\rangle\\
\lambda\langle X, e_1\rangle-\lambda_1\langle X, e_2\rangle\\
-\lambda_1\langle X, e_1\rangle-\lambda\langle X, e_2\rangle
\end{pmatrix}.
\end{equation}
From $S=4\lambda^2+4\lambda_1^2$, we know  $\lambda_1=-\lambda_2$ and $2\lambda_1^2+2\lambda^2=\dfrac12S\neq 0$.
Hence, by solving the above  system, we get
\begin{equation*}
\begin{aligned}
h_{222}^{2^*}=&\frac{-5\lambda_1}4\langle X, e_1\rangle-\frac{3\lambda}4\langle X, e_2\rangle, \\
h_{222}^{1^*}=&\frac{3\lambda}4\langle X, e_1\rangle-\frac{\lambda_1}4\langle X, e_2\rangle, \\
h_{221}^{1^*}=&\frac{\lambda_1}4\langle X, e_1\rangle-\frac{\lambda}4\langle X, e_2\rangle, \\
h_{211}^{1^*}=&\frac{\lambda}4\langle X, e_1\rangle-\frac{3\lambda_1}4\langle X, e_2\rangle, \\
h_{111}^{1^*}=&\frac{3\lambda_1}4\langle X, e_1\rangle+\frac{5\lambda}4\langle X, e_2\rangle. \\
\end{aligned}
\end{equation*}

\noindent  With a direct calculation, we obtain
\begin{equation*}
\begin{aligned}
|\nabla^{\perp} \vec H|^2&=\sum_{i,p}(H^{p^*}_{,i})^2\\
&=\sum_i(h_{11i}^{1^*}+h_{22i}^{1^*})^2+\sum_i(h_{11i}^{2^*}+h_{22i}^{2^*})^2\\
&=2(\lambda^2+\lambda_1^2)\langle X, e_1\rangle^2+2(\lambda^2+\lambda_1^2)\langle X, e_2\rangle^2
=2(\lambda^2+\lambda_1^2) |X|^2\\
\end{aligned}
\end{equation*}
and
\begin{equation}
\begin{aligned}
&\sum_{i,j,k,p}(h^{p^*}_{ijk})^2\\
&=\sum_p(h_{111}^{p^*})^2+\sum_p(h_{222}^{p^*})^2+3\sum_p(h_{221}^{p^*})^2
+3\sum_p(h_{112}^{p^*})^2\\
&=\sum_{p}(h_{11p}^{1^*})^2+(h_{222}^{1^*})^2+(h_{222}^{2^*})^2
+3\sum_p(h_{22p}^{1^*})^2+3(h_{112}^{1^*})^2+3(h_{112}^{2^*})^2\\
&=\dfrac{5}{2}\bigl\{(\lambda_1^2+\lambda^2)\langle X, e_1\rangle^2+(\lambda_1^2+\lambda^2)\langle X, e_2\rangle^2\bigl\}\\
&=\dfrac{5}{2}(\lambda_1^2+\lambda^2)|X|^2=\dfrac54|\nabla^{\perp} \vec H|^2.
\end{aligned}
\end{equation}
From the Ricci identity (2.8), we have
$$
h^{2^*}_{1122}-h^{2^*}_{2211}=3\lambda K,
$$
$$
h^{1^*}_{1112}-h^{1^*}_{1121}=-3\lambda K,
$$
$$
h^{2^*}_{1112}-h^{2^*}_{1121}=3\lambda_{1}K,
$$
$$
h^{2^*}_{2212}-h^{2^*}_{2221}=-3\lambda_{1}K.
$$
Thus, we obtain
\begin{equation}
\begin{aligned}
&\sum_{i,j,k,l,p}(h_{ijkl}^{p^{\ast}})^{2}\\
=&4(h_{1122}^{1^{\ast}})^{2}+6(h_{2211}^{1^{\ast}})^{2}+6(h_{1122}^{2^{\ast}})^{2}+4(h_{2211}^{2^{\ast}})^{2}
+4(h_{2222}^{1^{\ast}})^{2}+4(h_{1111}^{2^{\ast}})^{2}\\
&+(h_{1111}^{1^{\ast}})^{2}+(h_{2222}^{2^{\ast}})^{2}+(h_{1112}^{1^{\ast}})^{2}+(h_{2221}^{2^{\ast}})^{2}\\
=&2(h_{1122}^{1^{\ast}}-h_{2211}^{1^{\ast}})^{2}+2(h_{1122}^{1^{\ast}}+h_{2211}^{1^{\ast}})^{2}\\
&+2(h_{1122}^{2^{\ast}}-h_{2211}^{2^{\ast}})^{2}+2(h_{1122}^{2^{\ast}}+h_{2211}^{2^{\ast}})^{2}\\
&+\frac{1}{2}(h_{2222}^{1^{\ast}}-h_{2221}^{2^{\ast}})^{2}+\frac{1}{2}(h_{2222}^{1^{\ast}}+h_{2221}^{2^{\ast}})^{2}\\
&+\dfrac12(h_{1121}^{1^{\ast}}+h_{1112}^{1^{\ast}})^{2}+\dfrac12(h_{1121}^{1^{\ast}}-h_{1112}^{1^{\ast}})^{2}\\
&+\dfrac12(h_{1111}^{1^{\ast}}+h_{2211}^{1^{\ast}})^{2}+\dfrac12(h_{1111}^{1^{\ast}}-h_{2211}^{1^{\ast}})^{2}\\
&+\frac{1}{2}(h_{2222}^{2^{\ast}}-h_{1122}^{2^{\ast}})^{2}+\frac{1}{2}(h_{2222}^{2^{\ast}}+h_{1122}^{2^{\ast}})^{2}\\
&+\dfrac12(h_{2211}^{1^{\ast}}-h_{2222}^{1^{\ast}})^{2}+\dfrac12(h_{2211}^{1^{\ast}}+h_{2222}^{1^{\ast}})^{2}\\
&+(h_{1122}^{2^{\ast}})^2+3(h_{1111}^{2^{\ast}})^2+2(h_{2222}^{1^{\ast}})^{2} \\
&\geq  18\lambda_1^2K^2+\dfrac92\lambda_1^2K^2+18\lambda^2K^2+\dfrac92\lambda^2K^2\\
&= \dfrac{45}2(\lambda_1^2+\lambda^2)K^2\\
&=\dfrac{45}{32}S^3
\end{aligned}
\end{equation}
because  of $S=4(\lambda_1^2+\lambda^2)$ and $K=-\dfrac12S$. \newline
On the other hand, since $S$ is constant and $H=0$ at  $p$, from (2.35) in Lemma 2.3 and (4.42), we obtain, at  $p$,
\begin{equation}
\aligned
\frac{1}{2}\mathcal{L}\sum_{i,j,k,p}(h_{ijk}^{p^{\ast}})^{2}
=-\dfrac{3S}2|\nabla^{\perp} {\vec H}|^{2}=-\dfrac{3}{5}S^2(3S-2).
\endaligned
\end{equation}
According to  $h_{11k}^{p^*}+h_{22k}^{p^*}=H^{p^*}_{,k}$ and $H^{p^*}=0$ for $p, k=1, 2$, by a direct calculation, we have
 $$
 \sum_{i,j,k,l,p,q}h_{il}^{p^{\ast}}h_{ijk}^{p^{\ast}}h_{jl}^{q^{\ast}}H_{,k}^{q^{\ast}}=\dfrac{S}4|\nabla^{\perp} {\vec H}|^{2}.
 $$
From  (2.34) in Lemma 2.3, we get
\begin{equation*}
\aligned
&\frac{1}{2}\mathcal{L}\sum_{i,j,k,p}(h_{ijk}^{p^{\ast}})^{2}\\
&=\sum_{i,j,k,l,p}(h_{ijkl}^{p^{\ast}})^{2}-(5S-2)\sum_{i,j,k,p}(h_{ijk}^{p^{\ast}})^{2}+\dfrac{5S}2|\nabla^{\perp} {\vec H}|^{2}\\
 &\ \  -\sum_{i,j,k,l,p,q}h_{il}^{p^{\ast}}h_{ijk}^{p^{\ast}}h_{jl}^{q^{\ast}}H_{,k}^{q^{\ast}}\\
&=\sum_{i,j,k,l,p}(h_{ijkl}^{p^{\ast}})^{2}-(5S-2)\sum_{i,j,k,p}(h_{ijk}^{p^{\ast}})^{2}
+\dfrac{9S}{4}|\nabla^{\perp} {\vec H}|^{2}\\
&=\sum_{i,j,k,l,p}(h_{ijkl}^{p^{\ast}})^{2}-(5S-2)\dfrac{S(3S-2)}2+\dfrac9{10}S^2(3S-2).\\
\endaligned
\end{equation*}
Thus, we have from (4.44)
\begin{equation*}
\aligned
\sum_{i,j,k,l,p}(h_{ijkl}^{p^{\ast}})^{2}-(5S-2)\dfrac{S(3S-2)}2+\dfrac9{10}S^2(3S-2)=-\dfrac{3}{5}S^2(3S-2),
\endaligned
\end{equation*}
namely,
\begin{equation*}
\aligned
\sum_{i,j,k,l,p}(h_{ijkl}^{p^{\ast}})^{2}=S(S-1)(3S-2),
\endaligned
\end{equation*}
which is in contradiction to (4.43) for  $S\equiv 2$ or  $S\equiv 1$. Hence, we conclude  that  $H\neq 0$ on $M^2$.
\end{proof}

\begin{proposition}
Let $X: M^2\to \mathbb{R}^{4}$ be a 2-dimensional complete  Lagrangian self-shrinker  in $\mathbb R^4$.
If  the squared norm $S$ of the second fundamental form satisfies $S\equiv 1$ or $S\equiv 2$, then
$\sup H^2=S$.
\end{proposition}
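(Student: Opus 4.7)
The plan is to apply Lemma 4.1 to a maximizing sequence for $H^2$ and then to carry out a case analysis parallel to the proof of Proposition 4.1, now specialized to $S=1$ and $S=2$. Since Theorem 4.3 gives $\vec H\neq 0$ on $M^2$, the adapted Lagrangian frame with $e_{1^{\ast}}=\vec H/|\vec H|$ is available globally.

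First I would invoke Lemma 4.1 to extract $\{p_m\}\subset M^2$ along which $H^2(p_m)\to\sup H^2=:\bar H^2$, with componentwise limits $\bar h^{p^{\ast}}_{ij}$ and $\bar h^{p^{\ast}}_{ijk}$, and $\bar\lambda:=\bar h^{1^{\ast}}_{12}=0$. Thus $\bar h^{1^{\ast}}_{ij}=\bar\lambda_i\delta_{ij}$, $\bar H=\bar\lambda_1+\bar\lambda_2$, and $S=\bar\lambda_1^2+3\bar\lambda_2^2$, so the elementary identity $S-\bar H^2=2\bar\lambda_2(\bar\lambda_2-\bar\lambda_1)$ reduces the goal $\sup H^2=S$ to showing $\bar\lambda_2=0$ or $\bar\lambda_1=\bar\lambda_2$. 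I would then split on $(\bar\lambda_1,\bar\lambda_2)$. The case $\bar\lambda_2=0$ gives the conclusion immediately. The case $\bar\lambda_1=0$, $\bar\lambda_2\neq 0$ reproduces Case 2 of the proof of Proposition 4.1 and forces $\bar h^{p^{\ast}}_{ijk}=0$ except possibly for $\bar h^{2^{\ast}}_{222}$, whence $\lim|\nabla^{\perp}\vec H|^2(p_m)=\sum(\bar h^{p^{\ast}}_{ijk})^2$; substituting into (2.26) together with $\limsup\mathcal{L}H^2(p_m)\leq 0$ yields $(\bar H^2-S)(\bar H^2-3S+2)\leq 0$, and with $\bar H^2=S/3$ this forces $S\leq 3/4$, contradicting $S\in\{1,2\}$.

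It remains to handle $\bar\lambda_1\bar\lambda_2\neq 0$. Mimicking the analysis around (4.23) in Proposition 4.1, either $\bar\lambda_1=3\bar\lambda_2$ or $\bar\lambda_1\neq 3\bar\lambda_2$. In the first sub-case $\bar H^2=\tfrac43 S$ and (2.25) gives $\sum(\bar h^{p^{\ast}}_{ijk})^2=(\tfrac56 S-1)S$; for $S=1$ this is $-\tfrac16<0$, absurd, and for $S=2$ I would replay Proposition 4.2, computing $\tfrac12\lim\mathcal{L}\sum(h^{p^{\ast}}_{ijk})^2$ in two ways: via (2.35) one obtains $S^2(\tfrac23-\tfrac{17}{27}S)=-\tfrac{64}{27}<0$ at $S=2$, in conflict with the non-negative expression produced by (2.34). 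In the second sub-case the resulting linear system forces $\sum(\bar h^{p^{\ast}}_{ijk})^2=0$, and (2.26) gives $(\bar H^2-S)(\bar H^2-3S+2)\leq 0$. For $S=1$ this collapses to $(\bar H^2-1)^2\leq 0$, so $\bar H^2=1=S$. For $S=2$ it gives $2\leq\bar H^2\leq\tfrac83$, and setting $\sum(\bar h^{p^{\ast}}_{ijk})^2=0$ in (2.25) at $S=2$, isolating $\sqrt{(4S-3\bar H^2)\bar H^2}$ and squaring, produces the quartic $3u^4-20u^3+60u^2-96u+64=0$ in $u=\bar H^2$, which factors as $(u-2)^2(3u^2-8u+16)=0$; since the quadratic factor has discriminant $-128<0$, the unique real root is $u=2$, so $\bar H^2=S$. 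In either value of $S$, combining $\bar H^2=S$ with $\bar\lambda_1\bar\lambda_2\neq 0$ yields $\bar\lambda_1=\bar\lambda_2$, closing the argument.

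The principal obstacle is the last sub-case at $S=2$: the function-theoretic estimates of Lemmas 4.2--4.5, which drive Proposition 4.3, degenerate at the endpoint $S=2$ (for instance, $f_1(S)=0$ rather than the strict positivity used there). I would therefore replace those monotonicity arguments by the direct quartic factorization above, which isolates $\bar H^2=2$ without boundary ambiguity. Once that algebraic identification is in place, all cases funnel into $\bar H^2=S$, i.e., $\sup H^2=S$.
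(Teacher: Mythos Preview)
Your proposal is correct and follows the paper's structure closely: invoke Lemma~4.1 to get $\bar\lambda=0$ along a maximizing sequence, then split on $(\bar\lambda_1,\bar\lambda_2)$ exactly as in the proof of Proposition~4.1, with Proposition~4.2 recycled to kill the sub-case $\bar\lambda_1=3\bar\lambda_2$ at $S=2$. The case analyses for $\bar\lambda_1=0$, $\bar\lambda_2=0$, and $\bar\lambda_1=3\bar\lambda_2$ match the paper's.

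The one place you diverge is the sub-case $S=2$, $\bar\lambda_1\bar\lambda_2\neq 0$, $\bar\lambda_1\neq 3\bar\lambda_2$. The paper observes directly that when $S=2$ the term $S(1-\tfrac12 S)$ in (2.25) vanishes, so with $\sum(\bar h^{p^{\ast}}_{ijk})^2=0$ the identity collapses to
\[
0=-(S-\bar H^2)^2-\tfrac12\bar H^2(\bar\lambda_1-\bar\lambda_2)^2,
\]
a sum of two nonpositive terms, forcing $\bar H^2=S$ and $\bar\lambda_1=\bar\lambda_2$ in one stroke. Your route---isolating the radical $\sqrt{(4S-3\bar H^2)\bar H^2}$, squaring, and factoring the resulting quartic $3u^4-20u^3+60u^2-96u+64=(u-2)^2(3u^2-8u+16)$---is also valid and correctly identifies $\bar H^2=2$ as the unique real root. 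The paper's completion-of-squares is shorter and yields $\bar\lambda_1=\bar\lambda_2$ simultaneously, whereas your argument deduces that afterward from $S-\bar H^2=2\bar\lambda_2(\bar\lambda_2-\bar\lambda_1)$; but both reach the same conclusion and your remark that the monotonicity estimates of Lemmas~4.2--4.5 degenerate at $S=2$ correctly explains why a direct algebraic identification is needed there.
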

\begin{proof}
\noindent
In terms of Lemma 4.1,  there exists a sequence $\{p_m\}$ in $M^2$ such that
\begin{equation*}
\lim_{m\rightarrow\infty} H^2(p_m)=\sup H^2, \ \
\lim_{m\rightarrow\infty} |\nabla H^2(p_m)|=0, \ \
\limsup_{m\rightarrow\infty}\mathcal{L} H^2(p_m)\leq 0
\end{equation*}
\vskip2mm
\noindent
and
$$
\bar \lambda=0, \  \   \bar h^{1^{*}}_{ij}=\bar \lambda_i\delta_{ij}.
$$
\newline
(1) Case for $S\equiv 2$.
By making use of the same assertion as in the proof of Proposition 4.1, we have
 for $ k=1, 2$,
 \begin{equation}
\begin{cases}
&\bar h^{1^*}_{11k}+\bar h^{1^*}_{22k}=0,\\
&\bar h^{2^*}_{11k}+\bar h^{2^*}_{22k}=0,\\
&(\bar \lambda_1-3\bar \lambda_2)\bar h^{1^*}_{11k}=0
\end{cases}
\end{equation}
with $\bar\lambda_1\bar\lambda_2\neq 0$.
\vskip1mm
\noindent
If $\bar \lambda_1=3\bar \lambda_2$, we get
$$
\lim_{m\rightarrow\infty}H^2(p_m)=\bar H^2=(\bar \lambda_1+\bar \lambda_2)^2=16\bar \lambda_2^2=\dfrac{4S}3.
$$
By making use of the same assertion as in the proof of Proposition 4.2, we can know that  this is impossible.

\noindent
Thus, we get   $\bar \lambda_1\neq 3\bar \lambda_2$. In this case, we obtain
$\bar h^{p^{\ast}}_{ijk}=0$  for any $i, j, k, p$ from (4.45).  Hence, we have
from (2.25) in Lemma 2.1,
\begin{equation}
\begin{aligned}
&0=S(1-\dfrac32S)+2\bar H^2S-\dfrac12\bar H^4-\bar H^2(\bar \lambda_1^2+\bar\lambda_2^2)\\
&=-(S-\bar H^2)^2-\dfrac12\bar H^2(\bar \lambda_1-\bar\lambda_2)^2.
\end{aligned}
\end{equation}
We conclude
$$
\sup H^2=\bar H^2=S=2.
$$
\vskip2mm
\noindent
(2) Case for $S\equiv 1$.
Since $S=1$, we have $\sup H^2>0$. From $\lim_{m\rightarrow\infty} |\nabla H^2(p_m)|=0$ and $|\nabla H^2|^2=4\sum_i(\sum_{p^*}H^{p^{\ast}}H^{p^{\ast}}_{,i})^2$, we get
$$
\bar H^{1^*}_{,i}=0.
$$
From (4.7) and (4.10), we have
$$
\bar \lambda_i\lim_{m\rightarrow\infty} \langle X,e_i \rangle(p_m)=0.
$$
Next, we  take  the following three cases into consideration.

\vskip2mm
\noindent
(a)  If  $\bar \lambda_1=0$,
in this case, $\bar \lambda_2\neq0$, $3\bar H^2=S=1$. Since $\bar H^{1^*}_{,i}=0$ and $S=1$, we get
\begin{equation*}
\bar h_{11k}^{1^{*}}+\bar h_{22k}^{1^{*}}=0,\ \ \bar h_{22k}^{1^{*}}=0,\ \ k=1,2.
\end{equation*}
Therefore,
\begin{equation*}
\bar h_{111}^{1^{*}}=\bar h_{112}^{1^{*}}=\bar h_{122}^{1^{*}}=\bar h_{222}^{1^{*}}=0
\end{equation*}
and
\begin{equation*}
|\nabla^{\perp} \overrightarrow{\bar H}|^2=\sum_{i,j,k,p}(\bar h_{ijk}^{p^{*}})^2.
\end{equation*}
From $\limsup_{m\rightarrow\infty}\mathcal{L} |H|^2(p_m)\leq 0$ and (4.8), we obtain
\begin{equation*}
\frac{1}{2}(\bar H^2-1)^2\leq0,
\end{equation*}
it means that, $\bar H^2=1$. It is a contradiction.
\vskip2mm
\noindent
(b)  If $\bar \lambda_2=0$, in this case, $\bar \lambda_1\neq0$, $\sup H^2=\bar H^2=S=1$.
\vskip2mm
\noindent
(c) If $\bar \lambda_1\bar \lambda_2\neq0$, in this case,
 for $ k=1, 2$,
 \begin{equation*}
\begin{cases}
&\bar h^{1^*}_{11k}+\bar h^{1^*}_{22k}=0,\\
&\bar h^{2^*}_{11k}+\bar h^{2^*}_{22k}=0,\\
&(\bar \lambda_1-3\bar \lambda_2)\bar h^{1^*}_{11k}=0.
\end{cases}
\end{equation*}
\vskip2mm
\noindent
If  $\bar \lambda_1\neq3\bar \lambda_2$,  from the above equations, we know
\begin{equation*}
\sum_{i,j,k,p}(\bar h_{ijk}^{p^{*}})^2=0,\ \ i, j, k, p=1, 2.
\end{equation*}
From (4.8), we get
\begin{equation*}
0\geq\frac{1}{2}(\bar H^2-1)^2.
\end{equation*}
Hence,  we have
\begin{equation*}
\sup \bar H^2=1=S.
\end{equation*}
\vskip2mm
\noindent
If  $\bar \lambda_1=3\bar \lambda_2$, we have  $\bar H^2=\frac{4}{3}S=\frac{4}{3}$
and $1=S=\bar \lambda_1^2+3\bar \lambda_2^2=12\bar \lambda_2^2$. From (2.25), we get
\begin{equation*}
\sum_{i,j,k,p}(\bar h_{ijk}^{p^{*}})^2=-\frac{1}{6}<0.
\end{equation*}
It is impossible.
From the above arguments, we conclude that, for $S=2$ or $S=1$,
\begin{equation*}
\sup H^2=S.
\end{equation*}
\end{proof}
\vskip2mm
\noindent
\begin{theorem}
Let $X: M^2\to \mathbb{R}^{4}$ be a 2-dimensional complete  Lagrangian self-shrinker  in $\mathbb R^4$.
If  the squared norm $S$ of the second fundamental form satisfies $S\equiv 1$ or $S\equiv 2$, then
$H^2=S$ is constant.
\end{theorem}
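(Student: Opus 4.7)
The plan is to promote Proposition 4.4's conclusion $\sup H^2 = S$ to $H^2 \equiv S$ by running the same circle of ideas at the infimum. Assuming for contradiction that $\inf H^2 < S$, I would apply the generalized maximum principle (Lemma 2.6) to the bounded function $-H^2$: this produces a sequence $\{q_m\} \subset M$ with $H^2(q_m) \to \bar H^2 := \inf H^2 < S$, $|\nabla H^2|(q_m) \to 0$, and $\liminf_m \mathcal{L} H^2(q_m) \geq 0$. Since $S$ is constant, the components $h^{p^{\ast}}_{ij}$ and their first and second covariant derivatives are bounded along $\{q_m\}$, so after passing to a subsequence I obtain limits $\bar h^{p^{\ast}}_{ij}, \bar h^{p^{\ast}}_{ijk}, \bar h^{p^{\ast}}_{ijkl}$ at which the Gauss, Codazzi and Ricci identities and the identity coming from $\mathcal{L} S = 0$ (Lemma 2.1) all still hold.

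The case analysis then mirrors Lemma 4.1 and Proposition 4.1 at the supremum, with the sign of $\mathcal{L} H^2$ reversed. If $\bar H = 0$, the limit setup matches that of Theorem 4.3, and the same algebraic derivation (using Lemmas 2.1, 2.3 and the Ricci-identity bound in Lemma 2.4) simultaneously yields $\sum (\bar h^{p^{\ast}}_{ijkl})^2 = S(S-1)(3S-2) = 0$ and $\sum (\bar h^{p^{\ast}}_{ijkl})^2 \geq \tfrac{45}{32} S^3 > 0$ for $S \in \{1,2\}$, a contradiction. Otherwise $\bar H > 0$: at each $q_m$ I choose a frame with $\vec H = H e_{1^{\ast}}$ and, as in Lemma 4.1, rotate further so that $\bar\lambda = 0$ in the limit. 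The subcases $\bar\lambda_2 = 0$ and $\bar\lambda_1 = 3\bar\lambda_2$ force $\bar H^2 = S$ and $\bar H^2 = 4S/3$, both contradicting $\bar H^2 < S$. The subcase $\bar\lambda_1 \bar\lambda_2 \neq 0$ with $\bar\lambda_1 \neq 3\bar\lambda_2$ gives $\sum (\bar h^{p^{\ast}}_{ijk})^2 = 0$, and the algebraic analysis of Lemma 2.1's identity as in Lemmas 4.2 and 4.3 rules out any $\bar H^2 \in (0, S)$.

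The main obstacle is the remaining subcase $\bar\lambda_1 = 0$, $\bar\lambda_2 \neq 0$, which gives $\bar H^2 = S/3$ and for which $\liminf \mathcal{L} H^2 \geq 0$ is automatic. To dispose of it, I would use $\bar H^{p^{\ast}}_{,k} = 0$ together with $\nabla_k S = 0$ to pin down $\bar h^{p^{\ast}}_{ijk} = 0$ except for the single totally-symmetric component $\bar e := \bar h^{2^{\ast}}_{222}$, and Lemma 2.1's identity then forces $\bar e^2 = S(S-1)$. For $S = 1$, $\bar e = 0$, so $\sum (\bar h^{p^{\ast}}_{ijk})^2 = 0$ in the limit; Lemma 2.5's explicit second formula, evaluated at $\bar H^2 = 1/3$, $\bar K = -1/3$, $\bar\lambda_2^2 = 1/3$, gives $\tfrac12 \mathcal{L} \sum (h^{p^{\ast}}_{ijk})^2 = -\tfrac{8}{27}$, which contradicts the non-negativity $\sum (\bar h^{p^{\ast}}_{ijkl})^2 \geq 0$ supplied by the first formula of Lemma 2.5. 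For $S = 2$, $\bar e \neq 0$; instead, I would differentiate the pointwise identity $\sum (h^{p^{\ast}}_{ijk})^2 = \tfrac12 H^4 - 2 H^2 S + S(\tfrac32 S - 1) + T$ (equivalent to $\mathcal{L} S = 0$), pass to the limit using $\nabla H^2 \to 0$ and a short computation showing $\nabla T \to 0$, and deduce $\bar h^{2^{\ast}}_{222\, l} = 0$ for $l = 1, 2$; this conflicts with the Ricci-identity value $\bar h^{1^{\ast}}_{2222} - \bar h^{2^{\ast}}_{2221} = 3\bar\lambda_2 \bar K$ combined with the value of $\bar h^{1^{\ast}}_{2222}$ imposed by $\nabla_k \nabla_l S = 0$. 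This last subcase is where I expect the technical heart of the argument to lie.
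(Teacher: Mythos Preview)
Your plan—apply the generalized maximum principle to $-H^2$ and rerun the supremum analysis at the infimum—is exactly the paper's strategy, and your handling of the end-game subcase $\bar\lambda_1=0$ (the $-\tfrac{8}{27}$ contradiction for $S=1$, the differentiated identity for $S=2$) is close to what the paper does. But one step is a genuine gap.

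You write that after fixing $e_{1^*}=\vec H/H$ you ``rotate further so that $\bar\lambda=0$ in the limit,'' invoking Lemma~4.1. Neither half of this works. In the Lagrangian frame $e_{i^*}=Je_i$, so fixing $e_{1^*}$ already fixes $e_1$ (and hence $e_2$ up to sign); $\lambda=h_{12}^{1^*}$ is an invariant of this adapted frame, not a gauge parameter you can rotate away. And Lemma~4.1 does not choose $\bar\lambda=0$ but \emph{proves} it, with each of its three cases driven by the inequality from $\limsup\mathcal L H^2\le0$ at the supremum (for example, Case~1 forces $\bar H^2\ge S$, contradicting $S=\bar H^2+4\bar\lambda^2$). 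At the infimum that inequality reverses to $\bar H^2\le S$, which is automatic, so none of Lemma~4.1's contradictions survive. The paper therefore does \emph{not} first reduce to $\bar\lambda=0$; it treats $\bar\lambda\ne0$ head-on, splitting into $\bar\lambda_1\bar\lambda_2\ne\bar\lambda^2$ (disposed of by showing the quartic $f(t)=t^4-2St^3-6S(S-1)t^2+2S(2-3S)^2t-(2-3S)^2S^2$ is negative on $(0,S)$, which uses $f(S)=f'(S)=0$ precisely when $S\in\{1,2\}$) and $\bar\lambda_1\bar\lambda_2=\bar\lambda^2$. The latter, for $S=2$ with $\bar\lambda\ne0$, requires solving an explicit $5\times5$ linear system for the $\bar h_{ijk}^{p^*}$ and then a page-long fourth-order computation in the $\bar h_{ijkl}^{p^*}$; this is the technical heart of the proof and is entirely absent from your sketch. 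Separately, for the subcase $\bar\lambda=0$, $\bar\lambda_1\bar\lambda_2\ne0$, $\bar\lambda_1\ne3\bar\lambda_2$ with $S=1$, the paper does not invoke anything like Lemmas~4.2--4.3 (which operate on $(S,\tfrac43S)$ and use $1<S<2$) but instead runs a direct fourth-derivative computation, equations (4.57)--(4.62).
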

\begin{proof}
We can apply the generalized maximum principle for $\mathcal L$-operator
to the function $-H^2$. Thus, there exists a sequence $\{p_m\}$ in $M^2$ such that
\begin{equation*}
\lim_{m\rightarrow\infty} H^2(p_m)=\inf H^2, \ \
\lim_{m\rightarrow\infty} |\nabla H^2(p_m)|=0, \ \
\liminf_{m\rightarrow\infty}\mathcal{L} H^2(p_m)\geq 0.
\end{equation*}
By making use of the similar  assertion as in the proof of Lemma 4.1, we have
\begin{equation}
\begin{cases}
\begin{aligned}
&\lim_{m\rightarrow\infty} H^2(p_m)=\inf H^2=\bar H^2,\quad
\lim_{m\rightarrow\infty} |\nabla H^2(p_m)|=0,\\
&
\lim_{m\rightarrow\infty} |\nabla ^{\perp}\vec H|^2(p_m)-\sum_{i,j,k,p}(\bar h^{p^{\ast}}_{ijk})^2+\dfrac12(\bar H^2-S)(\bar H^2-3S+2)\geq 0.
\end{aligned}
\end{cases}
\end{equation}
\vskip2mm
\noindent
By taking  the   limit and making use of the same assertion as in Theorem  4.3, we  can prove
$\inf H^2\neq 0$.
Hence, without loss of the generality,
at each point $p_m$, we choose $e_1$, $e_2$  such that
$$
\vec H=H^{1^{\ast}}e_{1^{\ast}}
$$
and we can assume
$$
\lim_{m\rightarrow\infty}h^{p^{\ast}}_{ijl}(p_m)=\bar h^{p^{\ast}}_{ijl}, \quad \lim_{m\rightarrow\infty}h^{p^{\ast}}_{ij}(p_m)=\bar h^{p^{\ast}}_{ij},
\quad \lim_{m\rightarrow\infty}h^{p^{\ast}}_{ijkl}(p_m)=\bar h^{p^{\ast}}_{ijkl},
$$
for $i, j, k, l,p=1, 2$.
From $\lim_{k\rightarrow\infty} |\nabla H^2(p_m)|=0$ and $|\nabla H^2|^2=4\sum_i(\sum_{p^*}H^{p^{\ast}}H^{p^{\ast}}_{,i})^2$, we have
\begin{equation}
\bar H^{1^*}_{,k}=0.
\end{equation}
From (4.7) and (4.48), we obtain
\begin{equation}
\begin{cases}
\begin{aligned}
&\bar \lambda_1\lim_{m\rightarrow\infty} \langle X,e_1\rangle(p_m)+\bar \lambda \lim_{m\rightarrow\infty} \langle X,e_2 \rangle(p_m)=0,\\
&\bar \lambda\lim_{m\rightarrow\infty} \langle X,e_1\rangle(p_m)
+\bar \lambda_2 \lim_{m\rightarrow\infty} \langle X,e_2 \rangle(p_m)=0.
\end{aligned}
\end{cases}
\end{equation}
If $\bar \lambda_1\bar \lambda_2\neq \bar \lambda^2$ and $\bar \lambda\neq0$,  we get
$$
\lim_{m\rightarrow\infty} \langle X,e_1\rangle(p_m)=
\lim_{m\rightarrow\infty} \langle X,e_2 \rangle(p_m)=0.
$$
Thus, we know,  for  $k=1, 2$,
\begin{equation}
\begin{cases}
\begin{aligned}
&\bar h^{1^*}_{11k}+\bar h^{1^*}_{22k}=0, \\
&\bar h^{2^*}_{11k}+\bar h^{2^*}_{22k}=0,\\
&\bar \lambda_1\bar h^{1^{\ast}}_{11k}+3\bar \lambda \bar h^{1^{\ast}}_{12k}
+3\bar \lambda_2 \bar h^{2^{\ast}}_{12k}-\bar \lambda\bar h^{2^{\ast}}_{22k}=0.
\end{aligned}
\end{cases}
\end{equation}
We  conclude,  for any $i, j, k, p$,
$$
\bar h^{p^*}_{ijk}=0.
$$

\noindent From  (4.47) and (2.25) in Lemma 2.1, we have

\begin{equation}
\begin{aligned}
&S\geq \inf H^2= \bar H^2,\\
&S(1-\dfrac12S)-(S-\bar H^2)^2+\dfrac12\bar  H^4-\bar H^2(\bar \lambda_1^2+\bar\lambda_2^2+2\bar \lambda^2)=0.
\end{aligned}
\end{equation}

\noindent From Lemma 2.5 and taking limit,
\begin{equation}
\aligned
0&\leq\sum_{i,j,k,l,p}(\bar h_{ijkl}^{p^{\ast}})^{2}
=\frac{1}{2}\lim_{m\rightarrow\infty} \mathcal{L}\sum_{i,j,k,p}( h_{ijk}^{p^{\ast}})^{2}(p_m)\\
&=\bar H^{2}\bigl[\bar H^{2}-2S+\frac{1}{2}\bar H^{4}-\bar K\bar H^{2}-\bar K^{2}\bigl]-\bar \lambda^2\bar H^{4}\\
&\ \ +\bar H^{2}(S+2-\frac{3}{2}\bar H^{2}-\bar \lambda_{1}^{2}
-\bar \lambda_{2}^{2}-2\bar \lambda^2)(\bar \lambda_{1}^{2}+\bar \lambda_{2}^{2}+2\bar\lambda^2)\\
&<\bar H^{2}\bigl[\bar H^{2}-2S+\frac{1}{2}\bar H^{4}-\bar K\bar H^{2}-\bar K^{2}\bigl]\\
&\ \ +\bar H^{2}(S+2-\frac{3}{2}\bar H^{2}-\bar \lambda_{1}^{2}
-\bar \lambda_{2}^{2}-2\bar \lambda^2)(\bar \lambda_{1}^{2}+\bar \lambda_{2}^{2}+2\bar\lambda^2).
\endaligned
\end{equation}
According to (4.51), we have
\begin{equation*}
\aligned
&\bar H^{2}\bigl[\bar H^{2}-2S+\frac{1}{2}\bar H^{4}-\bar K\bar H^{2}-\bar K^{2}\bigl]\\
&\ \ + \biggl(S+2-\frac{3}{2}\bar H^{2}-\dfrac{1}{\bar  H^2}\bigl(S(1-\dfrac12S)-(S-\bar H^2)^2+\dfrac12\bar  H^4\bigl)\biggl)\\
&\ \ \times\biggl(S(1-\dfrac12S)-(S-\bar H^2)^2+\dfrac12\bar  H^4\biggl)\\
&=\dfrac{1}{4\bar  H^2}\biggl(\bar  H^8-2S\bar H^6-6S(S-1)\bar  H^4+2S(2-3S)^2\bar  H^2-(2-3S)^2S^2\biggl).
\endaligned
\end{equation*}
We consider a function $f(t)$ defined by
\begin{equation}
\aligned
&f(t)=t^4-2St^3-6S(S-1)t^2+2S(2-3S)^2t-(2-3S)^2S^2,
\endaligned
\end{equation}
for $0<t\leq S$. Thus, we get
\begin{equation}
f^{'}(t)=4t^3-6St^2-12S(S-1)t+2S(2-3S)^2,\ \
f^{''}(t)=12(t^2-St-S(S-1)),
\end{equation}
$f^{''}(t)<0$ for  $t\in (0, S)$.  Hence, $f^{'}(t)$ is a decreasing function
for  $t\in (0, S)$. Since  $f^{'}(S)=4S(S-1)(S-2)=0$,
  $f(t)$ is a increasing function
for $t\in (0, S)$. According to
\begin{equation}
f(S)=2(S-1)(S-2)S^2=0,
\end{equation}
we conclude $f(t)<0$ for $t\in (0, S)$. This  is a contradiction.

\noindent Hence, we have
$\bar \lambda_1\bar \lambda_2\neq 0$ and $\bar \lambda=0$.
In this case, we get
 for $ k=1, 2$,
 \begin{equation}
\begin{cases}
&\bar h^{1^*}_{11k}+\bar h^{1^*}_{22k}=0\\
&\bar h^{2^*}_{11k}+\bar h^{2^*}_{22k}=0\\
&(\bar \lambda_1-3\bar \lambda_2)\bar h^{1^*}_{11k}=0.
\end{cases}
\end{equation}
\vskip1mm
\noindent
If $\bar \lambda_1=3\bar \lambda_2$, we obtain
$$
\inf H^2=\lim_{m\rightarrow\infty}H^2(p_m)=(\bar \lambda_1+\bar \lambda_2)^2=16\bar \lambda_2^2=\dfrac{4S}3,
$$
which  is impossible from Proposition 4.4.
Thus, we get   $\bar \lambda_1\neq 3\bar \lambda_2$. In this case, we have
$$
\bar h^{p^{\ast}}_{ijk}=0
$$
for any $i, j, k, p$ from (4.56).
\vskip1mm
\noindent
From   (2.19), we know
\begin{equation*}
\bar H_{,ij}^{p^{*}}=\bar h_{ij}^{p^{*}}-\sum_k\bar h_{ik}^{p^{*}}\bar h_{jk}^{1^{*}}\bar H
\end{equation*}
because of $H^{1^*}=H$ and $H^{2^*}=0$.  Thus, we  get
\begin{equation}\label{eq:1-31-1}
\begin{aligned}
&\bar h_{1111}^{1^{*}}+\bar h_{2211}^{1^{*}}=\bar \lambda_1-\bar \lambda_1^2\bar H,\\
&\bar h_{1122}^{1^{*}}+\bar h_{2222}^{1^{*}}=\bar \lambda_2-\bar \lambda_2^2\bar H,\\
&\bar h_{1112}^{1^{*}}+\bar h_{2212}^{1^{*}}=\bar h_{1121}^{1^{*}}+\bar h_{2221}^{1^{*}}=0,\\
&\bar h_{1122}^{2^{*}}+\bar h_{2222}^{2^{*}}=0,\\
&\bar h_{1121}^{2^{*}}+\bar h_{2221}^{2^{*}}=\bar \lambda_2-\bar \lambda_1\bar \lambda_2\bar H.
\end{aligned}
\end{equation}
From Ricci identities (2.8), we obtain
\begin{equation}\label{eq:1-31-7}
\begin{aligned}
&\bar h_{1122}^{2^{*}}=\bar h_{2211}^{2^{*}}, \ \bar h_{1112}^{1^{*}}=\bar h_{1121}^{1^{*}},\\
&\bar h_{1112}^{2^{*}}-\bar h_{1121}^{2^{*}}=(\bar \lambda_1-2\bar \lambda_2)\bar K, \\
& \bar h_{2212}^{2^{*}}-\bar h_{2221}^{2^{*}}=3\bar \lambda_2\bar K.
\end{aligned}
\end{equation}
On the other hand, since  $S$ is constant, we know, for $k,l=1,2$,
\begin{equation}\label{eq:1-31-2}
0=-\sum_{i,j,p}\bar h_{ijl}^{p^{*}}\bar h_{ijk}^{p^{*}}=\sum_{i,j,p}\bar h_{ij}^{p^{*}}\bar h_{ijkl}^{p^{*}}
=\bar h_{11}^{1^{*}}\bar h_{11kl}^{1^{*}}+3\bar h_{12}^{2^{*}}\bar h_{12kl}^{2^{*}}=\bar\lambda_1\bar h_{11kl}^{1^{*}}+3\bar \lambda_2\bar h_{22kl}^{1^{*}}.
\end{equation}
From \eqref{eq:1-31-1} and \eqref{eq:1-31-2}, we have
\begin{equation}\label{eq:1-31-6}
\begin{aligned}
&(\bar \lambda_1-3\bar \lambda_2)\bar h_{2211}^{1^{*}}=\bar \lambda_1^2-\bar \lambda_1^3\bar H,\\
&(\bar \lambda_1-3\bar \lambda_2)\bar h_{2222}^{1^{*}}=\bar \lambda_1\bar \lambda_2-\bar \lambda_1\bar \lambda_2^2\bar H,\\
&(\bar \lambda_1-3\bar \lambda_2)\bar h_{1122}^{1^{*}}=-3\bar \lambda_2^2+3\bar \lambda_2^3\bar H\\
&(\bar \lambda_1-3\bar \lambda_2)\bar h_{2212}^{1^{*}}=0.
\end{aligned}
\end{equation}
Hence, we conclude from \eqref{eq:1-31-1} and \eqref{eq:1-31-6}
\begin{equation*}
\bar h_{2212}^{1^{*}}=\bar h_{1112}^{1^{*}}=0
\end{equation*}
because of  $\bar \lambda_1\neq3\bar \lambda_2$.
According to   \eqref{eq:1-31-6}, we obtain
\begin{equation}
(\bar \lambda_1-3\bar \lambda_2)(\bar h_{1112}^{2^{*}}-\bar h_{1121}^{2^{*}})
=-S+(3\bar \lambda_2^3+\bar \lambda_1^3)\bar H
\end{equation}
because of $S=\bar \lambda_1^2+3\bar \lambda_2^2$. \newline
For the case $S\equiv 1$, from \eqref{eq:1-31-7}, we know
\begin{equation}
\begin{aligned}
&(\bar \lambda_1-3\bar \lambda_2)(\bar h_{1112}^{2^{*}}-\bar h_{1121}^{2^{*}})\\
&=(\bar \lambda_1-3\bar \lambda_2)(\bar \lambda_1-2\bar \lambda_2)\bar K\\
&=(1+3\bar \lambda_2^2-5\bar \lambda_1\bar \lambda_2)(\bar \lambda_1\bar \lambda_2-\bar \lambda_2^2)\\
&=\bar \lambda_1\bar \lambda_2-\bar \lambda_2^2+8\bar \lambda_1\bar \lambda_2^3
-3\bar \lambda_2^4-5\bar \lambda_1^2\bar \lambda_2^2\\
&=\bar \lambda_1\bar \lambda_2-2\bar \lambda_2^2+8\bar \lambda_1\bar \lambda_2^3
-4\bar \lambda_1^2\bar \lambda_2^2.\\
\end{aligned}
\end{equation}
By a direct calculation and by  using  $S=\bar \lambda_1^2+3\bar \lambda_2^2=1$, we get
\begin{equation*}
-1+(3\bar \lambda_2^3+\bar \lambda_1^3)\bar H-\bigl\{\bar \lambda_1\bar \lambda_2-\bar \lambda_2^2
+8\bar \lambda_1\bar \lambda_2^3
-3\bar \lambda_2^4-5\bar \lambda_1^2\bar \lambda_2^2\bigl\}
=-8\bar \lambda_1\bar \lambda_2^3\neq 0.
\end{equation*}
From (4.61) and (4.62), it is impossible.\newline
For  the case $S\equiv 2$,  we have from (2.25) in Lemma 2.1,
\begin{equation}
\begin{aligned}
&0=S(1-\dfrac32S)+2\bar H^2S-\dfrac12\bar H^4-\bar H^2(\bar \lambda_1^2+\bar\lambda_2^2)\\
&=-(S-\bar H^2)^2-\dfrac12\bar H^2(\bar \lambda_1-\bar\lambda_2)^2.
\end{aligned}
\end{equation}
We conclude from Proposition 4.4
$$
\inf H^2=\bar H^2=S=\sup H^2.
$$
Thus, we know that $H^2=S$ is constant. \newline
From now on, we consider the case $\bar \lambda_1\bar \lambda_2=\bar \lambda^2$.
In this case, we have
$$S=\bar \lambda_1^2+3\bar \lambda_2^2+4\bar \lambda^2
=(\bar \lambda_1+\bar \lambda_2)(\bar \lambda_1+3\bar \lambda_2)=\bar H(\bar \lambda_1+3\bar \lambda_2).
$$
If $S\equiv 1$, from (2.25), we get
\begin{equation*}
\sum_{i,j,k,p}(\bar h^{p^{\ast}}_{ijk})^2=\dfrac{1}{2}(\bar H^2-1)(3\bar H^2-1)\geq 0.
\end{equation*}
Hence, either $\bar H^2\geq 1$, or $\bar H^2\leq \dfrac{1}{3}$.
If $\bar H^2\geq 1$, then we have $ H^2\equiv 1=S$ since  $\inf H^2=\bar H^2\leq \sup H^2=1$ in view of Proposition 4.4.
According to $S=\bar \lambda_1^2+3\bar \lambda_2^2+4\bar \lambda^2$ and  $H^2=\bar \lambda_1^2+\bar \lambda_2^2+2\bar \lambda^2$, we know $\bar \lambda=0$ and $\bar \lambda_2=0$. \newline
If $\bar H^2\leq \dfrac{1}{3}$, from $S=\bar H(\bar \lambda_1+3\bar \lambda_2)=1$,
we obtain  $(\bar \lambda_1+3\bar \lambda_2)^2\geq 3$, which  implies
$\bar \lambda_1=\bar \lambda=0$ because of  $(\bar \lambda_1+3\bar \lambda_2)^2=
\bar \lambda_1^2+9\bar \lambda_2^2+6\bar \lambda^2\leq 3\bar \lambda_1^2+9\bar \lambda_2^2+12\bar \lambda^2=3S=3$.
Hence, we have $\inf H^2=\bar\lambda_2^2=\dfrac{S}3\neq 0$,
$$
\bar H^{1^*}_{,k}=0,  \ \ \bar H^{2^*}_{,1}=0
$$
because of  $H^{p^{\ast}}_{,i}=\sum_k h^{p^{\ast}}_{ik} \langle X,e_k \rangle$.
Hence, we have, by using the same calculations as in (4.6),
\begin{equation}
\begin{cases}
&\bar h^{1^*}_{11k}+\bar h^{1^*}_{22k}=0\\
&\bar h^{2^*}_{111}+\bar h^{2^*}_{221}=0\\
&3\bar \lambda_2\bar h^{1^*}_{22k}=0.
\end{cases}
\end{equation}
Hence, we get
$$
\bar h^{p^{\ast}}_{ijk}=0, \ \  \text{\rm except} \ \ i=j=k=p^{\ast}=2.
$$
If  $\bar h^{2^{\ast}}_{222}\neq 0$,  since $\bar \lambda_2\neq0$, $3\bar H^2=3\bar \lambda_2^2=S=1$, we have
\begin{equation*}
0=\sum_{i,j,k,p}(\bar h_{ijk}^{p^{*}})^2+S(1-\frac{3}{2}S)+2\bar H^{2}S-\frac{1}{2}\bar H^{4}
-\bar H^{2}\bar \lambda_2^2=\sum_{i,j,k,p}(\bar h_{ijk}^{p^{*}})^2>0.
\end{equation*}
It is impossible.
Hence, we know
$$
\bar h^{p^{\ast}}_{ijk}=0,
$$
for any $ i, j, k, p$.
From (2.19), we get
\begin{equation*}
\bar H_{,ij}^{p^{*}}=\bar h_{ij}^{p^{*}}-\sum_k\bar h_{ik}^{p^{*}}\bar h_{jk}^{1^{*}}\bar H.
\end{equation*}
We obtain
\begin{equation*}
\bar h_{1121}^{2^{*}}+\bar h_{2221}^{2^{*}}=\bar \lambda_2\neq 0.
\end{equation*}
From (2.34) of Lemma 2.3, we have
\begin{equation}
\begin{aligned}
&\frac{1}{2}\mathcal{L}\sum_{i,j,k,p}(\bar h_{ijk}^{p^{\ast}})^{2}=\sum_{i,j,k,l,p}(\bar h_{ijkl}^{p^{\ast}})^{2}>0.\\
\end{aligned}
\end{equation}
From (2.35) of Lemma 2.3, we get
\begin{equation*}
\aligned
&\frac{1}{2}\mathcal{L}\sum_{i,j,k,p}(\bar h_{ijk}^{p^{\ast}})^{2}\\
=&(\bar H^{2}-2S)\bar H^{2}+(3\bar K+2-\bar H^{2}+2S)\sum_{i,j}\bar H^2\bar h_{ij}^{1^{\ast}} \bar h_{ij}^{1^{\ast}}\\
&-\bar K(\bar H^{4}+\bar H^3\bar h_{11}^{1^{\ast}})
-\sum_{i,j,k,l,p}\bar H^2\bar h_{jk}^{1^{\ast}}\bar h_{jk}^{p^{\ast}}\bar h_{il}^{p^{\ast}}\bar h_{il}^{1^{\ast}}
-\sum_{i,j,k}\bar H^3 \bar h_{ik}^{1^{\ast}}\bar h_{ji}^{1^{\ast}}\bar h_{jk}^{1^{\ast}}\\
=&(\bar H^{2}-2S)\bar H^{2}+(3\bar K+2-\bar H^{2}+2S)\sum_{i,j}\bar H^2\bar \lambda_2^2
-\bar K\bar H^{4}-\bar H^2\bar \lambda_2^4-\bar H^3 \bar \lambda_2^3 \\
=&\bar H^6-3\bar H^4=-\frac{8}{27},
\endaligned
\end{equation*}
which is in contradiction to  (4.65).
Hence, we get  $\inf H^2=S$, that is,  $H^2=S$ is constant from Proposition 4.4.  \newline
For the case $S\equiv 2$,  first of all, we will prove $\bar\lambda=0$. If not, we have $S=2$ and $\bar \lambda_1\bar \lambda_2=\bar \lambda^2\neq0$.
By making use of the same assertion as in the proof of Theorem 4.3,
we have
\begin{equation}
\begin{pmatrix}
&- \bar \lambda & 3 \bar \lambda_2 &3\bar  \lambda&  \bar \lambda_1 &0\\
&0&-\bar \lambda & 3\bar \lambda_2 &3 \bar \lambda& \bar \lambda_1 \\
&0&0&1&0&1\\
&0&1&0&1&0\\
&1&0&1&0&0
\end{pmatrix}
\begin{pmatrix}
\bar h_{222}^{2^*}\\
\bar h_{222}^{1^*}\\
\bar h_{122}^{1^*}\\
\bar h_{112}^{1^*}\\
\bar h_{111}^{1^*}
\end{pmatrix}
=
\begin{pmatrix}
0\\
0\\
0\\
0\\
A
\end{pmatrix},
\end{equation}
where
\begin{equation}
\begin{aligned}
A=&\bar \lambda_2 \lim_{m\rightarrow\infty} \langle X,e_1 \rangle(p_m)
-\bar \lambda \lim_{m\rightarrow\infty} \langle X,e_2 \rangle(p_m)\\
=&\bar H\lim_{m\rightarrow\infty} \langle X,e_1 \rangle(p_m)\\
=&-\dfrac{H\bar \lambda}{\bar \lambda_1}\lim_{m\rightarrow\infty} \langle X,e_2 \rangle(p_m).
\end{aligned}
\end{equation}
Solving this system of linear equations,
we have
\begin{equation}
\begin{aligned}
\mu \bar h_{222}^{2^*}=&\bigl\{12\bar\lambda^2+(\bar \lambda_1-3\bar \lambda_2)^2\bigl\}A , \\
\bar h_{222}^{1^*}=&-\bar h_{211}^{1^*},\\
\bar h_{221}^{1^*}=&-\bar h_{111}^{1^*}, \\
\mu\bar h_{211}^{1^*}=&\bar \lambda(\bar \lambda_1-3\bar \lambda_2)A, \\
\mu\bar h_{111}^{1^*}=&-4\bar \lambda^2A \\
\end{aligned}
\end{equation}
with $\mu=16\bar\lambda^2+(\bar \lambda_1-3\bar \lambda_2)^2$.
\begin{equation}
\begin{aligned}
\lim_{m\rightarrow\infty}|\nabla^{\perp} \vec H|^2&=\sum_{i,p}(\bar H^{p^*}_{,i})^2=(\bar h_{112}^{2^*}+\bar h_{222}^{2^*})^2=A^2
\end{aligned}
\end{equation}
\begin{equation}
\begin{aligned}
&\sum_{i,j,k,p}(\bar h^{p^*}_{ijk})^2=\sum_p(\bar h_{111}^{p^*})^2+\sum_p(\bar h_{222}^{p^*})^2+3\sum_p(\bar h_{221}^{p^*})^2
+3\sum_p(\bar h_{112}^{p^*})^2\\
&=\sum_{p}(\bar h_{11p}^{1^*})^2+(\bar h_{222}^{1^*})^2+(\bar h_{222}^{2^*})^2
+3\sum_p(\bar h_{22p}^{1^*})^2+3(\bar h_{112}^{1^*})^2+3(\bar h_{112}^{2^*})^2\\
&=\lim_{m\rightarrow\infty}|\nabla^{\perp} \vec H|^2.
\end{aligned}
\end{equation}
Since  $S= 2$ and $\bar \lambda_1\bar \lambda_2=\bar \lambda^2\neq0$, we obtain
$$
S=\bar \lambda_1^2+3\bar \lambda_2^2+4\bar \lambda_3^2=\bar H(\bar \lambda_1+3\bar \lambda_2)=2,
$$
From (2.25), we have
\begin{equation}
\sum_{i,j,k,p}(\bar h^{p^{\ast}}_{ijk})^2=4-4\bar H^2+\frac{3}{2}\bar H^4=(2-H^2)^2+\dfrac12\bar H^4>0.
\end{equation}
Since  $S=2$ is constant, we get, for $k,l=1,2$,
\begin{equation*}
-\sum_{i,j,p}\bar h_{ijl}^{p^{*}}\bar h_{ijk}^{p^{*}}=\sum_{i,j,p}\bar h_{ij}^{p^{*}}\bar h_{ijkl}^{p^{*}}
=\bar\lambda_1\bar h_{11kl}^{1^{*}}
+3\bar\lambda\bar h_{12kl}^{1^{*}}+3\bar \lambda_2\bar h_{22kl}^{1^{*}}-\bar \lambda\bar h_{22kl}^{2^{*}},
\end{equation*}
namely,
\begin{equation}
\begin{aligned}
&\bar \lambda_1 \bar h_{1122}^{1^{*}}+3\bar \lambda \bar h_{1222}^{1^{*}}+3\bar \lambda_2 \bar h_{2222}^{1^{*}}
-\bar \lambda \bar h_{2222}^{2^{*}}\\
&=-(\bar h_{112}^{1^{*}})^2-3(\bar h_{112}^{2^{*}})^2-3(\bar h_{212}^{2^{*}})^2-(\bar h_{222}^{2^{*}})^2\\
&=-\dfrac{12\bar\lambda^2+(\bar \lambda_1-3\bar \lambda_2)^2}{\mu}A^2,\\
&\bar \lambda_1\bar h_{1112}^{1^{*}}+3\bar \lambda\bar h_{1212}^{1^{*}}
+3\bar \lambda_2 \bar h_{2212}^{1^{*}}-\bar \lambda \bar h_{2212}^{2^{*}}\\
&=-\bar h_{111}^{1^{*}}\bar h_{112}^{1^{*}}-3\bar h_{111}^{2^{*}}\bar h_{112}^{2^{*}}
-3\bar h_{121}^{2^{*}}\bar h_{122}^{2^{*}}-\bar h_{221}^{2^{*}}\bar h_{222}^{2^{*}}\\
&=\dfrac{\bar\lambda(\bar\lambda_1-3\bar\lambda_2)}{\mu}A^2.\\
\end{aligned}
\end{equation}
From (2.19) and taking limit,  we know, for $\ i, j, p=1, 2$
\begin{equation*}
\bar h_{11ij}^{p^{*}}+\bar h_{22ij}^{p^{*}}=\sum_k\bar h_{ikj}^{p^{*}}\lim_{m\rightarrow\infty} \langle X,e_k \rangle(p_m)
+\bar h_{ij}^{p^{*}}-\sum_k\bar h_{ik}^{p^{*}}\bar h_{jk}^{1^{*}}\bar H,
\end{equation*}
it means that,
\begin{equation*}
\begin{aligned}
& \bar h_{1122}^{1^{*}}+\bar h_{2222}^{1^{*}}=
\sum_k\bar h_{2k2}^{1^{*}}\lim_{m\rightarrow\infty} \langle X,e_k \rangle(p_m) +
\bar \lambda_2 -(\bar \lambda_2^2+\bar \lambda^2) \bar H, \\
&\bar h_{1112}^{1^{*}}+\bar h_{2212}^{1^{*}}=
\sum_k\bar h_{1k2}^{1^{*}}\lim_{m\rightarrow\infty} \langle X,e_k \rangle(p_m)+\bar \lambda -\bar \lambda\bar H^2, \\
& \bar h_{1122}^{2^{*}}+\bar h_{2222}^{2^{*}}=\sum_k\bar h_{2k2}^{2^{*}}\lim_{m\rightarrow\infty} \langle X,e_k \rangle(p_m)-\bar \lambda, \\
\end{aligned}
\end{equation*}
Since, from (4.49) and (4.67) and $S=\bar H(\bar \lambda_1+3\bar \lambda_2)=2$,
$$
\sum_k\bar h_{2k2}^{1^{*}}\lim_{m\rightarrow\infty} \langle X,e_k \rangle(p_m)=\dfrac{\bar\lambda_1A^2}{\mu},
$$
$$
\sum_k\bar h_{2k2}^{2^{*}}\lim_{m\rightarrow\infty} \langle X,e_k \rangle(p_m)=
\dfrac{3\bar \lambda}{\mu }A^2-
\dfrac{\bar \lambda_1^2+3\bar\lambda^2}{2 \bar\lambda}A^2,
$$
$$
\sum_k\bar h_{1k2}^{1^{*}}\lim_{m\rightarrow\infty} \langle X,e_k \rangle(p_m)
=-\dfrac{3\bar\lambda}{\mu}A^2,
$$
we obtain
\begin{equation}
\begin{cases}
\begin{aligned}
&\bar \lambda_1 \bar h_{1122}^{1^{*}}+3\bar \lambda \bar h_{1222}^{1^{*}}+3\bar \lambda_2 \bar h_{2222}^{1^{*}}
-\bar \lambda \bar h_{2222}^{2^{*}}=-\dfrac{12\bar\lambda^2+(\bar \lambda_1-3\bar \lambda_2)^2}{\mu}A^2,\\
&\bar \lambda_1\bar h_{1112}^{1^{*}}+3\bar \lambda\bar h_{1212}^{1^{*}}
+3\bar \lambda_2 \bar h_{2212}^{1^{*}}-\bar \lambda \bar h_{2212}^{2^{*}}
=\dfrac{\bar\lambda(\bar\lambda_1-3\bar\lambda_2)}{\mu}A^2.\\
& \bar h_{1122}^{1^{*}}+\bar h_{2222}^{1^{*}}=
\dfrac{\bar\lambda_1A^2}{\mu}+
\bar \lambda_2 -(\bar \lambda_2^2+\bar \lambda^2) \bar H, \\
&\bar h_{1112}^{1^{*}}+\bar h_{2212}^{1^{*}}=-
\dfrac{3\bar\lambda}{\mu}A^2+\bar \lambda -\bar \lambda\bar H^2, \\
& \bar h_{1122}^{2^{*}}+\bar h_{2222}^{2^{*}}=\dfrac{3\bar \lambda}{\mu }A^2-
\dfrac{\bar \lambda_1^2+3\bar\lambda^2}{2 \bar\lambda}A^2-\bar \lambda. \\
\end{aligned}
\end{cases}
\end{equation}
In view of rom (4.73), we know
\begin{equation*}
\begin{aligned}
& 4\bar \lambda\bar h_{2222}^{2^{*}}+(\bar \lambda_1-3\bar \lambda_2)\bar h_{2222}^{1^{*}}\\
=&\bigl\{\dfrac{15\bar\lambda^2+2\bar \lambda_1^2+9\bar \lambda_2^2}{\mu}
-\dfrac{3(\bar \lambda_1^2+3\bar\lambda^2)}{2}\bigl\}A^2
-2\bar \lambda^2-\bar \lambda^2\bar H^2\\
&  (\bar \lambda_1-3\bar \lambda_2)\bar h_{2222}^{2^{*}}-4\bar \lambda\bar h_{2222}^{1^{*}}\\
=&\bigl\{\dfrac{4\bar\lambda(\bar\lambda_1-3\bar\lambda_2)}{\mu}
-\dfrac{(\bar \lambda_1+3\bar\lambda_2)(\bar \lambda_1-3\bar\lambda_2)\bar\lambda_1}{2\bar \lambda}\bigl\}A^2
+2\bar \lambda\bar\lambda_2.\\
\end{aligned}
\end{equation*}
Thus, we have
\begin{equation*}
\begin{aligned}
&\bigl\{ -16\bar \lambda^2+(\bar \lambda_1-3\bar \lambda_2)^2\bigl\}\bar h_{2222}^{2^{*}}-8\bar\lambda(\bar \lambda_1-3\bar \lambda_2)\bar h_{2222}^{1^{*}}\\
=&-4\bar\lambda\bigl\{\dfrac{15\bar\lambda^2+2\bar \lambda_1^2+9\bar \lambda_2^2}{\mu}
-\dfrac{3(\bar \lambda_1^2+3\bar\lambda^2)}{2}\bigl\}A^2
+8\bar \lambda^3+4\bar \lambda^3\bar H^2\\
+&\bigl(\dfrac{4\bar\lambda}{\mu}
-\dfrac{(\bar \lambda_1+3\bar\lambda_2)\bar\lambda_1}{2\bar \lambda}\bigl)(\bar \lambda_1-3\bar\lambda_2)^2A^2
+2\bar \lambda\bar\lambda_2(\bar\lambda_1-3\bar\lambda_2)\\
&=\bigl\{-4\bar\lambda\dfrac{21\bar\lambda^2+\bar \lambda_1^2}{\mu}
+\dfrac{(\bar \lambda_1^2+3\bar\lambda^2)(18\bar\lambda^2-\bar\lambda_1^2-9\bar\lambda_2^2)}{2\bar\lambda}\bigl\}A^2
+10\bar \lambda^3-6\bar \lambda\bar \lambda_2^2+4\bar \lambda^3\bar H^2\\
\end{aligned}
\end{equation*}
Taking covariant differentiation of  (2.25) and using (4.47) and  (4.48), we obtain
\begin{equation*}
\begin{aligned}
0=&\bar h_{111}^{1^{*}} \bar h_{1112}^{1^{*}}+4\bar h_{111}^{2^{*}} \bar h_{1122}^{1^{*}}+6\bar h_{122}^{1^{*}}\bar h_{1222}^{1^{*}}
+4\bar h_{222}^{1^{*}}\bar h_{2222}^{1^{*}}+\bar h_{222}^{2^{*}}\bar h_{2222}^{2^{*}}\\
 &-\bar H^2
\bigl\{\bar \lambda (\bar h_{112}^{2^{*}}+\bar h_{222}^{2^{*}})
+\bar \lambda_1 \bar h_{111}^{2^{*}}+2\bar \lambda\bar h_{122}^{1^{*}}+\bar \lambda_2\bar h_{222}^{1^{*}}\bigl\}\\
&
\end{aligned}
\end{equation*}
 Since
 \begin{equation*}
\begin{aligned}
&\bar H^2 \bigl\{\bar \lambda (\bar h_{112}^{2^{*}}+\bar h_{222}^{2^{*}})
+\bar \lambda_1 \bar h_{111}^{2^{*}}+2\bar \lambda\bar h_{122}^{1^{*}}+\bar \lambda_2\bar h_{222}^{1^{*}}\bigl\}=\bar H^2 \dfrac{\bar \lambda A}{\mu}(2+\mu),
\end{aligned}
\end{equation*}
\begin{equation*}
 \begin{aligned}
&\bar h_{111}^{1^{*}} \bar h_{1112}^{1^{*}}+4\bar h_{111}^{2^{*}} \bar h_{1122}^{1^{*}}+6\bar h_{122}^{1^{*}}\bar h_{1222}^{1^{*}}
+4\bar h_{222}^{1^{*}}\bar h_{2222}^{1^{*}}+\bar h_{222}^{2^{*}}\bar h_{2222}^{2^{*}}\\
 &=\dfrac{A}{\mu}\biggl(\bigl\{ -16\bar \lambda^2+(\bar \lambda_1-3\bar \lambda_2)^2\bigl\}\bar h_{2222}^{2^{*}}-8\bar\lambda(\bar \lambda_1-3\bar \lambda_2)\bar h_{2222}^{1^{*}}\biggl)\\
 &+\dfrac{\bar\lambda A^3}{\mu}\bigl\{\dfrac{84\bar\lambda^2+4\lambda_1^2}{\mu}-14(\bar\lambda_1^2+3\bar\lambda^2)\bigl\}
 +\dfrac{4\bar\lambda A}{\mu}(-7\bar\lambda^2-3\lambda_2^2+3\bar\lambda_2^2\bar H^2),\\
&
\end{aligned}
\end{equation*}
we have
\begin{equation*}
 \begin{aligned}
&\bar H^2 \dfrac{\bar \lambda A}{\mu}(2+\mu)\\
 &=\dfrac{A}{\mu}\biggl(\bigl\{-4\bar\lambda\dfrac{21\bar\lambda^2+\bar \lambda_1^2}{\mu}
+\dfrac{(\bar \lambda_1^2+3\bar\lambda^2)(18\bar\lambda^2-\bar\lambda_1^2-9\bar\lambda_2^2)}{2\bar\lambda}\bigl\}A^2\\
&+10\bar \lambda^3-6\bar \lambda\bar \lambda_2^2+4\bar \lambda^3\bar H^2\biggl)\\
 &+\dfrac{\bar\lambda A^3}{\mu}\bigl\{\dfrac{84\bar\lambda^2+4\lambda_1^2}{\mu}-14(\bar\lambda_1^2+3\bar\lambda^2)\bigl\}
 +\dfrac{4\bar\lambda A}{\mu}(-7\bar\lambda^2-3\lambda_2^2+3\bar\lambda_2^2\bar H^2)\\
 &=\dfrac{ A^3}{\mu}\dfrac{(\bar \lambda_1^2+3\bar\lambda^2)(-10\bar\lambda^2-\bar\lambda_1^2-9\bar\lambda_2^2)}{2\bar\lambda}
 +\dfrac{\bar\lambda A}{\mu}(-18\bar\lambda^2-18\lambda_2^2+4(\bar\lambda^2+3\bar\lambda_2^2)\bar H^2),
\end{aligned}
\end{equation*}
which  is  impossible because of $2+\mu=2\bar\lambda_1^2+12\bar\lambda_2^2+14\bar\lambda^2$.
Hence, we have $\bar\lambda=0$, that is,   $\bar\lambda_1 \bar\lambda_2=0$. \newline
If $\bar \lambda_2=0$, we get $\inf H^2=S=\sup H^2$ from Proposition 4.4.  Namely, $H^2=S$ is constant.
\vskip1mm
\noindent
If $\bar \lambda_1=0$,  we have
$$
\bar \lambda_2\neq 0,  \ \ S=3\inf H^2, \ \
\bar H^{1*}_{,k}=0, \ k=1,2.
$$
Hence, we have, by using the same calculations as in (4.6),
\begin{equation}
\begin{cases}
&\bar h^{1^*}_{11k}+\bar h^{1^*}_{22k}=0\\
&\bar h^{2^*}_{111}+\bar h^{2^*}_{221}=0\\
&3\bar \lambda_2\bar h^{1^*}_{22k}=0.
\end{cases}
\end{equation}
Hence, we have
$$
\bar h^{p^{\ast}}_{ijk}=0, \ \ \text{\rm except} \ \ i=j=k=p^{\ast}=2.
$$
If $\bar h^{2^{\ast}}_{222}=0$, we get
$$
\bar h^{p^{\ast}}_{ijk}=0,
$$
for any $i, j, k, p$.  According to Lemma 2.1, we have
\begin{equation*}
0=\sum_{i,j,k,p} (h_{ijk}^{p^{\ast}})^{2}+S(1-\frac{3}{2}S)+2H^{2}S-\frac{1}{2}H^{4}
-\sum_{j,k,p,q}H^{p^{\ast}}h_{jk}^{p^{\ast}}H^{q^{\ast}}h_{jk}^{q^{\ast}}
=-2.
\end{equation*}
This is impossible.
\vskip1mm
\noindent
If  $\bar h^{2^{\ast}}_{222}\neq0$,  from Lemma 2.1, we obtain
$$
|\nabla^{\perp} \vec H|^2=\sum_{i,j,k,p}(\bar h^{p^{\ast}}_{ijk})^2=(\bar h^{2^{\ast}}_{222})^2=2.
$$
Since $S=\sum_{i,j,p}(h_{ij}^{p^{*}})^2$ is constant, we have
\begin{equation*}
\sum_{i,j,p}h_{ij}^{p^{*}}h_{ijk}^{p^{*}}=0, \ k=1, 2
\end{equation*}
and
\begin{equation*}
\sum_{i,j,p}h_{ijl}^{p^{*}}h_{ijk}^{p^{*}}+\sum_{i,j,p}h_{ij}^{p^{*}}h_{ijkl}^{p^{*}}=0, \ k, l=1, 2.
\end{equation*}
Then, for $ k, l=1, 2$,  we get
\begin{equation*}
\sum_{i,j,p}\bar h_{ijl}^{p^{*}}\bar h_{ijk}^{p^{*}}=-\sum_{i,j,p}\bar h_{ij}^{p^{*}}\bar h_{ijkl}^{p^{*}}
=-\bar h_{22}^{1^{*}}\bar h_{22kl}^{1^{*}}-2\bar h_{12}^{2^{*}}\bar h_{12kl}^{2^{*}}=-3\bar \lambda_2\bar h_{22kl}^{1^{*}}.
\end{equation*}
If $k=l=1$, we have
\begin{equation}\label{eq:1}
\bar h_{2211}^{1^{*}}=0.
\end{equation}
From (2.19), we know
\begin{equation*}
\bar H_{,ij}^{p^{*}}=\bar h_{ij}^{p^{*}}-\sum_k\bar h_{ik}^{p^{*}}\bar h_{jk}^{1^{*}}\bar H.
\end{equation*}
Let $p=i=2, j=1$, we get
\begin{equation*}
\bar h_{1121}^{2^{*}}+\bar h_{2221}^{2^{*}}=\bar \lambda_2.
\end{equation*}
From \eqref{eq:1}, we obtain
\begin{equation*}
\bar h_{2221}^{2^{*}}=\bar \lambda_2\neq0.
\end{equation*}
On the other hand, from Lemma 2.1, we have
\begin{equation*}
2\sum_{i,j,k,p}\bar h_{ijk}^{p^{*}}\bar h_{ijk1}^{p^{*}}=(\sum_{j,k,p,q}\bar H^{p^{\ast}}\bar h_{jk}^{p^{\ast}}\bar H^{q^{\ast}}\bar h_{jk}^{q^{\ast}})_{,1}=0
\end{equation*}
because  $\bar H_{,i}^{1^{\ast}}=0, h_{ij1}^{q^{\ast}}=0$.
Since
\begin{equation*}
2\sum_{i,j,k,p}\bar h_{ijk}^{p^{*}}\bar h_{ijk1}^{p^{*}}=2\bar h_{222}^{2^{*}}\bar h_{2221}^{2^{*}}=2\bar \lambda_2\bar h_{222}^{2^{*}} \neq0,
\end{equation*}
it is a contradiction. Thus, we know that $H^2=S$ is constant.
\end{proof}

\vskip3mm
\noindent
{\it Proof of Theorem 1.1}. From  Theorem 4.1 and Theorem 4.2, we know $S=0$, $S=1$ or $S=2$.
According to  the result of Cheng and Peng \cite{CP}, we only consider
the case $S\equiv 2$ and $S\equiv 1$.
Therefore, the mean curvature $H^2=S$ is constant from Theorem 4.4. \newline
If $H^2=S=2$, from (2.25) in Lemma 2.1, we have
$$
\sum_{i,j,k,p}(h^{p^{\ast}}_{ijk})^2\equiv 0,  \ \  \lambda_1=\lambda_2\neq 0.
$$
According to
$$
H^{p^*}_{,i}=\sum_{k}h_{ik}^{p^*}\langle X, e_k\rangle, \ \ \text{\rm for} \ \ i, p=1, 2,
$$
we know, at any point,
$$
0=h_{11i}^{1^*}+h_{22i}^{1^*}=H^{1^*}_{,i}=\lambda_i\langle X, e_i\rangle.
$$
Hence, we get $\langle X, e_i\rangle=0$ for $i=1, 2$ at any point. Thus,
$|X|^2$ is constant.  According to
$$
\dfrac12\mathcal{L} |X|^2=2-|X|^2
$$
we obtain
$$
|X|^2\equiv 2,
$$
it means that, $X: M^2\to \mathbb{R}^{4}$  becomes  a complete surface  in the sphere $S^3(\sqrt2)$.
Because  $S=2$,
it is easy to prove that  $X: M^2\to S^3(\sqrt2)$ is minimal and its Gaussian curvature is zero.
Thus, we conclude that
$X: M^2\to \mathbb{R}^{4}$ is the Clifford torus $S^1(1)\times S^1(1)$. \newline
If $H^2=S=1$, from (2.25) in Lemma 2.1, we have
$$
\sum_{i,j,k,p}(h^{p^{\ast}}_{ijk})^2\equiv 0,  \ \  \lambda=0, \ \ \lambda_2= 0.
$$
From the results of Yau in \cite{Y}, we know that $X:M^2\rightarrow \mathbb{R}^4$ is $S^1(1)\times \mathbb{R}^1$.
It completes the proof of Theorem 1.1.
\begin{flushright}
$\square$
\end{flushright}

\end{document}